\theoremstyle{plain}
\newtheorem{thm}{Theorem}[section]
\newtheorem*{thma}{Theorem A}
\newtheorem*{thmb}{Theorem B}
\newtheorem*{thmc}{Theorem C}
\newtheorem*{thmd}{Theorem D}
\newtheorem*{thme}{Theorem E}
\newtheorem*{thmf}{Theorem F}
\newtheorem*{thmg}{Theorem G}
\newtheorem{prop}[thm]{Proposition}
\newtheorem{lemma}[thm]{Lemma}
\newtheorem{cor}[thm]{Corollary}
\theoremstyle{definition}
\newtheorem{defn}[thm]{Definition}
\newtheorem*{defn*}{Definition}
\newtheorem*{question*}{Question}
\newtheorem{example}[thm]{Example}
\newtheorem*{example*}{Example}
\newtheorem{rem}[thm]{Remark}
\newtheorem*{rem*}{Remark}
\newcommand{\field}[1]{\mathbb{#1}}
\newcommand{\N}{\field{N}}
\newcommand{\ideal}[1]{\mathfrak{#1}}
\newcommand{\m}{\ideal{m}}
\newcommand{\n}{\ideal{n}}
\newcommand{\func}[1]{\mathrm{#1} \,}
\newcommand{\im}{\func{im}}
\newcommand{\bfx}{\mathbf{x}}
\newcommand{\ra}{\rightarrow}
\DeclareMathOperator{\len}{\lambda}
\DeclareMathOperator{\ann}{ann}
\DeclareMathOperator{\Hom}{Hom}
\newcommand{\be}{\begin{enumerate}}
\newcommand{\ee}{\end{enumerate}}
\newcommand{\cS}{\mathcal{S}}
\newcommand{\li}
 {\leftfootline}
\newcommand{\onto}{\twoheadrightarrow}
\newcommand{\into}{\hookrightarrow}
\newcommand{\cA}{\mathcal{A}}
\newcommand{\cM}{\mathcal{M}}
\newcommand{\cP}{\mathcal{P}}
\renewcommand{\phi}{\varphi}
\newcommand{\subsel}{submodule selector}
\newcommand{\dual}{\smallsmile}
\newcommand{\cl}{{\mathrm{cl}}}
\let\int\relax
\DeclareMathOperator{\int}{i}
\newcommand{\fg}{finitely generated}
\newcommand{\charp}{characteristic $p>0$}
\DeclareMathOperator{\tr}{tr}
\newcommand{\CM}{Cohen-Macaulay}
\DeclareMathOperator{\soc}{soc}
\newcommand{\tfae}{The following are equivalent:}
\DeclareMathOperator{\core}{-core}
\DeclareMathOperator{\hull}{-hull}
\newcommand{\clr}{complete Noetherian local ring}
\def\@settitle{\begin{center}%
  \baselineskip14\p@\relax
  \bfseries
  \uppercasenonmath\@title
  \@title
  \ifx\@subtitle\@empty\else
     \\[1ex]
     \@subtitle
  \fi
  \end{center}%
}
\def\subtitle#1{\gdef\@subtitle{#1}}
\def\@subtitle{}
\author{Neil Epstein}
\address{Department of Mathematical Sciences \\ George Mason University \\ Fairfax, VA  22030}
\email{nepstei2@gmu.edu}
\author{Rebecca R.G.}
\address{Department of Mathematical Sciences \\ George Mason University \\ Fairfax, VA  22030}
\email{rrebhuhn@gmu.edu}
\author{Janet Vassilev}
\address{Department of Mathematics and Statistics \\ University of New Mexico \\ Albuquerque, NM 87131}
\email{jvassil@math.unm.edu }
\title[Core-hull duality]{Nakayama closures, interior operations, and core-hull duality}
\subjclass[2020]{Primary: 13J10, Secondary: 13A35, 13B22, 13C60} 
\keywords{closure operation, test ideal, interior operation, Nakayama closure, tight closure, integral closure, Frobenius closure, core, Matlis duality}
\date{August 4, 2020}
\begin{document}
\begin{abstract}
Exploiting the interior-closure duality developed by Epstein and R.G. \cite{nmeRG-cidual}, we show that for the class of Matlis dualizable modules $\cM$ over a Noetherian local ring, when $\cl$ is a Nakayama closure and $\int$ its dual interior, there is a duality between $\cl$-reductions and $\int$-expansions that leads to a duality between the $\cl$-core of modules in $\cM$ and the $\int$-hull of modules in $\cM^\vee$.  We further show that many algebra and module closures and interiors are Nakayama and describe a method to compute the interior of ideals using closures and colons.  
We use our methods to give a unified proof of the equivalence of F-rationality with F-regularity, and of F-injectivity with F-purity, in the complete Gorenstein local case. Additionally, we give a new characterization of the finitistic tight closure test ideal in terms of 
maps from $R^{1/p^e}$.
Moreover, we show that the liftable integral spread of a module exists.
\end{abstract}

\maketitle

\setcounter{tocdepth}{1} 
\tableofcontents

\section{Introduction}\label{sec:intro}

The core of an ideal, attributed to Sally and Rees \cite{ReSa-core}, was initially studied in part because of its relationship to the Brian\c{c}on-Skoda Theorem. While the core is in general difficult to compute, Huneke and Swanson  gave a formula for computing the core of an $\m$-primary integrally closed ideal $I$ in a 2-dimensional regular local ring with infinite residue field \cite{HuSw-core}. Papers by Mohan \cite{Mo-coremod}, Corso, Polini, and Ulrich \cite{CPU-strcore,CPU-coreres,CPU-corpd1}, Hyry and Smith \cite{HySmi-cvgc,HySmi-Kawcore}, Huneke and Trung \cite{HT-core}, Polini and Ulrich \cite{PU-core}, and Fouli \cite{Fou-corchar}, and Fouli, Polini, and Ulrich \cite{FPU-corechar} further explored formulas for cores of ideals and modules.

Fouli and the third named author \cite{FoVa-core} extended the notion of core to Nakayama closures $\cl$, defining the  $\cl$-core of an ideal as the intersection of all $\cl$-reductions of the ideal. They compared the original core to tight closure and Frobenius closure cores, showing that $\text{core}(I) \subseteq *\core(I) \subseteq F\core(I)$. They also came up with conditions when the core and the $*\core$ are equal.  Fouli, Vassilev and Vraciu \cite{FoVaVr-*core} determined $*\core(I)=(J:I)J$ for a minimal reduction $J$ over a normal local ring of characteristic $p>0$ when certain colon conditions hold. 

They were also able to show under the same conditions that the $*\core$ of an ideal is an intersection of finitely many $*$-reductions of the ideal.

The first named author and Schwede introduced the tight interior in \cite{nmeSc-tint} as a dual to tight closure.  In \cite{nmeRG-cidual}, the first two named authors showed that this is an example of a more general duality, which assigns to each closure operation a corresponding interior operation.
The third named author noted in \cite{Va-inthull} that given an interior operation i, we can define the i-hull of an ideal in a way analogous but seemingly dual to the cl-core. 
In this paper we expand on this duality, ultimately proving that it arises from the closure-interior duality defined in \cite{nmeRG-cidual}.
More specifically, we have the following result: 

\begin{thma}
[Duality Theorem]
Let $\cl$ be a Nakayama closure on Noetherian $R$-modules, where $R$ is a complete Noetherian local ring. Then the dual interior operation to $\cl$, $\int$, is a Nakayama interior operation defined on Artinian $R$-modules (Proposition \ref{prop:nakayamaclosureint}). Let $A \subseteq B$ be Artinian $R$-modules.
\begin{enumerate}
    \item There exists an order-reversing correspondence between $\int$-expansions of $A \subseteq B$ and $\cl$-reductions of $(B/A)^\vee$ in $B^\vee$ (Theorem \ref{thm:expansiondualisreduction}).
    \item Every $\int$-expansion of $A$ in $B$ is contained in a maximal $\int$-expansion of $A$ in $B$ (Proposition \ref{prop:maxintexpansions}).
    \item The $\int$-hull of $A$ in $B$ exists and is dual to the $\cl$-core of $(B/A)^\vee$ in $B^\vee$ (Theorem \ref{thm:hullexists}).
\end{enumerate}
\end{thma}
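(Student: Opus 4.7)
The plan is to leverage the closure--interior duality of \cite{nmeRG-cidual} together with Matlis duality for the complete Noetherian local ring $R$, reducing each part of the theorem to the corresponding well-known (or straightforward dual) fact for the Nakayama closure $\cl$. The backbone is the order-reversing bijection, provided by Matlis duality, between submodules of the Artinian module $B$ and submodules of the Noetherian module $B^\vee$, sending $A \subseteq B$ to $A^\perp := (B/A)^\vee \subseteq B^\vee$. The dual interior is constructed in \cite{nmeRG-cidual} precisely so that $A^\int_B$ corresponds under this bijection to $(A^\perp)^\cl_{B^\vee}$; this identification is the single technical input that drives everything, so my first task would be to record it carefully and verify it survives under arbitrary sums and intersections.

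For part (1), unpacking the definitions, $A'$ with $A \subseteq A' \subseteq B$ is an $\int$-expansion of $A$ in $B$ exactly when $A^\int_B = (A')^\int_B$, which via the identification above is equivalent to $(A^\perp)^\cl_{B^\vee} = ((A')^\perp)^\cl_{B^\vee}$, i.e., $(A')^\perp$ is a $\cl$-reduction of $A^\perp$ in $B^\vee$; order-reversal is inherited from Matlis duality. For part (2), since $\cl$ is Nakayama, every $\cl$-reduction of $A^\perp$ in $B^\vee$ contains a minimal one (a standard application of the Nakayama lemma for closures, extracting a minimal generating set of the reduction modulo $\m$); transporting across the correspondence of (1) yields the maximal $\int$-expansions. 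For part (3), defining the $\int$-hull of $A$ in $B$ as the sum of all $\int$-expansions, Matlis duality interchanges sums of submodules of $B$ with intersections of submodules of $B^\vee$, so the correspondence of (1) sends the $\int$-hull to the intersection of all $\cl$-reductions of $A^\perp$, which is by definition the $\cl$-core.

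The main obstacle I anticipate is the existence assertion in (3): one must check that this sum is itself an $\int$-expansion of $A$ in $B$ (so that the hull is a genuine maximal expansion rather than only a supremum). Dually, this is the statement that the intersection of all $\cl$-reductions of $A^\perp$ in $B^\vee$ is itself a $\cl$-reduction, which is not automatic because submodules of a Noetherian module need not satisfy DCC. This is where the Nakayama property of $\int$ (Proposition \ref{prop:nakayamaclosureint}) does the real work: the Nakayama lemma for interiors lets one pass the interior through the ascending union of $\int$-expansions (and dually the closure through the descending intersection of $\cl$-reductions), combined with part (2) to cofinally reduce to maximal $\int$-expansions. Once this is in place, the three items of the theorem are the concatenation of the already-established results \ref{prop:nakayamaclosureint}, \ref{thm:expansiondualisreduction}, \ref{prop:maxintexpansions}, and \ref{thm:hullexists}.
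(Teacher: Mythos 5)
Your overall plan is correct and matches the paper's structure: Theorem~A is a summary of Proposition~\ref{prop:nakayamaclosureint}, Theorem~\ref{thm:expansiondualisreduction}, Proposition~\ref{prop:maxintexpansions}, and Theorem~\ref{thm:hullexists}, and your sketches of each are essentially the ones the paper gives. The ``single technical input'' you identify is the right one, namely the identity $\bigl(B/\int(A)\bigr)^{\vee} = \bigl((B/A)^{\vee}\bigr)^{\cl}_{B^{\vee}}$ inside $B^{\vee}$, which is exactly what the proof of Theorem~\ref{thm:expansiondualisreduction} establishes (one small notational slip: $\int$ is a submodule selector, so you should write $\int(A)$ rather than $A^{\int}_B$; the interior of $A$ does not depend on the ambient module $B$).

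The one place you go astray is the final paragraph, where you flag as the ``main obstacle'' the need to show that the sum of all $\int$-expansions is itself an $\int$-expansion. That is not claimed anywhere, and in general it is false, just as the $\cl$-core of an ideal is typically not itself a $\cl$-reduction. The $\int$-hull is \emph{defined} to be the sum $\sum_{\int(C)\subseteq A\subseteq C\subseteq B} C$, which automatically exists as a submodule of the Artinian module $B$. Theorem~\ref{thm:hullexists} only asserts that this sum is Matlis-dual to the $\cl$-core, and that follows immediately from the correspondence in~(1) together with the fact that Matlis duality interchanges arbitrary sums and intersections of submodules (Lemma~\ref{lemma:dualofintersectionissum}); no ascending-union or cofinality argument is needed. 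The Nakayama hypothesis is used in part~(2) to guarantee maximal $\int$-expansions exist (via minimal $\cl$-reductions on the dual side), but it plays no role in part~(3). So there is no obstacle there: once you drop the mistaken expectation that the hull must itself be an expansion, your argument is complete.
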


In order to do this, we define Nakayama interiors (Definition \ref{def:Nakayamainterior}) and prove that they are dual to Nakayama closures (see Section \ref{sec:nakayama}). We define i-expansions of an ideal, and prove that there are maximal i-expansions (see Section \ref{sec:expansions}). This involves a discussion of co-generation of modules (see Definition \ref{def:cog}) as originally discussed in \cite{Vam}.

The best known examples of Nakayama closures on ideals are Frobenius closure, tight closure, and integral closure \cite{nme*spread, nme-sp}. These closures all extend to the module setting, integral closure by means of liftable integral closure \cite{nmeUlr-lint}, and our results apply to all of these.
We give additional examples of Nakayama closures and interiors in Section \ref{sec:manyNAK}. For instance, under a large variety of circumstances, module and algebra closures are Nakayama:

\begin{thmb}
[See Theorems~\ref{thm:fgNak} and \ref{thm:algclosuresareNak}, and Corollary~\ref{cor:F+bigNak}
]
Let $(R,\m)$ be a Noetherian local ring.  Let $S$ be an $R$-algebra with $\m S$ contained in the Jacobson radical of $S$. \begin{enumerate}
    \item $\cl_S$ is a Nakayama closure on finite $R$-modules. (Theorem~\ref{thm:algclosuresareNak})
    \item If $S$ is local and $N$ a \fg\ $S$-module, then $\cl_N$ is a Nakayama closure on \fg\ $R$-modules. (Theorem~\ref{thm:fgNak})
    \item In particular, Frobenius closure (in \charp) and plus closure (when $R$ is complete) are Nakayama closures. (Corollary~\ref{cor:F+bigNak})
    \item If $B$ is a local big \CM\ $R$-algebra and $R \ra B$ is a local homomorphism, then $\cl_B$ is a Nakayama closure. (Corollary~\ref{cor:F+bigNak})
\end{enumerate}
\end{thmb}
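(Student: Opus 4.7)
My plan is to verify the Nakayama property uniformly across all four parts by realizing each closure as $\cl_S$ (or $\cl_N$) and transporting the Nakayama-closure hypothesis across the defining tensor construction into an application of the classical Nakayama Lemma on finitely generated $S$-modules. The central technical tool is the kernel description $L^{\cl_S}_M = \ker\bigl(M \to (M/L) \otimes_R S\bigr)$, valid for any inclusion $L \subseteq M$ of finite $R$-modules, together with the right-exactness of $-\otimes_R S$. For part~(1), I would unwind the Nakayama-closure hypothesis through this description: it reduces to the assertion that the finitely generated $S$-module $X := (M/L) \otimes_R S$ satisfies $X = \m S \cdot X$. Since $\m S \subseteq \ffunc{Jac}(S)$ by hypothesis, classical Nakayama in the category of $S$-modules forces $X = 0$, and unwinding the kernel description once more gives $L^{\cl_S}_M = M$, the desired identity.

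For part~(2), the same argument runs verbatim with $S$ replaced throughout by the finitely generated $S$-module $N$: one uses $L^{\cl_N}_M = \ker(M \to (M/L) \otimes_R N)$, observes that $(M/L) \otimes_R N$ is a finitely generated $S$-module on which $\m$ acts through $\m N \subseteq \m_S N$, and applies classical Nakayama for the local ring $(S, \m_S)$ in place of $(R, \m)$. For part~(3), Frobenius closure in positive characteristic is the algebra closure $\cl_{R_\infty}$, where $R_\infty$ is the perfect closure of $R$ (the direct limit of Frobenius on $R$), a quasi-local ring whose unique maximal ideal contains $\m R_\infty$ and equals $\ffunc{Jac}(R_\infty)$; plus closure is $\cl_{R^+}$, and when $R$ is complete local, $R^+$ is quasi-local, so $\m R^+ \subseteq \ffunc{Jac}(R^+)$. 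In both cases part~(1) applies. For part~(4), a local big \CM\ $R$-algebra $B$ with local structure map $R \to B$ has a unique maximal ideal $\m_B$ containing $\m B$, so $\m B \subseteq \ffunc{Jac}(B)$, and part~(1) again applies.

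The only delicate step I anticipate is the initial translation in~(1) from the abstract Nakayama-closure hypothesis to the $S$-module statement $X = \m S \cdot X$. This rests on a careful unwinding of the kernel description of $\cl_S$ together with right-exactness of $-\otimes_R S$, and formalizes the intuition that $\cl_S$ ``sees'' $\m$ on the $R$-side only through $\m S$ on the $S$-side. Once that reduction is in place, the classical Nakayama Lemma handles the remaining work in each of the four parts. A secondary point to verify in~(2) is that $(M/L) \otimes_R N$ is indeed finitely generated over $S$, which follows from the finite generation of $N$ over $S$ and of $M$ over $R$.
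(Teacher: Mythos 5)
Your central technical step contains a genuine error: you apply Nakayama's lemma to the wrong $S$-module, and as a result your stated conclusion is not what the Nakayama-closure property asserts.

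Recall the definition: $\cl$ is a Nakayama closure if $L \subseteq N \subseteq (L + \m N)^{\cl}_M$ implies $L^{\cl}_M = N^{\cl}_M$. By residuality one may reduce to $L=0$, so the goal becomes: if $N \subseteq (\m N)^{\cl_S}_M$, show $N \subseteq 0^{\cl_S}_M$. You instead conclude $L^{\cl_S}_M = M$, which is stronger and false in general (for instance, take $L = N$: the hypothesis is vacuous yet $L^{\cl_S}_M = M$ certainly need not hold). The underlying reason is that $X := (M/L) \otimes_R S$ is not the module to which Nakayama applies. The hypothesis does \emph{not} translate to $X = \m S\cdot X$; a short computation shows $X/\m S X \cong \bigl(M/(L + \m M)\bigr)\otimes_R S$, which is controlled by $\m M$ rather than by anything $\cl_S$ sees. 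The correct module is $H := \operatorname{im}\bigl(N\otimes_R S \to M\otimes_R S\bigr)$ (after the reduction to $L=0$). The hypothesis $N \subseteq (\m N)^{\cl_S}_M$ says precisely that each generator $1\otimes u$ of $H$ lies in the image of $\m N \otimes_R S$, hence $H \subseteq \m H = \m S\cdot H \subseteq \operatorname{Jac}(S)\cdot H$; and $H$ \emph{is} finitely generated over $S$, being a quotient of $N\otimes_R S$, which is finitely generated over $S$ because $N$ is finitely generated over $R$. Nakayama then gives $H=0$, i.e.\ $N \subseteq 0^{\cl_S}_M$, which is the desired conclusion. Parts (2)--(4) inherit the same error through your claim that the argument runs ``verbatim.''

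It is worth noting that once the error is fixed, your direct route is actually \emph{cleaner} than the paper's. The paper proves Theorem~\ref{thm:fgNak} by the image-module argument but appeals to Noetherianness of $S$ to conclude $H$ is finitely generated (treating $H$ as a submodule of $M\otimes_R B$); observing that $H$ is a quotient of the finitely generated $S$-module $L\otimes_R B$ renders that Noetherian hypothesis unnecessary. Likewise, Theorem~\ref{thm:algclosuresareNak} is proved in the paper by a detour through direct limits of essentially-of-finite-type local $R$-subalgebras (Lemma~\ref{lem:dirlim}, Proposition~\ref{prop:Naklimit}, Corollary~\ref{cor:localalgNak}) and localization at maximal ideals of $B$, whereas the corrected direct argument applies Nakayama (for a possibly non-Noetherian, non-local ring $B$, which is still valid for finitely generated modules over ideals inside the Jacobson radical) to $H$ and finishes immediately. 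So the idea you had is sound and in fact leads to a streamlined proof; the execution misidentified the module to which Nakayama should be applied, and misstated the conclusion of the Nakayama property.
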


In addition to the duality theorem above, we show that interiors of ideals can be computed using closures and colons, particularly the \emph{Artinistic version} of the interior (analogous to the finitistic test ideal):

\begin{thmc}[See Theorem \ref{thm:finintideal}]
Let $(R,\m,k)$ be a complete Noetherian local ring that is approximately Gorenstein and let $\alpha_f$ denote the finitistic version of a functorial submodule selector $\alpha$. If $\{J_t\}$ is a decreasing nested sequence of irreducible ideals cofinal with the powers of $\m$, then for any ideal $I$,
\[(\alpha_f)^\dual(I)=\bigcap_{t \ge 0}(J_t:(J_t:I)_R^{\cl}).\]
\end{thmc}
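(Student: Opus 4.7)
The plan is to exploit the approximately Gorenstein structure of $R$ to reduce the identity to a level-by-level computation in each Artinian Gorenstein quotient $R/J_t$, apply the closure-interior duality there, and then assemble the pieces by intersection.

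First, I would unwind the finitistic dual via the filtration of $E = E_R(k)$.  Approximate Gorensteinness together with the cofinality of $\{J_t\}$ with the $\m$-adic filtration gives $E = \varinjlim_t (R/J_t)^{\vee}$, where each $R/J_t$ is Artinian Gorenstein local and (non-canonically) $(R/J_t)^{\vee} \cong R/J_t$.  Under Matlis duality, the ideal $I \subseteq R$ corresponds to the Artinian submodule $\ann_E(I) \subseteq E$, whose level-$t$ piece is identified with $(J_t:I)/J_t \subseteq R/J_t$.  The \emph{finitistic} adjective on $\alpha_f$ is precisely what allows one to compute the dual on this Artinian submodule from its Noetherian (in fact Artinian) approximations $(J_t:I)/J_t$; combined with the completeness of $R$, this yields
\[
(\alpha_f)^{\dual}(I) \,=\, \bigcap_{t \ge 0} \pi_t^{-1}\!\left(L_t^{\int}\right),
\]
where $\pi_t : R \twoheadrightarrow R/J_t$ is the projection, $L_t := (I+J_t)/J_t \subseteq R/J_t$, and $\int := \cl^{\dual}$ is the dual interior.

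Second, at each level $t$, the Artinian Gorenstein ring $R/J_t$ is self-Matlis-dual, so the closure-interior duality of Epstein and R.G.\ collapses to a double-annihilator formula: for any submodule $K \subseteq R/J_t$,
\[
K^{\int}_{R/J_t} \,=\, \ann_{R/J_t}\!\left(\ann_{R/J_t}(K)^{\cl}_{R/J_t}\right).
\]
Applied to $K = L_t$, the inner annihilator is $\ann_{R/J_t}(L_t) = (J_t:I)/J_t$.  Invoking the compatibility $(H/J_t)^{\cl}_{R/J_t} = H^{\cl}_R / J_t$ for ideals $H \supseteq J_t$ (a consequence of the functoriality of $\cl$ under the quotient map), the inner closure becomes $(J_t:I)^{\cl}_R / J_t$, and the outer annihilator becomes $(J_t : (J_t:I)^{\cl}_R)/J_t$.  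Pulling back along $\pi_t$ yields the ideal $(J_t : (J_t:I)^{\cl}_R)$, and intersecting over $t$ gives precisely the right-hand side of the theorem.

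The main obstacle lies in the first step: establishing rigorously that $(\alpha_f)^{\dual}(I)$ is computed as the intersection over $t$ of the level-$t$ duals via the filtration of $E$.  This is where the \emph{finitistic} hypothesis on $\alpha$ is used essentially, in direct analogy with the classical identification of the finitistic tight closure test ideal as $\tau_{\mathrm{fg}}(R) = \bigcap_{M \text{ f.g.}} \ann_R(0^{*}_M)$, which in the approximately Gorenstein case reduces to $\bigcap_t \ann_R((J_t)^{*}/J_t) = \bigcap_t (J_t : J_t^{*})$ — that is, the $I = R$ special case of the formula to be proved.  A secondary but routine technical point is the quotient-compatibility of $\cl$ with the surjections $R \twoheadrightarrow R/J_t$; this follows from the functoriality of $\cl$ established earlier in the paper and holds in all the examples (tight, Frobenius, integral, plus, algebra and big \CM\ module closures) covered by Theorem~B.
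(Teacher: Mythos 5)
Your level-by-level reduction to $R/J_t$, the double-annihilator computation inside each Artinian Gorenstein quotient, and the pullback along $\pi_t$ are all correct and do produce the right-hand side of the identity. But the proof as written leaves the critical step unestablished, and you concede as much yourself: the equality
\[
(\alpha_f)^{\dual}(I) \;=\; \bigcap_{t \ge 0} \pi_t^{-1}\bigl(L_t^{\int}\bigr)
\]
is stated, not proved. This is not a minor technical point one can borrow by analogy; it is essentially the whole content of the theorem, and the analogy you invoke (the AG description of the finitistic test ideal) \emph{is} the $I=R$ case of the statement being proved, so the appeal is circular. The paper fills this gap in several stages: (i) it translates $(\alpha_f)^\dual(I)$ to $\ann_R\bigl((\ann_E I)^{\cl_f}_E\bigr)$ via Matlis duality; (ii) it rewrites the finitistic closure of $\ann_E I$ in $E$ as an intersection over all finitely generated submodules $M \subseteq E$, using Lemma~\ref{lem:altfg} (which itself relies on residuality and functoriality of $\cl$); (iii) it passes from f.g.\ submodules of $E$ to finite-length modules, and then obtains a \emph{containment} into the intersection over cyclic quotients $R/J$; and (iv) only in the approximately Gorenstein case does it close the loop, by showing that every f.g.\ submodule of $E$ sits inside some $N_t = \ann_E J_t \cong R/J_t$ and then invoking the fact that $\cl$ is order-preserving on ambient modules (so $(\ann_M I)^\cl_M \subseteq (\ann_{N_s} I)^\cl_{N_s}$ when $M \subseteq N_s$) to collapse the intersection down to the family $\{N_t\}$. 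Your outline reproduces steps (i) and (ii) only as slogans and omits (iii)--(iv) entirely, yet (iv) is precisely where the approximate Gorenstein hypothesis does its work.

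Two smaller corrections. The compatibility $(H/J_t)^{\cl}_{R/J_t} = H^{\cl}_R/J_t$ for ideals $H \supseteq J_t$ is a consequence of \emph{residuality} of $\cl$ (built into the hypothesis that $\cl$ arises from a preradical via $L^{\cl}_M/L = \alpha(M/L)$), not of functoriality; the two notions are distinct, and the paper uses both. Also, writing $\int := \cl^\dual$ is an abuse: the duality $\dual$ acts on submodule selectors, so the dual interior is $\alpha^\dual$ where $\alpha(M) = 0^\cl_M$, as in Theorem~\ref{thm:smsdual}.
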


We also show
that when $R$ is Gorenstein, if $R$  is $\cl$-rational then $R$ is $\cl$-regular (Corollary \ref{cor:irredseq}). Further, we use liftable integral closure \cite{nmeUlr-lint} to extend the notion of analytic spread to modules:

\begin{thmd}[See Theorem \ref{thm:lispread}]
Let $(R,\m,k)$ be a Noetherian local ring such that $k$ is infinite and $R$ is either of equal characteristic 0 or  reduced.  Then the liftable integral spread exists and agrees with the analytic spread in the sense of Eisenbud-Huneke-Ulrich.
\end{thmd}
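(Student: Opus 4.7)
The plan is to define the liftable integral spread of a module in the natural way, as the minimum number of generators of a liftable integral reduction, and then reduce the statement to the classical Eisenbud--Huneke--Ulrich theory by comparing the two notions of reduction. Concretely: for a finitely generated submodule $M$ of a free module $F$, call $N \subseteq M$ a liftable integral reduction of $M$ if $M$ is contained in the liftable integral closure of $N$ inside $F$ in the sense of \cite{nmeUlr-lint}, and let the liftable integral spread $\ell^{\mathrm{li}}(M)$ be the minimum of $\mu(N)$ over all such $N$. The existence of the minimum is immediate since $M$ itself is such an $N$ and $\mu$ takes nonnegative integer values; the only subtlety is showing independence of the choice of embedding $M \hookrightarrow F$, which should follow from functorial properties of liftable integral closure.

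Next, I would invoke the comparison results from \cite{nmeUlr-lint}, which, under exactly the stated hypotheses (equal characteristic zero, or reduced), identify the liftable integral closure of a submodule of $F$ with the Eisenbud--Huneke--Ulrich integral closure. A submodule $N \subseteq M$ is therefore a liftable integral reduction of $M$ if and only if it is an EHU reduction of $M$. Since $k$ is infinite, the classical existence theorem of Eisenbud--Huneke--Ulrich produces a minimal EHU reduction $N_0$ of $M$ with $\mu(N_0) = \ell^{\mathrm{EHU}}(M)$, and by the comparison this $N_0$ is also a liftable integral reduction. This gives $\ell^{\mathrm{li}}(M) \leq \ell^{\mathrm{EHU}}(M)$. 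Conversely, any liftable integral reduction is in particular an EHU reduction, so its minimum generator count is at least $\ell^{\mathrm{EHU}}(M)$, yielding equality.

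The main obstacle is carefully invoking the coincidence of liftable integral closure with EHU integral closure at the level of reductions rather than merely at the level of elements of $F$, and ensuring that the minimality of $N_0$ as an EHU reduction transfers to minimality as a liftable integral reduction. If the literal identification of closures is unavailable or requires extra hypotheses, a fallback approach is to work at the level of fiber cones: the EHU analytic spread is the Krull dimension of an appropriate fiber ring, and one would show that a liftable integral dependence relation on generators descends to an integral dependence relation in the fiber ring, thereby reproving the minimum-generator characterization directly and bypassing the need to compare the two closures as sets.
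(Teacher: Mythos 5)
Your proposal has a fundamental definitional mismatch that undermines the whole argument. You define the liftable integral spread of $M$ as the \emph{minimum} of $\mu(N)$ over all liftable integral reductions $N$, and then observe that the existence of this minimum is trivial. But that is not what the paper (and the broader literature on $\cl$-spreads, going back to Northcott--Rees) means. The $\cl$-spread exists, by definition, when \emph{all minimal $\cl$-reductions have the same minimal number of generators}, and that common number is the spread. This is the nontrivial assertion in the theorem, and it is exactly what your approach fails to establish: showing that the minimum number of generators over all reductions equals the EHU analytic spread gives a lower bound, but it does not rule out a minimal reduction (one containing no strictly smaller reduction) that has strictly more generators.

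Beyond this, the middle of your argument leans on a ``classical existence theorem of Eisenbud--Huneke--Ulrich'' that produces a minimal reduction with a prescribed generator count; I do not believe such a statement appears in their work in the form you need, and in any case it would beg the question of why all minimal reductions have the same size. The paper's actual proof begins, as you suggest, by invoking \cite[Theorem 0.3]{EHU-Ralg} to identify liftable integral closure with integrality inside the symmetric algebra under the stated hypotheses --- so your first step is correct. But it then sets $\ell = \dim(R[L]/\m R[L])$ and proves two explicit assertions: that no reduction can have fewer than $\ell$ generators (by a module-finiteness dimension count), and that every reduction contains an $\ell$-generated reduction (by a graded Noether normalization argument on the standard graded $k$-algebra $R[U]/(\m R[L] \cap R[U])$, lifting the Noether normalization parameters to elements of $U$, and applying Nakayama's lemma). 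These two facts together show all minimal reductions are $\ell$-generated, which is what existence of the spread means. Your ``fallback'' paragraph correctly identifies the fiber cone as the right object, but it stops at the level of a plan; the Noether normalization plus Nakayama mechanism is the missing engine. There is also a final step you do not address: passing from the case $M$ free to a general finitely generated $M$ via a minimal free presentation $\pi : F \twoheadrightarrow M$ and checking that minimal generator counts are preserved because $\ker\pi \subseteq \m F$.
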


Throughout the paper, we demonstrate the usefulness of our results by applying them to the case of tight closure. One such result is 
the following new characterization of the finitistic tight closure test ideal:

\begin{thme}[See Theorem \ref{thm:testidealfinitistic}]
Let $R$ be a complete Noetherian local $F$-finite reduced ring of prime characteristic $p>0$ and $c$ be a big test element for $R$.  The finitistic test ideal of $R$ consists of those elements $a\in R$ such that for every $\m$-primary ideal $J$ of $R$ and every nonnegative integer $e\geq 0$, there is an $R$-linear map $g: R^{1/{p^e}} \ra R/J$ with $g(c^{1/p^e}) = a+J$.
\end{thme}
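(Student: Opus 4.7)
The plan is to derive Theorem~E by combining Theorem~C (applied to tight closure $\cl = *$) with a Matlis-duality computation that identifies the colon $(J : J^*)$ with an image of evaluation at $c^{1/p^e}$. Since $R$ is complete Noetherian local and reduced, it is approximately Gorenstein, so fix a cofinal decreasing sequence $\{J_t\}$ of irreducible $\m$-primary ideals. Applying Theorem~C with $I = R$ recovers the standard description
\[
\tau_{\rm fg}(R) \;=\; \bigcap_t (J_t : J_t^*).
\]

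For each irreducible $\m$-primary $J$, the quotient $R/J$ is Gorenstein Artinian and Matlis self-dual as an $R/J$-module. Hom--tensor adjunction combined with Matlis duality over $R/J$ then gives
\[
\Hom_R(R^{1/p^e}, R/J) \;\cong\; \Hom_{R/J}(R^{1/p^e}/JR^{1/p^e}, R/J) \;\cong\; (R^{1/p^e}/JR^{1/p^e})^{\vee},
\]
under which evaluation $g \mapsto g(c^{1/p^e})$ is the Matlis dual of the $R$-linear map $\bar\phi : R/J \to R^{1/p^e}/JR^{1/p^e}$ defined by $1 \mapsto c^{1/p^e}$. Using that $R$ is $F$-finite reduced, so that $p^e$-th roots in $R^{1/p^e}$ are unique and $rc^{1/p^e} \in JR^{1/p^e}$ is equivalent to $r^{p^e} c \in J^{[p^e]}$, one computes $\ker \bar\phi = B_e(J)/J$ with $B_e(J) := \{r \in R : r^{p^e} c \in J^{[p^e]}\}$. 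Dualizing, the image of $\mathrm{ev}_{c^{1/p^e}}$ in $R/J$ is the ideal $(J :_R B_e(J))/J$.

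The big test element assumption gives $J^* = \bigcap_e B_e(J)$, and the Epstein--Schwede characterization of the tight interior of the Artinian module $R/J$---which in the paper's Nakayama-interior framework is Matlis dual to the tight closure $0^*_{R/J} = J^*/J$---identifies $(J :_R J^*)/J$ with the $R$-submodule of $R/J$ generated by the evaluation images, promoted to an identification at each $e$ under the $F$-finite reduced hypothesis. Hence an element $a \in R$ satisfies the map condition for every pair $(J, e)$ if and only if $a + J \in (J :_R J^*)/J$ for every irreducible $\m$-primary $J$; intersecting over the cofinal sequence $\{J_t\}$ and invoking the first display completes the proof.

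The main obstacle is the per-$e$ matching between the a priori smaller image $(J :_R B_e(J))/J$ and the full colon $(J :_R J^*)/J$. Since $B_e(J) \supseteq J^*$ is automatic, one inclusion of colons is free; the other direction---promoting the sum-over-$e$ description of the tight interior of $R/J$ to an identification holding at each individual $e$---is where the Nakayama-interior duality of the paper, together with the big test element hypothesis and $F$-finiteness, does the real work. Everything else reduces to a formal combination of Matlis duality and Theorem~C.
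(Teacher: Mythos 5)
The proposal takes a genuinely different route from the paper's. The paper's proof is a short chain of equalities that passes through the Artinistic version of the submodule selector (Theorem~\ref{thm:cogen}), then cites \cite[Corollary~3.6]{nmeSc-tint} to identify $\alpha^\dual(R/I)$ with the tight interior, and finally invokes \cite[Theorem~2.5]{nmeSc-tint} for the trace characterization of the tight interior of an arbitrary module; the approximately-Gorenstein/irreducible-sequence part of Theorem~\ref{thm:finintideal} is not used. You instead restrict to a cofinal sequence of irreducible $\m$-primary ideals and do an explicit Matlis-duality computation inside the Gorenstein Artinian quotients $R/J$. Your identification of the image of evaluation at $c^{1/p^e}$ with $(J :_R B_e(J))/J$, where $B_e(J) = \{r : r^{p^e}c \in J^{[p^e]}\}$, is correct and is a nice, concrete calculation.

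The difficulty is the gap you flag at the end, and it is not merely technical. To match the theorem's ``for every $e$'' quantifier you need $\bigcap_e (J : B_e(J)) = (J : J^*)$. But your own duality computation over the Gorenstein Artinian ring $R/J$ produces the \emph{opposite} boolean combination: since $J^* = \bigcap_e B_e(J)$ and the annihilator correspondence in a Gorenstein Artinian local ring turns intersections into sums, one gets $(J : J^*)/J = \sum_e (J : B_e(J))/J$. Passing from this sum to the intersection---equivalently, showing that each $B_e(J)$ with $e \geq 1$ already equals $J^*$, so that each individual trace image equals the full tight interior of $R/J$---is exactly the content of \cite[Theorem~2.5]{nmeSc-tint}. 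It is not a formal consequence of the Nakayama-interior duality built in this paper, nor of the $F$-finite reduced hypothesis by itself, and your sentence asserting that the paper's machinery ``does the real work'' here does not point to any such argument. If you are prepared to cite \cite[Theorem~2.5]{nmeSc-tint} as a black box, as the paper does, then the irreducible-ideal reduction and the explicit colon computation are unnecessary and the proof collapses to the paper's chain of equalities; if you want to avoid that citation, you must actually prove that characterization, which the Matlis-duality bookkeeping you carried out does not do. (There is also a mismatch between your intersection over the irreducible $J_t$ and the theorem's quantifier over all $\m$-primary $J$, which would need a separate reconciliation, but the per-$e$ step is the essential missing ingredient.)
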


As a consequence of Corollary \ref{cor:irredseq}, we get a unified proof of two results on $F$-rational and $F$-injective rings: 
\begin{thmf}[See Corollaries \ref{cor:Gorsop} and \ref{cor:Fproperties}]
Let $(R,\m,k)$ be a complete Gorenstein Noetherian local ring.  Let $\cl$ be a functorial and residual closure operation.  If there exists a system of parameters ${\mathbf x} := x_1, \ldots, x_d$ for which $J_t := (x_1^t, \ldots, x_d^t)$ are $\cl$-closed for infinitely many $t\in \N$,  then every ideal $I$ of $R$ is $\cl$-closed and every finitely generated $R$-module $M$ $0$ is $\cl$-closed in $M$ (Corollary \ref{cor:Gorsop}).  

As a consequence we recover the well known results (Corollary \ref{cor:Fproperties}):
  \begin{enumerate}
    \item\ If $R$ is equicharacteristic and $F$-rational, then it is $F$-regular.
    \item\ If $R$ is of prime characteristic $p>0$ and $F$-injective, then it is $F$-pure.
    \end{enumerate}
\end{thmf}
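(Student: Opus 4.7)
The plan is to deduce the main assertion directly from Corollary~\ref{cor:irredseq}, which says that over a complete Gorenstein local ring, a functorial residual closure operation that fixes a cofinal decreasing sequence of irreducible $\m$-primary ideals must be trivial on every finitely generated module. Accordingly, the first step is to exhibit such a cofinal chain of irreducible $\cl$-closed ideals from the hypothesis of Theorem F.

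For any system of parameters $x_1, \ldots, x_d$ in the Gorenstein local ring $R$ of dimension $d$, the quotient $R/J_t$ is a zero-dimensional Gorenstein local ring and therefore has one-dimensional socle, so $J_t = (x_1^t, \ldots, x_d^t)$ is irreducible for every $t \ge 1$. The classical inclusions $\m^{dt} \subseteq J_t \subseteq \m^t$ make $\{J_t\}_{t \ge 1}$ cofinal with the powers of $\m$. Extracting the subsequence of indices $t$ for which $J_t$ is $\cl$-closed (infinite by hypothesis) still gives a cofinal chain of irreducible $\m$-primary $\cl$-closed ideals, so Corollary~\ref{cor:irredseq} applies and yields that $\cl$ is trivial on every finitely generated $R$-module. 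This proves the statement of Corollary~\ref{cor:Gorsop}.

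For the consequences in Corollary~\ref{cor:Fproperties}, both tight closure and Frobenius closure are functorial and residual, so it suffices to check the hypothesis on the $J_t$'s in each case. When $R$ is equicharacteristic and $F$-rational, every parameter ideal is tightly closed; since $x_1^t, \ldots, x_d^t$ is again a system of parameters for every $t$, each $J_t$ is tightly closed and the first part delivers $F$-regularity, giving (1).

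For (2), the main subtlety is to translate $F$-injectivity, i.e.\ the injectivity of the natural Frobenius action on $H^d_\m(R)$, into Frobenius-closedness of each individual $J_t$. Using the standard presentation $H^d_\m(R) = \varinjlim_t R/J_t$ with transition maps given by multiplication by $x_1 \cdots x_d$, an element $r + J_t$ lies in $J_t^F$ exactly when its class in $H^d_\m(R)$ is killed by some iterate of the Frobenius. This is the step I expect to be the main obstacle: it requires a careful socle analysis in the Gorenstein quotient $R/J_t$ (whose socle is generated by $(x_1 \cdots x_d)^{t-1}$) to conclude that the injectivity of Frobenius on $H^d_\m(R)$ forces each $J_t$ to be Frobenius-closed. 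Once this is in hand, the first part, applied to $\cl$ equal to Frobenius closure, yields that every ideal of $R$ is Frobenius-closed, i.e.\ $R$ is $F$-pure.
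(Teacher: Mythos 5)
Your approach to Corollary~\ref{cor:Gorsop} and to part (1) of Corollary~\ref{cor:Fproperties} is exactly the paper's: irreducibility of $J_t$ follows from $R$ being Gorenstein (the paper cites Matsumura, Theorem~18.1, where you argue it via the one-dimensional socle of $R/J_t$; both are fine), cofinality is immediate, and for (1) the definition of $F$-rational directly gives that each $J_t$ is tightly closed. The one place where you diverge is part (2), and there you leave a genuine gap: you flag as ``the main obstacle'' the implication that $F$-injectivity of a CM local ring forces each parameter ideal $J_t$ to be Frobenius closed, and you do not close it. The paper closes it by citing \cite[Remark~1.9]{FeWa-Freg}, which states precisely this equivalence for Cohen--Macaulay rings. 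If you wanted to prove it rather than cite it, your proposed socle analysis is more than is needed: since $x_1,\ldots,x_d$ is a regular sequence, the transition maps in $H^d_\m(R)=\varinjlim_t R/J_t$ are injective, so $[r;J_t]=0$ in $H^d_\m(R)$ iff $r\in J_t$; Frobenius sends $[r;J_t]$ to $[r^p;J_{tp}]=[r^p;J_t^{[p]}]$, so $F$-injectivity says $r^p\in J_t^{[p]}\Rightarrow r\in J_t$, and iterating gives $J_t^F=J_t$. Everything else in your argument is sound, including the use of cyclic purity (valid since Gorenstein rings are approximately Gorenstein) to conclude $F$-purity from triviality of Frobenius closure on ideals.
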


In Theorem \ref{thm:LScases}, we use results of Lyubeznik and Smith to describe cases where the tight interior and its Artinistic version agree, which enables us to compute tight interiors and hulls using Theorem \ref{thm:finintideal}.

\begin{thmg}[See Theorem~\ref{thm:LScases}]
Suppose that $(R,\m)$ is a complete reduced F-finite local ring and $I$ is an ideal of $R$. If $R$ has mild singularities or the pair $R,I$ arises from a graded situation, then the tight interior of $I$ and the Artinistic version of the tight interior of $I$ coincide.
\end{thmg}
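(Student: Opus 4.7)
The plan is to dualize the claim across the closure--interior correspondence of \cite{nmeRG-cidual} (cf.\ Theorem~A), so that it becomes a statement about tight closure, and then appeal to the results of Lyubeznik and Smith.

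Since $R$ is a complete Noetherian local ring, Matlis duality exchanges finitely generated modules with Artinian ones, and, accordingly, the Artinistic version of an interior operation is dual to the finitistic version of the paired closure; this is the principle embodied in Theorem~C. Specializing to tight closure $\cl = *$, the dual interior is the tight interior of \cite{nmeSc-tint}, and its Artinistic version corresponds under Matlis duality to finitistic tight closure of $0$ inside an Artinian module. Consequently, Theorem~G reduces to showing that the finitistic and big versions of tight closure of $0$ agree in every Artinian module of the form $(R/I)^\vee \hookrightarrow E_R(R/\m)$ arising as the Matlis dual of a cyclic quotient of $R$.

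This closure-side equality is precisely what Lyubeznik and Smith establish: in the graded setting and under their mild singularity hypotheses (both compatible with the $F$-finite reduced complete local setup here), the finitistic and big test ideals of $R$ coincide, and their arguments also give equality of finitistic and big tight closure of $0$ in the relevant Artinian cyclic duals. Passing this closure-side identity back through Theorem~A (or equivalently through Theorem~C with $\cl = *$) then produces the claimed equality of the tight interior and its Artinistic version.

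The main obstacle is that the Lyubeznik--Smith equality is customarily formulated for the test ideal of $R$ itself, whereas in Theorem~G the ideal $I$ is allowed to vary; one must verify that their argument applies module-wise to each $(R/I)^\vee$. In the graded case this extension is automatic, because $(R/I)^\vee$ inherits a compatible graded structure. In the mild singularities case one leverages the $F$-finiteness and reducedness of $R$ to ensure that a big test element of $R$ remains a test element in each Artinian module $(R/I)^\vee$, so that the ring-level finitistic $=$ big equality transfers module-wise, after which the duality closes the argument.
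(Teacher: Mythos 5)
Your strategic outline matches the paper's: dualize the interior statement to a closure statement about $\ann_E(I)$ inside $E$, invoke Lyubeznik--Smith, then pass back through the duality machinery (Theorem~\ref{thm:cogen} combined with \cite[Corollary 3.6]{nmeSc-tint} and Theorem~\ref{thm:finintideal}). That much is correct and is what the paper does.

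However, the step you flag as the ``main obstacle,'' and the fix you propose for it, do not hold up. You say the Lyubeznik--Smith equality is ``customarily formulated for the test ideal of $R$ itself'' and propose to bridge the gap by noting a big test element stays a test element on each $(R/I)^\vee$. But having a (big) test element controlling $0^*_M$ does not by itself yield $0^*_M = 0^{*fg}_M$; those are different assertions, and the transfer you gesture at is not a proof. In fact there is no gap to bridge: the paper cites Lyubeznik--Smith results that already apply at the module level. In the isolated-singularity case, \cite[Theorem 8.12]{LySm-Test} gives $0^*_{E/D} = 0^{*fg}_{E/D}$ directly for $D = \ann_E I$, and residuality then gives $D^*_E = D^{*fg}_E$. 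In the graded case, \cite[Theorem 3.3]{LySmFreg} gives the corresponding equality for the graded submodule $\ann_{E_A(A/\n)}J$ of $E_A(A/\n)$, together with an additional argument (which you omit) that the tight closure of $0$ in the Artinian module $G = E/D$ computed over $A$ agrees with that computed over $R = \widehat{A_\n}$, since elements of $A\setminus\n$ act as units on $G$. So your proposal identifies the right skeleton but mislocates the real work: the needed Lyubeznik--Smith statements are module-level from the start, and the graded case requires a change-of-ring comparison, not a test-element transfer.
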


 The organization of the paper is as follows:
 Section \ref{sec:background} gives relevant background. In Section \ref{sec:finint} we prove that we can compute interiors of ideals in Noetherian local rings using the dual closure and colons and we show that when our ring is Gorenstein, if parameter ideals are $\cl$-closed then all ideals are $\cl$-closed. In Section \ref{sec:manyNAK} we show that many module and algebra closures are Nakayama. In Section \ref{sec:nakayama} we define a Nakayma interior and show that Nakayama interiors are dual to Nakayama closures. In Section \ref{sec:expansions}, we discuss $\int$-expansions, finite co-generation, minimal co-generating sets, maximal expansions and the core-hull duality. In Section \ref{sec:chc} we compare related closures and interiors, define the co-spread as a dual to spread, and then show liftable integral spread exists. In Section \ref{sec:examples} we prove that in many cases of interest the tight interior and its Artinistic version coincide, which allows us to compute some examples of tight and Frobenius interiors and hulls. 

All rings will be assumed to be commutative unless otherwise specified.

\section{Background}
\label{sec:background}

We describe the duality between closure operations and interior operations over a \clr\ as first given by the first two named authors in \cite{nmeRG-cidual}. We recall the definition of a Nakayama closure, $\cl$-reductions, and the $\cl$-core, and give some of their properties.

\begin{defn}[{\cite{nmeRG-cidual}}]
Let $R$ be a ring, not necessarily commutative. Let $\cM$ be a class of (left) $R$-modules that is closed under taking submodules and quotient modules.  Let $\cP := \cP_\cM$ denote the set of all pairs $(L,M)$ where $M \in \cM$ and $L$ is a submodule of $M$ in $\cM$.

A \emph{\subsel} is a function $\alpha: \cM \rightarrow \cM$ such that \begin{itemize}
 \item $\alpha(M) \subseteq M$ for each $M \in \cM$, and
 \item for any isomorphic pair of modules $M, N \in \cM$ and any isomorphism $\phi: M \rightarrow N$, we have $\phi(\alpha(M)) = \alpha(\phi(M))$.
  \end{itemize}
  
An \emph{interior operation} is a submodule selector that is
\begin{itemize}
    \item \emph{order-preserving}, i.e. for any $L \subseteq M \in \cM$, $\alpha(L) \subseteq \alpha(M)$, and
    \item  \emph{idempotent}, i.e. for all $M \in \cM$, $\alpha(\alpha(M))=\alpha(M)$.
\end{itemize} 

A submodule selector $\alpha$ is \emph{functorial} if for any $g: M \rightarrow N$ in $\cM$, we have $g(\alpha(M)) \subseteq \alpha(N)$.

A \emph{closure operation} is an operation that sends each pair $(L,M) \in \cP$ to a module $L \subseteq L_M^\cl \subseteq M$ such that 
\begin{itemize}
    \item $\cl$ is \emph{idempotent,} i.e. $(L_M^\cl)_M^\cl=L_M^{\cl}$, and
    \item  $\cl$ is \emph{order-preserving on submodules,} i.e. if $L \subseteq N \subseteq M$, $L_M^\cl \subseteq N_M^\cl$.
\end{itemize} 

A closure operation is \emph{residual} if for any surjective map $q: M \twoheadrightarrow P$ in $\cM$, we have $(\ker q)^{\cl}_M = q^{-1}(0^{\cl}_P)$. Note that because $q$ is a surjection, we also have $q( (\ker q)_M^{\cl} ) = 0_P^{\cl}$.

A closure operation $\cl$ is \emph{finitistic} if for any $L \subseteq M$ for which $L^\cl_M$ is defined, and for any $z\in L^\cl_M$, there is some module $N$ with $L \subseteq N \subseteq M$ and $N/L$ \fg\ such that $z \in L^\cl_N$.

If $\cl$ is a closure operation, the \emph{finitistic version} $\cl_f$ of $\cl$ is given by \[
L^{\cl_f}_M = \bigcup \{L^\cl_N : L \subseteq N \subseteq M \text{ and } N/L \text{ is \fg}\}.
\]
\end{defn}

For the following definition and result, $R$ is a complete Noetherian local ring with maximal ideal $\m$, residue field $k$, and $E := E_R(k)$ the injective hull. We will use $^\vee$ to denote the Matlis duality operation. $\cM$ is a category of $R$-modules closed under  taking submodule and quotient modules, and such that for all $M \in \cM$, $M^{\vee\vee} \cong M$. For example, $\cM$ could be the category of \fg\ $R$-modules, or of Artinian $R$-modules.

\begin{defn}[{\cite{nmeRG-cidual}}]
Let $R$ be a complete Noetherian local ring.
Let $S(\cM)$ denote the set of all submodule selectors on $\cM$.  Define $\dual: S(\cM) \rightarrow S(\cM^\vee)$ as follows: For $\alpha \in S(\cM)$ and $M\in \cM^\vee$, \[
\alpha^\dual(M) := (M^\vee / \alpha(M^\vee))^\vee,
\]
considered as a submodule of $M$ in the usual way.  
\end{defn}

\begin{thm}[{\cite{nmeRG-cidual}}]
\label{thm:smsdual}
Let $r$ be a complete Noetherian local ring and
 $\alpha$ a submodule selector on $\cM$.  Then: \begin{enumerate}
 \item\label{it:duality} $(\alpha^\dual)^\dual = \alpha$,
 \item If $\alpha(M):=0_M^\cl$ for a residual closure operation $\cl$, then $\alpha^\dual$ is an interior operation. Conversely, if $\alpha$ is an interior operation, then $\alpha^\dual(M)=0_M^\cl$ for a uniquely determined residual closure operation $\cl$.
\end{enumerate}
\end{thm}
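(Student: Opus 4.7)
The plan is to treat the two parts separately, with the key tool throughout being the order-reversing Matlis correspondence between submodules of $M \in \cM$ and submodules of $M^\vee$, under which $L \subseteq M$ corresponds to $(M/L)^\vee \subseteq M^\vee$ (the kernel of $M^\vee \onto L^\vee$). Part (1) is then essentially a formal computation: unwinding the definition of $\dual$ twice and using $M^{\vee\vee} \cong M$ gives $(\alpha^\dual)^\dual(M) = (M^\vee/(M/\alpha(M))^\vee)^\vee$, and since $(M/\alpha(M))^\vee$ is the Matlis correspondent of $\alpha(M)$, taking the quotient and dualizing recovers $\alpha(M)$.

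For the forward direction of (2), assuming $\cl$ is residual and $\alpha(M) = 0^\cl_M$, I would verify the two interior-operation axioms for $\alpha^\dual$ on $\cM^\vee$. For order-preservation on $A \subseteq B$, the surjection $p \colon B^\vee \onto A^\vee$ has kernel $K := (B/A)^\vee$; residuality gives $K^\cl_{B^\vee} = p^{-1}(0^\cl_{A^\vee})$, while submodule-monotonicity of $\cl$ gives $0^\cl_{B^\vee} \subseteq K^\cl_{B^\vee}$ (since $0 \subseteq K$). Passing to quotients and dualizing turns this inclusion into $\alpha^\dual(A) \subseteq \alpha^\dual(B)$. For idempotence, I would apply residuality to the surjection $N^\vee \onto N^\vee/0^\cl_{N^\vee}$; combined with idempotence of $\cl$ this forces $0^\cl_{N^\vee/0^\cl_{N^\vee}} = 0$, which immediately yields $\alpha^\dual(\alpha^\dual(N)) = \alpha^\dual(N)$.

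For the converse, given an interior $\int$ on $\cM$, I would define a closure on $\cM^\vee$ by the formula forced by residuality, $L^\cl_N := q^{-1}(\int^\dual(N/L))$ where $q \colon N \onto N/L$, which automatically satisfies $L \subseteq L^\cl_N \subseteq N$ and $0^\cl_N = \int^\dual(N)$; uniqueness follows because any other residual closure agreeing with $\int^\dual$ on zero submodules must obey the same formula. Idempotence and submodule-monotonicity of $\cl$ then follow from the corresponding properties of $\int$ once I observe that under Matlis duality, $L^\cl_N$ corresponds to the submodule $\int((N/L)^\vee) \subseteq N^\vee$. The main obstacle, and the step that deserves the most care, is establishing this last dictionary entry: once I have $L^\cl_N \leftrightarrow \int((N/L)^\vee)$, submodule-monotonicity of $\cl$ translates (via $L \subseteq L' \Leftrightarrow (N/L')^\vee \subseteq (N/L)^\vee$) into order-preservation of $\int$, and idempotence of $\cl$ into $\int(\int(K)) = \int(K)$. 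With that correspondence in hand the rest is bookkeeping.
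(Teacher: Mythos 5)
This theorem is quoted in the paper from \cite{nmeRG-cidual} without proof, so there is no in-paper argument to compare against; I can only assess your reconstruction on its own terms.

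Your argument is correct. Part (1) unwinds $\dual$ twice and uses the exact sequence $0 \to (M/\alpha(M))^\vee \to M^\vee \to \alpha(M)^\vee \to 0$ together with $M^{\vee\vee}\cong M$, which is exactly what is needed. For the forward direction of (2), your derivation of order-preservation from residuality of $\cl$ applied to $p\colon B^\vee \onto A^\vee$ together with $0 \subseteq (B/A)^\vee$, and your derivation of idempotence from residuality applied to $N^\vee \onto N^\vee/0^\cl_{N^\vee}$ combined with idempotence of $\cl$ (forcing $0^\cl_{N^\vee/0^\cl_{N^\vee}} = 0$), both check out. For the converse, the formula $L^\cl_N := q^{-1}(\alpha^\dual(N/L))$ is the only possibility consistent with residuality and $0^\cl_N=\alpha^\dual(N)$, which gives uniqueness and residuality at once; and your dictionary entry, that $L^\cl_N \subseteq N$ corresponds under Matlis duality to $\alpha\bigl((N/L)^\vee\bigr) \subseteq N^\vee$, is indeed the pivot that turns submodule-monotonicity of $\cl$ into order-preservation of $\alpha$ (the correspondence $L \mapsto (N/L)^\vee$ being order-reversing is applied twice, so the directions come out right) and idempotence of $\cl$ into idempotence of $\alpha$. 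The only step you leave implicit is that $\alpha^\dual$ is itself a submodule selector, i.e.\ that it commutes with isomorphisms and lands inside $M$; this follows since $\alpha$ commutes with isomorphisms and $(-)^\vee$ is functorial, but it deserves a sentence in a full writeup.
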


\begin{rem}
In consequence, if $\cl$ is a residual closure operation on $\cM$, its dual interior operation can be expressed as
\[\int(M)=\left(\frac{M^\vee}{0_{M^\vee}^{\cl}}\right)^\vee,\]
where $M \in \cM^\vee$.
\end{rem}

\begin{defn}
Let $R$ be a ring, not necessarily commutative, and let $L \subseteq M$ be $R$-submodules of $N \in \cM$.  We say that $L \subseteq N$ is a \emph{$\cl$-reduction of $N$ in $M$} if $L^{\cl}_M=N^{\cl}_M$.
\end{defn}

Note that $L \subseteq N \subseteq L_M^\cl$ if and only if $L$ is a $\cl$-reduction of $N$ in $M$.

\begin{rem}
\begin{enumerate}
\item If $N=R$ and $J \subseteq I$ are ideals of $R$ with $I^{\cl}=J^{\cl}$, this agrees with the notion of $J$ being a $\cl$-reduction of $I$.  
\item When $M=R$ is the ring, we will write $(-)^\cl$ in place of $(-)^\cl_R$.)  However, for a general $R$-module $M$, the closure of a submodule $N$ may change depending on the ambient $R$-module $M$.
Hence, we write $N^{\cl}_M$ to emphasize that we are taking the closure of $N$ in $M$.
\end{enumerate}
\end{rem}

\begin{defn}
Let $R$ be a ring, not necessarily commutative, and $\cl$ a closure operation defined on a class of $R$-modules $\cM$ that is closed under submodules, quotient modules, and extensions.  If $M,N$ are elements of $\cM$, with $N \subseteq M$, the \emph{$\cl$-core of $N$ with respect to $M$} is the intersection of all $\cl$ reductions of $N$ in $M$, or
\[\cl\core_M(N):= 
\bigcap_{L \subseteq N \subseteq L_M^{\cl}} L.\]
\end{defn}

\begin{defn}
\label{def:Nakayama}
Let $(R,\m)$ be a Noetherian local ring and $\cl$ be a closure operation on the class of \fg\ $R$-modules $\cM$.  We say that $\cl$ is a \emph{Nakayama closure} if for $L \subseteq N \subseteq M \in \cM$ satisfying $L\subseteq N \subseteq (L+\m N)^{\cl}_M$ implies that $L^{\cl}_M=N^{\cl}_M$.
\end{defn}

Note that this is consistent with the definition for ideals \cite{nme*spread} by letting $M=R/J$ and $L=I/J$ where $J \subseteq I$ are ideals. 
\begin{prop}\label{NR Ext}\cite{nme*spread, nme-sp}
Let $(R, \m)$ be a Noetherian local ring and $\cl$ a Nakayama closure operation on the class of \fg\ $R$-modules $\cM$. Let $N \subseteq M$ be elements of $\cM$. For any $\cl$-reduction $L$ of $N$ in $M$, there exists a minimal $\cl$-reduction $K$ of $N$ in $M$ such that $K \subseteq L$. Moreover, any minimal generating set of $K$ extends to one of $L$.
\end{prop}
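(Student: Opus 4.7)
The plan is to proceed by induction on $n := \mu(L)$, the minimal number of generators of $L$. The base case $n = 0$ is trivial: then $L = 0$, and since $L$ is a $\cl$-reduction of $N$ we have $N^{\cl}_M = 0$, so $K := L$ is a minimal $\cl$-reduction.

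For the inductive step, assume $n > 0$ and that the result holds for $\cl$-reductions with fewer than $n$ generators. If $L$ is itself a minimal $\cl$-reduction, take $K = L$. Otherwise, choose a proper submodule $L' \subsetneq L$ with $(L')^{\cl}_M = N^{\cl}_M$. The classical Nakayama lemma applied to $L/L'$ gives $L' + \m L \subsetneq L$, so $V := (L' + \m L)/\m L$ is a proper subspace of $L/\m L$ of some dimension $m < n$. Lift a basis of $V$ to elements $l_1',\dots,l_m' \in L' + \m L$ and extend to a minimal generating set $l_1',\dots,l_n'$ of $L$. Setting $L'' := (l_1',\dots,l_m')$, both $L'' + \m L$ and $L' + \m L$ are the preimage of $V$ under $L \twoheadrightarrow L/\m L$, so $L'' + \m L = L' + \m L$. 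Then
\[
L \subseteq L^{\cl}_M = (L')^{\cl}_M \subseteq (L' + \m L)^{\cl}_M = (L'' + \m L)^{\cl}_M,
\]
and the Nakayama closure property (Definition~\ref{def:Nakayama}) applied to the inclusion $L'' \subseteq L$ yields $(L'')^{\cl}_M = L^{\cl}_M = N^{\cl}_M$. Hence $L''$ is a $\cl$-reduction of $N$ in $M$ with $\mu(L'') \le m < n$, and the inductive hypothesis produces a minimal $\cl$-reduction $K \subseteq L''$ such that every minimal generating set of $K$ extends to one of $L''$.

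For the ``moreover'' part it remains to upgrade this extension property from $L''$ to $L$. Since $l_1',\dots,l_m'$ are part of the minimal generating set $l_1',\dots,l_n'$ of $L$, their images in $L/\m L$ are linearly independent, so the image of the natural map $L'' \to L/\m L$ has dimension exactly $m$; on the other hand $L''$ is generated by $m$ elements, so $\dim_k L''/\m L'' \le m$. These two facts together force the induced map $L''/\m L'' \to L/\m L$ to be injective, and composing with the injection $K/\m K \hookrightarrow L''/\m L''$ from the induction shows that $K/\m K$ embeds into $L/\m L$, which is precisely the assertion that any minimal generating set of $K$ extends to one of $L$. The main obstacle is engineering the descent step: deleting one of the original generators of $L$ need not produce a $\cl$-reduction, so one must change the minimal generating set of $L$ using the proper subspace $V \subseteq L/\m L$ determined by $L'$, and the Nakayama closure property is exactly what allows the replacement of $L'$ by the smaller, well-behaved submodule $L''$ generated by part of a minimal generating set of $L$ while preserving the $\cl$-reduction status.
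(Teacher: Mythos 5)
Your proof is correct, and it uses the same essential mechanism as the cited argument of Epstein: the classical Nakayama lemma locates the proper subspace $V := (L'+\m L)/\m L \subsetneq L/\m L$ determined by a strictly smaller reduction $L'$, and the Nakayama closure property is precisely what lets you replace $L'$ by the submodule $L''$ generated by part of a minimal generating set of $L$ while preserving $(L'')^{\cl}_M = N^{\cl}_M$. Organizing this as an induction on $\mu(L)$, and chaining the extension property through the injections $K/\m K \hookrightarrow L''/\m L'' \hookrightarrow L/\m L$ (with Heinzer--Ratliff--Rush, Lemma~\ref{lemma:oldlemma}, translating injectivity into the extension statement), is a clean repackaging of the standard proof.
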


\begin{rem}
The above result is stated only for ideals in \cite[Lemma 2.2]{nme*spread}, but the full proof of \cite[Lemma 2.2]{nme*spread} applies \emph{mutatis mutandis} to finitely generated modules (as is mentioned for part of the result on page 2210 of \cite{nme-sp}).
\end{rem}

The principal component of the above property holds for Artinian modules too, and in much greater generality.

\begin{prop}
Let $R$ be any associative ring (not necessarily commutative), let $C$ be an $R$-module, and let $\cl$ be a closure operation on (left) $R$-submodules of $C$, and let $A \subseteq B \subseteq C$ be $R$-modules with $A$ Artinian.  Suppose that $A$ is a $\cl$-reduction of $B$ in $C$.  Then there is a minimal $\cl$-reduction $K$ of $B$ in $C$, such that $K \subseteq A$.
\end{prop}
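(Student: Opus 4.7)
The plan is to reduce the existence of a minimal $\cl$-reduction inside $A$ to the descending chain condition on $A$. Define
\[
\cS := \{L \subseteq A : L^{\cl}_C = B^{\cl}_C\},
\]
that is, the collection of submodules of $A$ that are themselves $\cl$-reductions of $B$ in $C$. The set $\cS$ is nonempty because $A$ itself lies in $\cS$ by hypothesis. Since $A$ is Artinian, the equivalent minimal-condition form of DCC says that every nonempty family of submodules of $A$ contains a minimal element. Applying this to $\cS$ produces some $K \in \cS$ that is minimal among submodules of $A$ lying in $\cS$.

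It then remains to upgrade "minimal inside $\cS$" to "minimal among all $\cl$-reductions of $B$ in $C$." This is a one-line check: if $K' \subseteq K$ is any $\cl$-reduction of $B$ in $C$, then $K' \subseteq K \subseteq A$, so $K' \in \cS$, and minimality of $K$ in $\cS$ forces $K' = K$. Hence $K$ is a minimal $\cl$-reduction of $B$ in $C$ in the global sense, and by construction $K \subseteq A$.

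There is no substantive obstacle here; the argument uses only that Artinian modules satisfy the minimal-condition on families of submodules, together with the observation that any sub-$\cl$-reduction of something contained in $A$ is automatically still contained in $A$. In particular, no use is made of commutativity, finite generation, or any structural property of $\cl$ beyond its being order-preserving on submodules (which is built into the definition of closure operation). This explains why the result holds at the stated level of generality, in contrast to Proposition~\ref{NR Ext}, where the Nakayama hypothesis is used to construct minimal reductions by extending minimal generating sets.
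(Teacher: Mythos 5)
Your argument is correct and matches the paper's proof essentially word for word: both define $\cS$ as the family of submodules of $A$ that are $\cl$-reductions of $B$ in $C$, invoke the minimal-condition formulation of DCC on the Artinian module $A$ to extract a minimal element $K$ of $\cS$, and then observe that any sub-$\cl$-reduction of $K$ lands in $\cS$, forcing global minimality.
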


\begin{proof}
Let $\cS$ be the set of $R$-submodules $D$ of $A$ such that $D$ is a $\cl$-reduction of $B$ in $C$.  Note that $A \in \cS$, whence $\cS \neq \emptyset$.  Since $A$ is Artinian and $\cS$ is a nonempty collection of submodules of $A$, $\cS$ has a minimal element, say $K$.  We claim that $K$ is in fact a minimal $\cl$-reduction of $B$ in $C$, since if $E$ is a $\cl$-reduction of $B$ in $C$ and $E \subseteq K$, then in particular $E \subseteq A$, whence $E \in \cS$.  By minimality of $K$, then, we have $E=K$.  Hence $K$ is a minimal $\cl$-reduction of $B$ in $C$, and $K \subseteq A$.
\end{proof}
 
\section{The Artinistic interior of an ideal}\label{sec:finint}

In this section we prove a result on finitistic versions of closure operations that allows us to describe their dual interior operations. All rings are commutative unless otherwise specified.

\begin{defn}[{\cite[Definition 5.1]{nmeRG-cidual}}]\label{def:fin}
Let $\cl$ be a closure operation on a class $\cM$ of $R$-modules. We define the \emph{finitistic version $\cl_f$ of $\cl$} by
\[L^{\cl_f}_M:=\bigcup \{L_{N}^{\cl}: L \subseteq N \subseteq M \in \cM, N/L \text{ finitely-generated}\} .\]

Let $\alpha$ be a submodule selector on a class of $R$-modules. The \emph{finitistic version $\alpha_f$ of $\alpha$} is defined by
\[\alpha_f(N):=\bigcup \{\alpha(M) : M \subseteq N \text{ is \fg}\}.\]
\end{defn}

We give a lemma that may be well-known, but that we include in the interest of keeping this paper self-contained.

\begin{lemma}\label{lem:altfg}
Let $\cl$ be a functorial, residual closure operation.  Let $L \subseteq M$ be modules such that $A^{\cl}_B$ is defined whenever $T \subseteq M$ and $A \subseteq B \subseteq M/T$.  Then \[
L^{\cl_f}_M = \bigcup \{(L \cap U)^{\cl}_U : U \text{ is a \fg\ submodule of } M\}.
\]
\end{lemma}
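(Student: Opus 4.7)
The plan is to prove the equality by double containment, with the second isomorphism theorem
$(L+U)/L \cong U/(L\cap U)$ as the bridge between the two unions, and the residual/functorial
properties of $\cl$ as the tool that transports closures across this isomorphism.

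For the $\supseteq$ direction, I would fix a finitely generated submodule $U \subseteq M$, set $N := L+U$, and observe that $N/L$ is a quotient of $U$ and hence finitely generated, so $L^\cl_N$ appears in the union defining $L^{\cl_f}_M$. Let $p\colon N \twoheadrightarrow N/L$ and $q\colon U \twoheadrightarrow U/(L\cap U)$ be the canonical surjections and let $\phi\colon U/(L\cap U) \xrightarrow{\sim} N/L$ be the isomorphism from the second isomorphism theorem, so $p|_U = \phi \circ q$. By residuality, $(L\cap U)^\cl_U = q^{-1}\bigl(0^\cl_{U/(L\cap U)}\bigr)$ and $L^\cl_N = p^{-1}\bigl(0^\cl_{N/L}\bigr)$. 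Because any submodule selector commutes with isomorphisms, $\phi\bigl(0^\cl_{U/(L\cap U)}\bigr) = 0^\cl_{N/L}$; chasing an element through $p|_U = \phi\circ q$ then yields $(L\cap U)^\cl_U \subseteq L^\cl_N \subseteq L^{\cl_f}_M$.

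For the $\subseteq$ direction, I would take $x \in L^{\cl_f}_M$, so there is some $N$ with $L \subseteq N \subseteq M$, $N/L$ finitely generated, and $x \in L^\cl_N$. Choose lifts $n_1,\dots,n_k \in N$ of a finite generating set of $N/L$ and set $U := Rn_1 + \dots + Rn_k + Rx$, which is a finitely generated submodule of $M$ with the two crucial properties $x \in U$ and $L+U = N$. Running the same diagram of surjections and the second-isomorphism-theorem iso $\phi$ in the reverse direction, together with the fact that $\phi^{-1}\bigl(0^\cl_{N/L}\bigr) = 0^\cl_{U/(L\cap U)}$, gives $q(x) \in 0^\cl_{U/(L\cap U)}$ and hence $x \in (L\cap U)^\cl_U$, as desired.

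The main obstacle is not any computation but rather the organizational one of choosing $U$ correctly in the reverse direction: one needs $U$ both to contain $x$ and to satisfy $L+U = N$ so that the isomorphism $U/(L\cap U) \cong N/L$ identifies $q(x)$ with $p(x)$. Adjoining $x$ to any finite lift of the generators of $N/L$ accomplishes both at once. I would also remark that the standing hypothesis "$A^\cl_B$ is defined whenever $T \subseteq M$ and $A \subseteq B \subseteq M/T$" is exactly what guarantees that all the closures appearing in the argument, namely $L^\cl_N$, $(L\cap U)^\cl_U$, $0^\cl_{U/(L\cap U)}$, and $0^\cl_{N/L}$, are well-defined.
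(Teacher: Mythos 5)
Your proposal is correct and follows essentially the same route as the paper: both directions use the Second Isomorphism Theorem isomorphism $U/(L\cap U)\cong (L+U)/L$ together with residuality to transport the element across the isomorphism, and both handle the reverse direction by adjoining $x$ (the paper's $z$) to lifts of a finite generating set of $N/L$ so that $U$ is finitely generated, contains $x$, and satisfies $L+U=N$. The only cosmetic difference is that you invoke "submodule selectors commute with isomorphisms" where the paper cites functoriality, but this is the same fact.
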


\begin{proof}
For the forward containment, let $z \in L^{\cl_f}_M$.  Then there is some module $N$ with $L \subseteq N \subseteq M$, $N/L$ \fg\, and $z \in L^{\cl}_N$.  Set $x_1 := z$, and choose $x_2, \ldots, x_t \in N$ such that the images of the $x_j$ generate $N/L$.  That is, $L + U= N$, where $U := \sum_{j=1}^t R x_j$.  By functoriality, this means $\bar z \in 0^{\cl}_{N/L} = 0^{\cl}_{(L+U)/L} \cong 0^{\cl}_{U / (L \cap U)}$, by the Second Isomorphism Theorem.  Under this canonical isomorphism, the image of $z$ in $N/L$ maps to the image of $z$ in $U/(L \cap U)$.  So by residuality, we then have $z \in (L \cap U)^{\cl}_U$.

For the reverse containment, suppose $U$ is a \fg\ submodule of $M$ and $z \in (L \cap U)^{\cl}_U$.  Then $\bar z \in 0^{\cl}_{U / (L \cap U)} \cong 0^{\cl}_{U+L / L}$, so by the argument in the first paragraph, we have $z \in L^{\cl}_{U+L}$.  Additionally, $U+L$ is a submodule of $M$ containing $L$ such that $U+L / L \cong U / (U \cap L)$ is \fg\ (as it is isomorphic a quotient of the \fg\ module $U$). Hence $z\in L^{\cl_f}_M$.
\end{proof}

The following Theorem is analogous to (and for the most part a generalization of) \cite[Theorem 5.5]{nmeRG-cidual}.

\begin{thm}\label{thm:finintideal}
Let $(R,\m,k)$ be a complete Noetherian local ring.  Let $\alpha$ be a functorial submodule selector (i.e. a preradical) on $\cA$, the class of Artinian $R$-modules.  Let $\alpha_f$ be its finitistic version.  Let $\cl$ (resp. $\cl_f$) be the operation such that $L^{\cl}_M / L = \alpha(M/L)$ (resp. $L^{\cl_f}_M/ L = \alpha_f(M/L)$), where $M/L$ is Artinian.  Assume that $\cl$ and $\cl_f$ are closure operations.  Let $I$ be any ideal of $R$.  Then: \begin{align*}
    \alpha^\dual(I) &= \ann_R((\ann_E(I))^{\cl}_E) = \bigcap_{M \in \cA} \ann_R((\ann_M(I))^{\cl}_M) \\
    &\subseteq (\alpha_f)^\dual(I) = \ann_R((\ann_E(I))^{\cl_f}_E) = \bigcap_{M \subseteq E \text{ f.g.}} \ann_R((\ann_M(I))^{\cl}_M) 
    \\&=
    \bigcap_{\lambda(M) < \infty} \ann_R((\ann_M(I))^{\cl}_M)
    \subseteq \bigcap_{\lambda(R/J)<\infty} \ann_R((\ann_{R/J}(I))^{\cl}_{R/J}) \\ &= \bigcap_{\lambda(R/J)<\infty} (J:(J:I)^{\cl}_R).
\end{align*}
Moreover, the last containment is an equality when $R$ is approximately Gorenstein.  In that case if $\{J_t\}$ is a decreasing nested sequence of irreducible ideals that is cofinal with the powers of $\m$, then in fact we have \[
(\alpha_f)^\dual(I) = \bigcap_{t\geq 0} (J_t : (J_t : I)^{\cl}_R).
\]
\end{thm}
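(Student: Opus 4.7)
My plan is to walk through the chain of equalities and containments, using Matlis duality and the fact that the closure operation $\cl$ inherits functoriality and residuality (and hence also order-preservation on ambient modules) from $\alpha$ via the correspondence $L^\cl_M/L = \alpha(M/L)$.

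For the $\alpha^\dual$ row, I first establish $\alpha^\dual(I) = \ann_R(\ann_E(I)^\cl_E)$ by unwinding the Matlis dual of $I$: the short exact sequence $0 \to I \to R \to R/I \to 0$ dualizes to $0 \to \ann_E(I) \to E \to I^\vee \to 0$, so $I^\vee \cong E/\ann_E(I)$, under which $\alpha(I^\vee)$ corresponds to $\ann_E(I)^\cl_E/\ann_E(I)$. Hence $I^\vee/\alpha(I^\vee) \cong E/\ann_E(I)^\cl_E$, and its Matlis dual is $\ann_R(\ann_E(I)^\cl_E)$. The equality with $\bigcap_{M \in \cA} \ann_R(\ann_M(I)^\cl_M)$ then has one containment trivially by taking $M=E$; for the other, every Artinian $M$ embeds in $E^n$ (since $M^\vee$ is \fg), and applying functoriality of $\cl$ to $M \hookrightarrow E^n$ together with each coordinate projection $\pi_i: E^n \to E$ gives $\ann_M(I)^\cl_M \subseteq \ann_{E^n}(I)^\cl_{E^n} \subseteq (\ann_E(I)^\cl_E)^n$, whence $\ann_R(\ann_M(I)^\cl_M) \supseteq \alpha^\dual(I)$.

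For the $(\alpha_f)^\dual$ row, the containment $\alpha^\dual(I) \subseteq (\alpha_f)^\dual(I)$ follows from $\alpha_f \le \alpha$ (a consequence of functoriality of $\alpha$ on inclusions) combined with the contravariance of Matlis duality on quotients. The equality $(\alpha_f)^\dual(I) = \ann_R(\ann_E(I)^{\cl_f}_E)$ is the same Matlis duality computation applied to $\alpha_f$, after first verifying that $\cl_f$ also satisfies $L^{\cl_f}_M/L = \alpha_f(M/L)$ by direct manipulation of the definitions. Next, Lemma \ref{lem:altfg} combined with $\ann_E(I) \cap U = \ann_U(I)$ gives $\ann_E(I)^{\cl_f}_E = \bigcup_{U \subseteq E \text{ f.g.}} \ann_U(I)^\cl_U$, and since $\ann_R$ turns unions into intersections, we obtain the equality with $\bigcap_{U \subseteq E \text{ f.g.}} \ann_R(\ann_U(I)^\cl_U)$. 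To extend to $\bigcap_{\lambda(M)<\infty}$ I reuse the embedding-plus-projection argument of the first row: any finite length $M$ embeds in $E^n$ with each $\pi_i(M) \subseteq E$ of finite length, hence \fg.

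For the colon formula and the approximately Gorenstein refinement, the containment $\bigcap_{\lambda(M) < \infty} \subseteq \bigcap_{\lambda(R/J) < \infty}$ is trivial, and the identity $\ann_R(\ann_{R/J}(I)^\cl_{R/J}) = (J : (J:I)^\cl_R)$ reduces to $\ann_{R/J}(I) = (J:I)/J$ together with $((J:I)/J)^\cl_{R/J} = (J:I)^\cl_R/J$; the latter follows because both sides correspond to the same submodule $\alpha(R/(J:I))$ under $L^\cl_M/L = \alpha(M/L)$. For the ``moreover'' part, I fix a cofinal descending sequence $\{J_t\}$ of irreducible $\m$-primary ideals, so that each $R/J_t$ has one-dimensional socle, embeds in $E$, and $E = \bigcup_t R/J_t$. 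Any \fg\ submodule $U \subseteq E$ then lies in some $R/J_t$, so functoriality gives $\ann_R(\ann_U(I)^\cl_U) \supseteq \ann_R(\ann_{R/J_t}(I)^\cl_{R/J_t})$; conversely, cofinality shows that any $\m$-primary $J$ contains some $J_t$, yielding $(J : (J:I)^\cl_R) \supseteq (J_t : (J_t:I)^\cl_R)$. These together collapse all intersections to $\bigcap_t (J_t : (J_t:I)^\cl_R) = (\alpha_f)^\dual(I)$. The main obstacle throughout is extracting functoriality and order-preservation on \emph{ambient} modules for $\cl$ from the functoriality of $\alpha$, which is what makes the embedding-into-$E^n$ reductions run; the role of the approximately Gorenstein hypothesis is precisely to exhaust $E$ by cyclic quotients $R/J_t$, enabling the final reduction of the indexing set to a cofinal sequence of irreducibles.
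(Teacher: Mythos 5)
Your proof follows the same overall route as the paper: unwind $\alpha^\dual(I)$ via Matlis duality to obtain the colon/annihilator description, then move from $E$ to arbitrary Artinian modules via embeddings into $E^{\oplus n}$, use Lemma~\ref{lem:altfg} for the finitistic case, and exploit the exhaustion $E = \bigcup_t \ann_E(J_t)$ in the approximately Gorenstein case. All the main steps are sound. The one genuinely different move is your treatment of the equality $\bigcap_{M\subseteq E\text{ f.g.}} = \bigcap_{\lambda(M)<\infty}$: you embed a finite-length $M$ into $E^n$ and push forward along the coordinate projections to finitely generated submodules of $E$, whereas the paper first decomposes $M$ into indecomposable summands and embeds each into $E$ individually. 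Both work, with about the same amount of effort, since the heart of both is functoriality of $\cl$ applied to the maps $M\to E$.

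One caveat: in the ``moreover'' paragraph, the first half (that any f.g.\ $U\subseteq E$ sits inside some $\ann_E(J_t)\cong R/J_t$, giving $\bigcap_t (J_t:(J_t:I)^\cl) \subseteq \ann_R(\ann_U(I)^\cl_U)$) already closes the loop of containments and finishes the argument. Your ``conversely'' clause — that $J\supseteq J_t$ implies $(J:(J:I)^\cl)\supseteq (J_t:(J_t:I)^\cl)$ — is not justified and is not an obvious monotonicity (the inclusion $(J_t:I)^\cl \subseteq (J:I)^\cl$ runs the wrong way when you try to multiply by $a$ and land in $J$, and the induced surjection $R/J_t\onto R/J$ only gives a containment of images, not of annihilators). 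Fortunately you don't need it: once $\bigcap_t (J_t:(J_t:I)^\cl)\subseteq (\alpha_f)^\dual(I)$ is established, the trivial chain $(\alpha_f)^\dual(I)\subseteq \bigcap_{\lambda(R/J)<\infty}(J:(J:I)^\cl) \subseteq \bigcap_t (J_t:(J_t:I)^\cl)$ forces all to coincide, which is exactly how the paper concludes. You should delete or replace that ``conversely'' clause.
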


\begin{rem}For examples of computations using the last part of the result, see Section \ref{sec:examples}.  For an explanation of the term ``Artinistic'' to describe the phenomenon here, see the end of Section~\ref{sec:expansions}, in particular Definition \ref{def:cofin} and Theorem \ref{thm:cogen}.
\end{rem}
 
\begin{proof}
For the first equality, we have \begin{align*}
\alpha^\dual(I) &= \left(\frac{I^\vee}{\alpha(I^\vee)}\right)^\vee = \left(\frac{E/\ann_EI}{(\ann_E(I)^{\cl}_E / \ann_E(I))}\right) \\
&= \left(\frac E {\ann_E(I)^{\cl}_E}\right)^\vee = \ann_R(\ann_E(I)^{\cl}_E).\end{align*}
Note that this also establishes the third equality.

For the second equality, we prove both containments.  For the forward containment, let $M \in \cA$.  Then there is some positive integer $t$ and some inclusion map $j: M \into E^{\oplus t}$.  Then $j(\ann_M(I)) \subseteq \ann_{E^{\oplus t}}(I)$.  Thus since $\cl$ is functorial (by the assumption on $\alpha$), we have \[
j(\ann_M(I)^{\cl}_M) \subseteq
\ann_{E^{\oplus t}}(I)^{\cl}_{E^{\oplus t}} = ((\ann_E(I))^{\cl}_E)^{\oplus t}.
\]
Hence, \begin{align*}
\ann_R(\ann_M(I)^{\cl}_M) &= \ann_R(j(\ann_M(I)^{\cl}_M)) \\
&\supseteq \ann_R((\ann_E(I)^{\cl}_E)^{\oplus t}) = \ann_R(\ann_E(I)^{\cl}_E).
\end{align*}
For the reverse containment, we merely note that $E \in \cA$.

The first displayed containment holds because $\alpha_f \leq \alpha$ and $\dual$ is order-reversing.

For the fourth equality, we prove both containments.  
For the forward containment, let $M$ be a \fg\ submodule of $E$.  
Then by Lemma~\ref{lem:altfg}, 
\[(\ann_MI)^{\cl}_M = ((\ann_EI) \cap M)^{\cl}_M \subseteq (\ann_EI)^{\cl_f}_E,\] 
so $\ann_R((\ann_EI)^{\cl_f}_E) \subseteq \ann_R((\ann_MI)^{\cl}_M)$ for all such $M$.  For the reverse containment, let $a\in \displaystyle \bigcap_{M \subseteq E \text{ f.g.}} \ann_R((\ann_M(I))^{\cl}_M)$ and let $z \in (\ann_EI)^{\cl_f}_E$.  Then by Lemma~\ref{lem:altfg}, there is some \fg\ submodule $M \subseteq E$ with $z\in (M \cap \ann_EI)^{\cl}_M = (\ann_MI)^{\cl}_M$. Hence $az=0$.

For the fifth equality, we prove both containments.  For the forward containment, let $N$ be an $R$-module of finite length.  Then we have $N = \bigoplus_{j=1}^t N_j$, where each $N_j$ is indecomposable and of finite length.  Hence, for each $j$ there is some injective $R$-linear map $i_j: N_j \hookrightarrow E$.  Set $M_j := i_j(N_j)$.  Then each $M_j$ is a \fg\ submodule of $E$.  Thus we have \begin{align*}
\ann_R((\ann_NI)^{\cl}_N) &= \ann_R\left(\bigoplus_{j=1}^t (\ann_{N_j}I)^{\cl}_{N_j}\right) = \bigcap_{j=1}^t \ann_R((\ann_{N_j}I)^{\cl}_{N_j}) \\
&= \bigcap_{j=1}^t \ann_R((\ann_{M_j}I)^{\cl}_{M_j}) \supseteq \bigcap_{M \subseteq E \text{ f.g.}}  \ann_R((\ann_M(I))^{\cl}_M).
\end{align*}
For the reverse containment, we merely recall that any \fg\ submodule of $E$ has finite length. 

For the sixth equality, let $J$ be an arbitrary ideal of finite colength.  Then $\ann_{R/J}I = (J:I) / J$, so by residuality of the closure operation, we have $(\ann_{R/J}I)^{\cl}_{R/J} = (J:I)^{\cl}_R / J$.  Now, for any ideal $K$ of $R$ that contains $J$, we have $\ann_R(K/J) = (J:K)$.  Hence in particular, we have \[
\ann_R((\ann_{R/J}(I))^{\cl}_{R/J}) = \ann_R((J:I)^{\cl}_R/J) = (J:(J:I)^{\cl}_R).
\]

Now consider the case where $R$ is approximately Gorenstein, with the ideals $J_t$ as given in the statement of the theorem.  Let $N_t := \ann_E J_t$ for each $t\in \N$.  By the theory of approximately Gorenstein rings \cite[Page 157]{HoFNDTC}, we have $N_t \cong R/J_t$ as $R$-modules, $N_t \subseteq N_{t+1}$ for each $t$, and $E = \bigcup_{t\in \N} N_t$.  Now let $M$ be a \fg\ submodule of $E$.  Say $M = \sum_{i=1}^m R z_i$.  Then each $z_i \in N_{t_i}$ for some $i$.  Let $s = \max \{ t_i \mid 1\leq i \leq m\}$. Then every $z_i \in N_s$, whence $M \subseteq N_s$. 
Since $\cl$ preserves containment in submodules and ambient modules, we have \[
(\ann_MI)^{\cl}_M \subseteq (\ann_MI)^{\cl}_{N_s} \subseteq (\ann_{N_s}I)^{\cl}_{N_s}.
\]
Hence, \begin{align*}
\bigcap_{t \geq 0} (J_t : (J_t :I)^{\cl}_R) &\subseteq (J_s : (J_s:I)^{\cl}_R) = \ann_R((\ann_{R/J_s}I)^{\cl}_{R/J_s}) \\
&= \ann_R((\ann_{N_s}I)^{\cl}_{N_s}) \subseteq \ann_R((\ann_MI)^{\cl}_M).
\end{align*}
But the intersection of all such ideals has already been shown to be equal to $(\alpha_f)^\dual(I)$.

We have shown that $\bigcap_t (J_t : (J_t :I)^c_R) \subseteq (\alpha_f)^\dual(I)$.  But the left hand side contains $\bigcap_{\lambda(R/J)<\infty} (J : (J:I)^c_R)$ since each $J_t$ has finite colength. This shows that the last containment of the first part of the theorem is an equality for approximately Gorenstein rings.
\end{proof}

Note that the above result already has consequences beyond that of  \cite[Theorem 5.5]{nmeRG-cidual}, even in the case where $I=R$.

\begin{cor}\label{cor:irredseq}
Let $(R,\m,k)$ be a complete approximately Gorenstein Noetherian local ring.  Let $\cl$ be a functorial and residual closure operation.  Let $\{J_t\}_{t\in \N}$ be a sequence of irreducible $\m$-primary ideals cofinal with the powers of $\m$ and $\cl$-closed.  Then for every ideal $I$ of $R$, we have $I^\cl = I$, and for every finitely generated $R$-module $M$, we have $0^\cl_M = 0$.
\end{cor}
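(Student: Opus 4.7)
The plan is to prove the module statement $0^\cl_M = 0$ for all finitely generated $M$ first, then deduce the ideal statement by residuality. The key structural input is that each $R/J_t$ is an Artinian Gorenstein local ring (since $J_t$ is an irreducible $\m$-primary ideal), hence self-injective, which lets one embed any finitely generated $R/J_t$-module into a finite free $R/J_t$-module. Krull's intersection theorem, combined with cofinality of $\{J_t\}$ with the powers of $\m$, then cleans up.

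First I would observe that by residuality, the hypothesis $J_t^\cl = J_t$ is equivalent to $0^\cl_{R/J_t} = 0$. Functoriality of $\cl$ applied to the coordinate projections $(R/J_t)^n \twoheadrightarrow R/J_t$ forces $0^\cl_{(R/J_t)^n} = 0$ for every $n$. Next, since $R/J_t$ is Artinian Gorenstein local, we have $R/J_t \cong E_{R/J_t}(k)$; for any finitely generated $R/J_t$-module $N$, its socle is a finite-dimensional $k$-vector space of some dimension $n$ and is essential in $N$, so the inclusion $\soc(N) \hookrightarrow N$ extends to an embedding $N \hookrightarrow E_{R/J_t}(\soc N) = (R/J_t)^n$. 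Functoriality of $\cl$ applied to this embedding gives $0^\cl_N = 0$ for every finitely generated $R/J_t$-module $N$.

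Now let $M$ be an arbitrary finitely generated $R$-module. For each $t$, the quotient $M/J_tM$ is a finitely generated $R/J_t$-module, so by the previous step $0^\cl_{M/J_tM} = 0$. Functoriality of $\cl$ applied to the surjection $M \twoheadrightarrow M/J_tM$ sends $0^\cl_M$ into $0^\cl_{M/J_tM} = 0$, so $0^\cl_M \subseteq J_tM$ for every $t$. Since $\{J_t\}$ is cofinal with $\{\m^n\}$, Krull's intersection theorem yields $\bigcap_t J_tM = \bigcap_n \m^n M = 0$, giving $0^\cl_M = 0$. For any ideal $I$, applying this to $M = R/I$ and then invoking residuality gives $I^\cl/I = 0^\cl_{R/I} = 0$, i.e., $I^\cl = I$.

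The main obstacle is really the one structural step in Step 2: the embedding of a finitely generated $R/J_t$-module into a finite free $R/J_t$-module, which is the only place the full force of the hypothesis on $J_t$ (irreducibility, so that $R/J_t$ is Artinian Gorenstein and hence self-injective) is used. Everything else is formal manipulation with functoriality, residuality, and Krull's intersection theorem. Note that while Theorem~\ref{thm:finintideal} already computes $(\alpha_f)^\dual(I)$ in terms of the data $(J_t:(J_t:I)^\cl)$, the conclusion here is instead about $\cl$ itself on arbitrary finitely generated modules, so a direct module-theoretic argument is the most efficient route.
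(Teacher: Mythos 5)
Your proof is correct, and it takes a genuinely different route from the paper's. The paper deduces the finite-length case from Theorem~\ref{thm:finintideal}: it computes $(\alpha_f)^\dual(R)=\bigcap_t(J_t:J_t^\cl)=R$ and uses the chain of equalities in that theorem to identify this with $\bigcap_{\len(M)<\infty}\ann_R(0^\cl_M)$, so that $1$ annihilates $0^\cl_M$ for every finite-length $M$; it then passes to arbitrary finitely generated $M$ by observing that each $\m^sM$ is $\cl$-closed and invoking Krull intersection. You instead bypass the Matlis-duality machinery entirely: residuality turns $J_t^\cl=J_t$ into $0^\cl_{R/J_t}=0$, functoriality along the coordinate projections propagates this to free $R/J_t$-modules, and the self-injectivity of the Artinian Gorenstein ring $R/J_t$ (via the essential socle and the isomorphism $R/J_t\cong E_{R/J_t}(k)$) lets you embed any finitely generated $R/J_t$-module into a finite free one, giving $0^\cl_N=0$ for such $N$; then functoriality applied to $M\onto M/J_tM$ pins $0^\cl_M$ inside $\bigcap_t J_tM=0$. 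The two arguments do the same Krull-intersection cleanup at the end, and both really use irreducibility of $J_t$ in the same place (to get an Artinian Gorenstein quotient), but yours is self-contained and more elementary since it never invokes Theorem~\ref{thm:finintideal} or the $\dual$ operator, at the mild cost of needing the classical fact that a $0$-dimensional Gorenstein local ring is self-injective. One cosmetic point: the identification $E_{R/J_t}(\soc N)\cong(R/J_t)^n$ is an isomorphism rather than an equality, and the embedding $N\hookrightarrow E_{R/J_t}(\soc N)$ comes from injectivity of $E_{R/J_t}(\soc N)$ together with essentiality of $\soc N$ in $N$; you state this correctly in spirit, but a referee might want the word ``essential'' emphasized there.
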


\begin{proof}
Let $\alpha$ be the preradical associated to $\cl$, and $\alpha_f$ the preradical associated to $\cl_f$.  Then by Theorem~\ref{thm:finintideal}, we have \begin{align*}
(\alpha_f)^\dual(R) &= \bigcap_t (J_t : J_t^\cl) = \bigcap_t (J_t : J_t) = R \\
&= \bigcap_{\len(M) < \infty} \ann_R(0^\cl_M) = \bigcap_{\len(R/J) < \infty} (J : J^\cl).
\end{align*}
Hence, $0$ is $\cl$-closed in every finite-length module (since $1\in R = $ the common annihilator of their closures), and similarly every finite colength ideal is $\cl$-closed.

Now let $M$ be an arbitrary finitely generated $R$-module.  Then for any positive integer $s$, we have that $M / \m^s M$ is finite length.  Hence by the residual property, we have \[
(\m^s M)^\cl_M / \m^s M = 0^\cl_{M / \m^s M} = 0.
\]
That is, $\m^s M$ is a $\cl$-closed submodule of $M$.  But by the Krull intersection theorem, $0 = \bigcap_{s\in \N} \m^s M$.  Hence $0$, being an intersection of $\cl$-closed submodules of $M$, is itself $\cl$-closed (cf. \cite[Proposition 2.1.3]{nme-guide2}, where the result is stated for ideals but whose proof extends immediately to modules).

Finally, let $I$ be an arbitrary ideal.  Then $R/I$ is a finitely generated $R$-module, so by the above, we have $0 = 0^\cl_{R/I} = I^\cl / I$, whence $I^\cl = I$.
\end{proof}

The above is especially interesting in the Gorenstein case, so we state it separately as follows.

\begin{cor}\label{cor:Gorsop}
Let $(R,\m,k)$ be a complete Gorenstein Noetherian local ring.  Let $\cl$ be a functorial and residual closure operation.  Suppose there is some system of parameters ${\mathbf x} := x_1, \ldots, x_d$ such that the ideals $J_t := (x_1^t, \ldots, x_d^t)$ are $\cl$-closed for infinitely many $t\in \N$.  Then for every ideal $I$ of $R$, we have $I^\cl = I$, and for every finitely generated $R$-module $M$, we have $0^\cl_M = 0$.
\end{cor}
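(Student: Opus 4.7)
The plan is to reduce this directly to Corollary~\ref{cor:irredseq}. To invoke that result I need to produce a sequence $\{J_{t_n}\}$ of irreducible $\m$-primary ideals that are $\cl$-closed and cofinal with the powers of $\m$, and I need to know that $R$ is approximately Gorenstein.

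First I would verify approximate Gorensteinness and irreducibility for free from the Gorenstein hypothesis: in a Gorenstein local ring, every ideal generated by a system of parameters is irreducible (this is the classical fact that the socle of $R/(x_1,\ldots,x_d)$ is one-dimensional when $R$ is Gorenstein), so in particular each $J_t = (x_1^t,\ldots,x_d^t)$ is irreducible. The same family of ideals exhibits $R$ as approximately Gorenstein. Next I would check cofinality with powers of $\m$: since $x_i\in\m$ we have $J_t\subseteq \m^t$, and since $J_t$ is $\m$-primary there is, for each $t$, some $s$ with $\m^s\subseteq J_t$.

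Now let $T\subseteq\N$ be the infinite set of $t$ for which $J_t$ is $\cl$-closed, and let $\{t_n\}_{n\in\N}$ be the strictly increasing enumeration of $T$. Then $\{J_{t_n}\}$ is still a decreasing nested sequence of $\m$-primary irreducible ideals, each $\cl$-closed, and still cofinal with the powers of $\m$ (because for any $s$, infinitely many of the $J_t$ sit inside $\m^s$, so some $J_{t_n}$ does).

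Finally, I would apply Corollary~\ref{cor:irredseq} with this subsequence to conclude that $I^\cl=I$ for every ideal $I$ of $R$ and $0^\cl_M=0$ for every finitely generated $R$-module $M$. No real obstacle is expected: the entire content of this corollary is the observation that the Gorenstein hypothesis, together with the assumed $\cl$-closedness of \emph{some} infinite subfamily of parameter powers, automatically supplies the irreducibility, cofinality, and approximate-Gorenstein inputs demanded by the preceding corollary.
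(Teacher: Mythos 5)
Your proof is correct and follows essentially the same route as the paper's: invoke the Gorenstein hypothesis (via the irreducibility of parameter ideals) to check that the infinitely many $\cl$-closed $J_t$ give the data needed for Corollary~\ref{cor:irredseq}. You spell out a few details the paper leaves implicit — that Gorenstein rings are approximately Gorenstein, that the subsequence is still decreasing and cofinal — but these are exactly the same checks, just made explicit.
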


\begin{proof}
Recall \cite[Theorem 18.1]{Mats} that in this case, any ideal generated by a full system of parameters is irreducible.  Hence some subsequence of the ideals $J_t$ satisfies the conditions of Corollary~\ref{cor:irredseq}.
\end{proof}

Hence, we obtain a unified proof of the following results, which previously seemed to require completely different proofs:

\begin{cor}\label{cor:Fproperties}
Let $(R,\m,k)$ be a complete Gorenstein Noetherian local ring.  \begin{enumerate}
    \item\ \cite[Remark 3.8, in char $p$ only]{EneHo-structure} If $R$ is equicharacteristic and $F$-rational, then it is $F$-regular.
    \item\ {\cite[Lemma 3.3]{Fe-Fpure}} If $R$ is of prime characteristic $p>0$ and $F$-injective, then it is $F$-pure.
\end{enumerate}
\end{cor}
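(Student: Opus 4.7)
The plan is to apply Corollary~\ref{cor:Gorsop} with two different choices of closure operation, reducing each of the two statements to showing (i) the chosen closure is functorial and residual, (ii) the hypothesis gives a cofinal sequence of $\cl$-closed parameter ideals, and (iii) the resulting conclusion (all ideals are $\cl$-closed) translates back into the desired ring-theoretic property.

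For part (1), I would take $\cl = {}^*$, the tight closure operation. Tight closure is well-known to be a functorial residual closure operation on finitely generated modules over a Noetherian ring of equal characteristic (using reduction mod $p$ in characteristic zero). The hypothesis that $R$ is $F$-rational means that every ideal generated by a system of parameters is tight-closed; in particular, for a fixed full s.o.p.\ $x_1,\dots,x_d$, the ideals $J_t = (x_1^t,\dots,x_d^t)$ are tight-closed for every $t$. Corollary~\ref{cor:Gorsop} then yields $I^* = I$ for every ideal $I$, i.e.\ $R$ is weakly $F$-regular. The last step is to invoke the classical fact (due to Hochster--Huneke) that a Gorenstein weakly $F$-regular local ring is $F$-regular, since in the Gorenstein case tight closure commutes with localization.

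For part (2), I would take $\cl = {}^F$, Frobenius closure, which is manifestly functorial and residual. Since $R$ is Gorenstein (hence Cohen--Macaulay), the $F$-injective hypothesis is equivalent to Frobenius acting injectively on $H^d_\m(R)$, and via the direct limit description of local cohomology on a s.o.p., this is in turn equivalent to $(x_1,\dots,x_d) = (x_1,\dots,x_d)^F$ for one (hence every) full s.o.p. Iterating with $x_1^t,\dots,x_d^t$ gives the cofinal sequence $J_t$ of Frobenius-closed irreducible $\m$-primary ideals required by Corollary~\ref{cor:Gorsop}, so every ideal of $R$ is Frobenius-closed. Finally, since a complete Gorenstein local ring is approximately Gorenstein, Hochster's criterion gives that $R$ is $F$-pure iff $0^F_{E_R(k)} = 0$, which is equivalent to $J^F = J$ for a cofinal system of irreducible $\m$-primary ideals, a condition we have just verified.

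The main obstacle, and the only nontrivial part of the argument beyond quoting Corollary~\ref{cor:Gorsop}, is the translation between the hypotheses/conclusions stated in terms of Frobenius actions on local cohomology or purity and the conditions stated in terms of closure of parameter ideals. Both translations are standard consequences of the approximately Gorenstein structure of complete Gorenstein local rings together with the limit descriptions of $H^d_\m(R)$ and $E_R(k)$, so the real content of the unification is the common combinatorial skeleton supplied by Corollary~\ref{cor:Gorsop}: once one knows cofinally many irreducible parameter ideals are $\cl$-closed, closure of \emph{all} ideals follows, and this single mechanism drives both the $F$-rational $\Rightarrow$ $F$-regular and $F$-injective $\Rightarrow$ $F$-pure implications.
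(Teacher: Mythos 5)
Your argument is correct and follows the same route as the paper's: in each part, translate the hypothesis (via the definition of $F$-rationality, resp.\ the Fedder--Watanabe characterization of $F$-injectivity for Cohen--Macaulay rings) into the cofinal-parameter-ideal hypothesis of Corollary~\ref{cor:Gorsop}, apply that corollary, and then read off the conclusion. The only difference is that you spell out two translation steps that the paper's terse proof leaves implicit: passing from weak $F$-regularity (all ideals tightly closed) to $F$-regularity via the Gorenstein case, and passing from ``the irreducible parameter ideals $J_t$ are Frobenius closed'' to $F$-purity via Hochster's approximately Gorenstein criterion for cyclic purity; these are precisely the steps the paper takes for granted as standard, so the content matches.
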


\begin{proof}
For the first item, note that the definition of $F$-rational is that ideals generated by systems of parameters are tightly closed.
For the second item, 
if a ring is Cohen-Macaulay, it is $F$-injective if and only if any ideal generated by a system of parameters is Frobenius closed \cite[Remark 1.9]{FeWa-Freg}.

Both results now follow from Corollary \ref{cor:Gorsop}.
\end{proof}

It happens that the assumption of Gorensteinness is crucial in Corollary~\ref{cor:Gorsop}.  To see this, we look at the special case where $R$ is Cohen-Macaulay, and the closure operation in question is $\cl_{\omega_R}$.
In that case, \emph{every} ideal generated by a system of parameters is $\cl_{\omega_R}$-closed, yet $\cl_{\omega_R}$ is \emph{never} trivial unless $R$ is Gorenstein:

\begin{lemma}
Let $R$ be a complete Cohen-Macaulay Noetherian local ring, let $\omega_R$ be its canonical modules, and let $\cl = \cl_{\omega_R}$.  Then for any ideal $I$ generated by part of a system of parameters, we have $I^\cl=I$.
\end{lemma}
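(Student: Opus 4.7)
The plan is to reduce the lemma to the faithfulness of the canonical module over the quotient ring $R/I$. First, I would unwind the definition of the module closure $\cl_{\omega_R}$: for any $R$-module $W$ and ideal $J \subseteq R$, the definition gives $J^{\cl_W} = (JW :_R W) = \ann_R(W/JW)$. Applied to $W = \omega_R$, this reduces the claim $I^{\cl_{\omega_R}} = I$ to the statement $\ann_R(\omega_R/I\omega_R) = I$.

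Next I would use the hypotheses on $I$. Since $R$ is Cohen-Macaulay and $x_1, \ldots, x_k$ is part of a system of parameters, it is an $R$-regular sequence; and since $\omega_R$ is maximal Cohen-Macaulay, it is also an $\omega_R$-regular sequence. The standard behavior of canonical modules under regular quotients (Bruns-Herzog, Theorem 3.3.5) then yields an isomorphism $\omega_R/I\omega_R \cong \omega_{R/I}$, with $R/I$ itself Cohen-Macaulay of dimension $d-k$.

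The final step is to invoke the faithfulness of the canonical module over a Cohen-Macaulay local ring, namely $\ann_{R/I}(\omega_{R/I}) = 0$. One way to see this is to note that $\Ass(\omega_{R/I}) = \Assh(R/I) = \Ass(R/I)$ (the second equality using CM-ness), and that at each minimal prime $\p$ of $R/I$ the localization $(R/I)_\p$ is Artinian Gorenstein, so $(\omega_{R/I})_\p$ is the injective hull of its residue field and is therefore nonzero; hence $\ann_{R/I}(\omega_{R/I})$ vanishes after localizing at each minimal prime, and since $\Ass(R/I) = \Min(R/I)$ a standard non-zerodivisor argument forces the annihilator itself to be $0$. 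Pulling back to $R$ gives $\ann_R(\omega_R/I\omega_R) = \ann_R(\omega_{R/I}) = I$, finishing the proof.

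There is no serious obstacle here; the argument is just a recognition of the module closure as a colon ideal followed by well-known structure theorems for $\omega_R$. The only bookkeeping step worth being careful about is that $\ann_R(\omega_{R/I})$ genuinely pulls back to $I$ and not just to some ideal containing $I$ — but since $\ann_{R/I}(\omega_{R/I}) = 0$, the preimage in $R$ is exactly the kernel of $R \twoheadrightarrow R/I$, which is $I$.
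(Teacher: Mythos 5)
Your proof is correct, and it takes a genuinely different route from the paper's. You argue directly: identify $I^{\cl_{\omega_R}}$ as $\ann_R(\omega_R/I\omega_R)$, invoke the fact (Bruns--Herzog 3.3.5) that a regular sequence on $R$ and on $\omega_R$ induces $\omega_R/I\omega_R \cong \omega_{R/I}$, and then use faithfulness of the canonical module over the CM ring $R/I$. The paper instead handles the case of a \emph{full} system of parameters first (using $\omega_R/J\omega_R \cong E_{R/J}(k)$ and faithfulness of the injective hull over an Artinian ring), and then gets the general partial-s.o.p.\ case by writing $I$ as an intersection $\bigcap_s J_s$ of parameter ideals and noting that an intersection of $\cl$-closed ideals is $\cl$-closed. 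Your approach buys a one-step argument with no reduction to the Artinian case, at the cost of needing faithfulness of $\omega_S$ in arbitrary dimension; the paper's approach needs only the Artinian case of faithfulness (where it is elementary) but pays with the intersection trick. Both are legitimate. One small inaccuracy in your write-up: you describe $(R/I)_\p$ at a minimal prime $\p$ as ``Artinian Gorenstein,'' but localizations of a Cohen--Macaulay ring at minimal primes need not be Gorenstein; fortunately your argument only uses that they are Artinian local (so that the canonical module is the injective hull of the residue field, which is faithful), so the conclusion is unaffected. Alternatively, faithfulness of $\omega_{R/I}$ follows at once from $\Hom_{R/I}(\omega_{R/I},\omega_{R/I}) \cong R/I$, which sidesteps the associated-prime bookkeeping entirely.
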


\begin{proof}
Let $\bfx$ be a full system of parameters.  Let $J=(\bfx)$.  Then by \cite[Theorem 3.3.4(a)]{BH}, $\omega_R / J \omega_R \cong E_{R/J}(k)$, which by \cite[Proposition 3.2.12]{BH} is a faithful $(R/J)$-module. Let $a\in J^\cl = \ann_R(\omega_R / J \omega_R)$.  Then in $R/J$, we have $\bar{a} \in \ann_{R/J}(\omega_R / J \omega_R) = \ann_{R/J}(E_{R/J}(k))= \bar{0}$ in $R/J$.  Hence, $a\in J$.
 
Now take an arbitrary parameter ideal $I = (x_1,\ldots, x_t)$.  Complete to a full system of parameters $x_1, \ldots, x_t, x_{t+1}, \ldots, x_d$.  Then for every positive integer $s$, we have that the ideal $J_s := I + (x_{t+1}^s, \ldots, x_d^s)$ is generated by a full system of parameters.  But $I = \bigcap_s J_s$, and every $J_s$ is $\cl$-closed by the first paragraph of the proof. Hence $I$, being an intersection of $\cl$-closed ideals, is itself $\cl$-closed.
\end{proof}

On the other hand, by \cite[Lemma 2.1]{HHS-tracecan} and the proof of \cite[Corollary 3.18]{PeRG}, we have that that $\cl_{\omega_R}$ is trivial if and only if $R$ is Gorenstein, even though by the above, this closure is always trivial on \emph{parameter} ideals.

\section{Many module and algebra closures are Nakayama}\label{sec:manyNAK}

In this section we prove that certain module and algebra closures are Nakayama closures, so that the results of this paper apply to them.

\begin{thm}\label{thm:fgNak}
Let $(R,\m) \rightarrow (S,\n)$ be a local homomorphism of Noetherian local rings, and let $B$ be a finitely generated $S$-module.  Let $\cM$ be the class of finitely generated $R$-modules, and consider the closure operation $\cl := \cl_B$ on $\cM$.  Then $\cl$ is a Nakayama closure.

In particular, this holds when $B$ is either a finitely generated $R$-module (i.e. the case $R=S$) or any Noetherian local $R$-algebra (i.e. the case $S=B$).
\end{thm}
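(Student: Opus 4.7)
My plan is to reduce the claim, via passage to a quotient and tensoring with $B$, to a single application of Nakayama's lemma over $S$. By idempotence of $\cl_B$ and the inclusion $L \subseteq N$, proving $L^{\cl_B}_M = N^{\cl_B}_M$ is equivalent to proving $N \subseteq L^{\cl_B}_M$. First I would pass to the quotient: set $P := M/L$, let $\pi : M \onto P$ be the projection, and write $\bar N := \pi(N) = N/L$. Using the defining property $K^{\cl_B}_M = \ker(M \to B \otimes_R (M/K))$ together with the canonical isomorphism $M/(L + \m N) \cong P / \m \bar N$, one verifies that $x \in (L + \m N)^{\cl_B}_M$ if and only if $\pi(x) \in (\m \bar N)^{\cl_B}_P$; in other words, $\cl_B$ is residual and commutes with this quotient. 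Under this correspondence the hypothesis translates to $\bar N \subseteq (\m \bar N)^{\cl_B}_P$ and the target to $\bar N \subseteq 0^{\cl_B}_P$.

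Next I would tensor with $B$ over $R$ and let $\tilde N \subseteq Q := B \otimes_R P$ denote the image of the natural map $B \otimes_R \bar N \to Q$; this is a $B$-submodule of $Q$. Choose $R$-module generators $\bar x_1, \dotsc, \bar x_m$ of $\bar N$; the elements $1 \otimes \bar x_i$ then generate $\tilde N$ as a $B$-module, exhibiting $\tilde N$ as a $B$-quotient of $B^m$. Because $B$ is finitely generated as an $S$-module by hypothesis, $\tilde N$ is thereby finitely generated as an $S$-module.

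The reduced hypothesis $\bar N \subseteq (\m \bar N)^{\cl_B}_P$ unpacks to say that each $1 \otimes \bar x$ (for $\bar x \in \bar N$) lies in the image of $B \otimes_R \m \bar N \to Q$, which is straightforwardly seen to equal $\m \tilde N$. Since $R \to S$ is local, $\m S \subseteq \n$, and hence $\m \tilde N \subseteq \n \tilde N$. In particular each generator $1 \otimes \bar x_i$ of $\tilde N$ lies in $\n \tilde N$, so $\tilde N = \n \tilde N$. Applying Nakayama's lemma to the finitely generated $S$-module $\tilde N$ over $(S,\n)$ then forces $\tilde N = 0$, so $1 \otimes \bar x = 0$ in $B \otimes_R P$ for every $\bar x \in \bar N$, i.e., $\bar N \subseteq 0^{\cl_B}_P$, as required. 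The two ``in particular'' clauses follow by specializing to $S = R$ and to $S = B$ respectively.

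The place where care is most needed is the initial reduction to $P = M/L$---verifying the residual identification of $(L + \m N)^{\cl_B}_M$ with $(\m \bar N)^{\cl_B}_P$ under $\pi$---but this is routine from the tensor-product definition of $\cl_B$. After that, the argument is a one-shot Nakayama application, and the hypothesis that $B$ is finitely generated \emph{as an $S$-module} (rather than merely as an $R$-module) is used precisely to guarantee that $\tilde N$ inherits finite generation over $S$, which is exactly what makes the Nakayama step valid.
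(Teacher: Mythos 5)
Your proof is correct and follows essentially the same route as the paper's: reduce to the case $L=0$ by residuality, tensor with $B$ to obtain the image submodule, observe that the hypothesis forces it to equal $\m$ (hence $\n$) times itself, and conclude by Nakayama's lemma applied over $(S,\n)$. The only cosmetic difference is that you show the image module is finitely generated over $S$ directly (via the generators $1\otimes\bar x_i$ and finite generation of $B$ over $S$), whereas the paper obtains this from Noetherianity of $S$ applied to the finitely generated $S$-module $M\otimes_R B$.
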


\begin{proof}
Since $\cl$ is residual, it is enough to show that for any $L \subseteq M \in \cM$, if $L \subseteq (\m L)^\cl_M$, we have $L \subseteq 0^\cl_M$.  Accordingly suppose $L \subseteq (\m L)^\cl_M$.  Let $H$ be the image of the induced map $L \otimes_R B \rightarrow M \otimes_R B$.  By the assumption, $H$ is in fact contained in the image of $\m L \otimes_R B \rightarrow M \otimes_R B$.  But any element of the latter is of the form \[
\sum_{j=1}^t (m_j x_j) \otimes b_j = \sum_{j=1}^t m_j (x_j \otimes b_j),
\]
with $m_j \in \m$, $x_j \in L$, and $b_j \in B$.  But the latter representation of such an element is clearly in $\m H$, since each $x_j \otimes b_j$ is in $H$.  This in turn is contained in $\n H$ since $\m S \subseteq \n$ and $H$ is an $S$-module.

Hence, $H \subseteq \n H$.  But $H$ is a submodule of $M \otimes_R B$, which is a finitely generated $S$-module.  Since $S$ is Noetherian, it follows that $H$ is itself a finitely generated  $S$-module.  But then the Nakayama lemma (applied to $H$ as a finitely generated $S$-module) implies that $H=0$.  In other words, $L \subseteq 0_M^\cl$.
\end{proof}

To go further, we recall the direct limit scaffolding from \cite{nmeRG-cidual}. Let $\Gamma$ be a directed set.  Let $\{\alpha_i \mid i \in \Gamma\}$ be a directed system of submodule selectors. Then $\alpha := \varinjlim \alpha_i$, defined by $\alpha(M) := \bigcup_{i \in \Gamma} \alpha_i(M)$ \cite[Definition 7.1]{nmeRG-cidual}, is a submodule selector as well.  Moreover, \cite[Proposition 7.2]{nmeRG-cidual} if each of the $\alpha_i$ arises from a functorial residual closure operation, then so does $\alpha$.

In particular, we have the following for algebra closures, which is implicitly used for example in \cite{RG-bCMsing} and \cite{PeRG}, but not explicitly stated there.

\begin{lemma}
Let $\{A_i\}_{i \in I}$ be a direct limit system of $R$-algebras with $R$-algebra homomorphisms, with direct limit $A$. Then $\sum_{i \in I} \cl_{A_i}$ is a closure operation and in fact is equal to $\cl_A$.
\end{lemma}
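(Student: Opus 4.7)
My plan is to proceed in two stages: first verify that $\sum_{i \in I} \cl_{A_i}$ is a well-defined closure operation by invoking the direct limit scaffolding recalled immediately before the lemma, and then show equality with $\cl_A$ by a pair of opposing inclusions.

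For the first stage, I would observe that for $i \le j$, the transition map $\phi_{ij}: A_i \to A_j$ yields $\cl_{A_i} \le \cl_{A_j}$: if $x \in L^{\cl_{A_i}}_M$, i.e.\ $\bar x \otimes 1 = 0$ in $(M/L) \otimes_R A_i$, then applying $\mathrm{id} \otimes \phi_{ij}$ gives $\bar x \otimes 1 = 0$ in $(M/L) \otimes_R A_j$. Thus $\{\cl_{A_i}\}$ is a directed system of functorial, residual closure operations, and by \cite[Proposition 7.2]{nmeRG-cidual} the submodule selector $N \mapsto \bigcup_i 0^{\cl_{A_i}}_N = \sum_i 0^{\cl_{A_i}}_N$ arises from a functorial residual closure operation, which one identifies with $\sum_i \cl_{A_i}$.

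For $\sum_i \cl_{A_i} \le \cl_A$, each structure map $A_i \to A$ yields $\cl_{A_i} \le \cl_A$ by the same argument as above, and summing preserves the inequality. For the reverse direction, the main input is that tensor product commutes with filtered colimits, giving $(M/L) \otimes_R A \cong \varinjlim_i (M/L) \otimes_R A_i$. If $x \in L^{\cl_A}_M$, then $\bar x \otimes 1 = 0$ in $(M/L) \otimes_R A$; fixing any $i$, the element $\bar x \otimes 1 \in (M/L) \otimes_R A_i$ has zero image in the colimit, so by the standard characterization of vanishing in a direct limit of modules over a directed index set, there exists $j \ge i$ such that $\bar x \otimes 1 = 0$ already in $(M/L) \otimes_R A_j$. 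Hence $x \in L^{\cl_{A_j}}_M \subseteq \sum_i L^{\cl_{A_i}}_M$.

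The main---and really only---obstacle is this final ``vanishes at a finite stage'' step, which requires directedness of the index set: for an arbitrary, non-directed family of $R$-algebras the sum $\sum_i \cl_{A_i}$ would generally fail to agree with the closure attached to any single colimit algebra, and it is precisely the directedness that allows one to pull a vanishing relation back from $\varinjlim_i (M/L)\otimes_R A_i$ to a single stage.
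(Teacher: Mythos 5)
Your argument is correct and follows essentially the same route as the paper: one containment from the structure maps $A_i \to A$, the other from the fact that a tensor element vanishing in the filtered colimit $\varinjlim_i ((M/L) \otimes_R A_i) \cong (M/L) \otimes_R A$ already vanishes at some finite stage. The only organizational difference is that you first invoke \cite[Proposition 7.2]{nmeRG-cidual} to check that $\sum_i \cl_{A_i}$ is a closure operation before proving equality, whereas the paper proves the equality directly and then observes that being a closure operation follows for free (since $\cl_A$ already is one); your extra step is harmless but unnecessary.
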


\begin{rem}
Using the notation of \cite{nmeRG-cidual}, we may also refer to this closure as $\varinjlim \cl_{A_i}$.
\end{rem}

\begin{proof}
We check equality, which is enough to show that $\cl:=\sum_{i \in I} \cl_{A_i}$ is a closure operation. First, note that each $A_i$ has an $R$-algebra map to $A$, so $\cl_{A_i} \le \cl_A$ for all $i \in I$ \cite[Proposition 3.6]{RG-bCMsing}. This gives the containment $\cl \le \cl_A$. For the other containment, let $M$ be an $R$-module, and $u \in 0_M^{\cl_A}$. This means that $1 \otimes u =0$ in $A \otimes_R M$. Since $A$ is the direct limit of the $A_i$, there must exist $i \in I$ such that $1 \otimes u=0$ in $A_i \otimes_R M$, which implies that $u \in 0_M^{\cl_{A_I}} \subseteq 0_M^{\cl}$. Since all of the closures in question are residual, this proves the result.
\end{proof}

\begin{prop}\label{prop:Naklimit}
Let $(R,\m)$ be a Noetherian local ring. Let $\Gamma$ be a directed poset. Let $\{\cl_j\}_{j \in \Gamma}$ be a directed set of residual functorial Nakayama closure operations (that is, $\Gamma$ is a directed set, and whenever $j \leq j'$, we have $\cl_j \leq \cl_{j'}$). Then $\cl := \displaystyle \lim_{\overset{\rightarrow}{j \in \Gamma}} \cl_j$, provided it is idempotent (and hence a closure operation), is also residual, functorial, and Nakayama.
\end{prop}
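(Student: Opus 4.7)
The plan is to delegate the residual and functorial properties to the general direct-limit machinery already invoked in the paper, so that the only real content is the Nakayama property. Concretely, since $\Gamma$ is directed and $j \leq j'$ implies $\cl_j \leq \cl_{j'}$, the family $\{\cl_j\}$ is a directed system of residual functorial closure operations in the sense of \cite{nmeRG-cidual}. By \cite[Proposition 7.2]{nmeRG-cidual}, the associated submodule selector $\varinjlim \alpha_j$ (where $\alpha_j(M) = 0^{\cl_j}_M$) is again functorial and arises from a residual closure operation; combined with the standing assumption that $\cl$ is idempotent, this shows $\cl$ is a residual, functorial closure operation.

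For the Nakayama condition, I would fix $L \subseteq N \subseteq M$ in the class of finitely generated $R$-modules with $N \subseteq (L + \m N)^\cl_M$, and aim to show $N^\cl_M \subseteq L^\cl_M$ (the reverse containment being automatic from order-preservation). Since $R$ is Noetherian and $M$ is finitely generated, $N$ is finitely generated, say $N = R n_1 + \cdots + R n_s$. By definition of the direct limit, for each $i$ there is an index $j_i \in \Gamma$ with $n_i \in (L + \m N)^{\cl_{j_i}}_M$. Using directedness of $\Gamma$, pick $j_0 \in \Gamma$ dominating every $j_i$; then by the monotonicity $\cl_{j_i} \leq \cl_{j_0}$ we have $n_i \in (L + \m N)^{\cl_{j_0}}_M$ for all $i$, and hence $N \subseteq (L + \m N)^{\cl_{j_0}}_M$.

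To conclude, let $x \in N^\cl_M$. Then $x \in N^{\cl_{j'}}_M$ for some $j' \in \Gamma$. Choose $j'' \in \Gamma$ with $j'' \geq j', j_0$; then $x \in N^{\cl_{j''}}_M$ and simultaneously $L \subseteq N \subseteq (L + \m N)^{\cl_{j''}}_M$. Because $\cl_{j''}$ is a Nakayama closure on $\cM$, we get $L^{\cl_{j''}}_M = N^{\cl_{j''}}_M$, whence $x \in L^{\cl_{j''}}_M \subseteq L^\cl_M$. This is the desired inclusion.

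The main obstacle is, as usual with direct-limit arguments, the index bookkeeping: we must simultaneously absorb finitely many generators of $N$ and an arbitrary element of $N^\cl_M$ into a single index where the hypothesis of the Nakayama property for $\cl_{j''}$ is available. This is precisely where finite generation of $N$ (guaranteed by the Nakayama framework restricting to the category of finitely generated modules) and the directedness of $\Gamma$ combine; once both are in hand, the actual closure-theoretic step is just one invocation of the Nakayama property for a single member of the family.
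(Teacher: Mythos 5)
Your proof is correct and its core idea is the same as the paper's: use directedness of $\Gamma$ together with finite generation to locate a single index where the Nakayama hypothesis is witnessed, then invoke the Nakayama property for that one member of the family. The one real difference is that the paper first uses residuality of $\cl$ to reduce to the case $L=0$ (as is done explicitly in the proof of Theorem~\ref{thm:fgNak}); then the Nakayama check becomes the implication ``$L \subseteq (\m L)^{\cl}_M \implies L \subseteq 0^{\cl}_M$,'' and since the conclusion is a mere containment of $L$ in a closed module, a single domination step suffices: one index $j$ with $L \subseteq (\m L)^{\cl_j}_M$ gives $L \subseteq 0^{\cl_j}_M \subseteq 0^{\cl}_M$. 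You work with the general triple $L \subseteq N \subseteq M$ directly, which forces a second layer of direct-limit bookkeeping (for each $x \in N^{\cl}_M$ you must dominate $j_0$ by a further $j''$ with $x \in N^{\cl_{j''}}_M$). Your version is slightly longer but equally valid, and it has the mild virtue of not relying on residuality for the reduction — though residuality is already in your hypotheses and used to invoke Proposition~7.2 of the cited paper anyway, so nothing is gained in generality.
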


\begin{proof}
Residual and functorial follow from \cite[Proposition 7.2]{nmeRG-cidual}.

To see the Nakayama property, let $L \subseteq M$ be finitely generated $R$-modules, and assume $L \subseteq (\m L)_M^\cl$.  Let $z_1, \ldots, z_t$ be a generating set for $L$.  Then there exist $j_1, \ldots, j_t \in \Gamma$ with $z_i \in (\m L)_M^{\cl_{j_i}}$ for each $1\leq i \leq t$. Choose $j \in \Gamma$ with $j \geq j_i$ for $1\leq i \leq t$ (which exists since $\Gamma$ is a directed set).  Then each $z_i \in (\m L)_M^{\cl_j}$, whence we have $L \subseteq (\m L)_M^{\cl_j}$ since the $z_i$ generate $L$.  But then since $\cl_j$ is a Nakayama closure, we have $L \subseteq 0_M^{\cl_j} \subseteq 0_M^\cl$.
\end{proof}

Next we need a lemma that may be well known, but we don't know a reference so we include it and its proof for the reader's convenience.

\begin{lemma}\label{lem:dirlim}
Let $(R,\m)$ and $(B,\n)$ be (not necessarily Noetherian) local rings, and let $\phi: R \rightarrow B$ be a local homomorphism.  Then as an $R$-algebra, $B$ is the direct limit, via a directed indexing set $I$, of local $R$-algebras $B_i$ that are essentially of finite type over $R$, such that $\m B_i \neq B_i$ for each $i \in I$.
\end{lemma}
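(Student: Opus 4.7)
The plan is to realize $B$ as a directed union (inside itself) of certain local $R$-subalgebras essentially of finite type. I would index by the directed poset $I$ of finite subsets $S \subseteq B$, ordered by inclusion. For each $S \in I$, let $A_S$ be the $R$-subalgebra of $B$ generated by $\phi(R) \cup S$; as a quotient of $R[x_s : s \in S]$ it is finitely generated as an $R$-algebra. Set $\p_S := \n \cap A_S$ and $B_S := (A_S)_{\p_S}$. Because $\p_S$ is the contraction of a prime it is prime, and because $\phi$ is local it contains $\phi(\m) A_S$; hence $B_S$ is local, essentially of finite type over $R$, and the induced structure map $R \to B_S$ is local, so $\m B_S \subseteq \p_S B_S \neq B_S$.

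Next I would set up the transition system. For $S \subseteq T$ in $I$, the inclusion $A_S \hookrightarrow A_T$ contracts $\p_T$ to $\p_S$, so $A_S \setminus \p_S$ lands in $A_T \setminus \p_T$; by the universal property of localization this extends uniquely to a local $R$-algebra homomorphism $B_S \to B_T$, and these are transitive in $S \subseteq T \subseteq U$. Moreover, each $B_S$ embeds into $B$: every element of $A_S \setminus \p_S$ lies in $B \setminus \n$ and is therefore a unit in the local ring $B$, so $(A_S)_{\p_S} \hookrightarrow B$. The resulting embeddings $B_S \hookrightarrow B$ are compatible with the transitions $B_S \to B_T$, so $\varinjlim_{S \in I} B_S$ is identified with $\bigcup_{S \in I} B_S$ sitting inside $B$.

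To conclude, I would observe that every $b \in B$ lies in the subalgebra $A_{\{b\}}$, hence in $B_{\{b\}}$, so the union exhausts $B$ and thus $B \cong \varinjlim_{S \in I} B_S$. I do not expect a real obstacle; the argument is essentially bookkeeping. The one point worth care is the compatibility between the inclusions $A_S \hookrightarrow A_T \hookrightarrow B$, the contractions of primes along these inclusions, and the universal property producing the transition maps $B_S \to B_T$; together these force the colimit in the category of $R$-algebras to coincide with the subring of $B$ generated by all the $A_S$, which is $B$ itself.
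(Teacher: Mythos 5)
Your proof is correct and is essentially the same construction as the paper's: index by finite subsets of $B$, form the $R$-subalgebra generated by $\phi(R)$ and that subset, localize at the contraction of $\n$, and observe the resulting directed union exhausts $B$. If anything, you are slightly more explicit than the paper in verifying $\m B_S \neq B_S$ and in using the union-inside-$B$ argument to identify the colimit.
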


\begin{proof}
Let $G$ be the collection of all finite subsets of $B$.  Let $\Gamma$ be an indexing set in bijective correspondence with $G$.  For any $X \in G$, we write $X=X_i$ where $i\in \Gamma$ is the index corresponding to $X$.  We partially order $\Gamma$ such that $i \leq j$ if and only if $X_i \subseteq X_j$.  For each $i\in \Gamma$, set $A_i := \phi(R)[X_i]$,  i.e. the subring of $B$ generated by $\phi(R)$ and $X_i$. 
Then it is clear that if $i \leq j$, we have a natural corresponding $R$-algebra map $\mu_{ij}: A_i \ra A_j$, given by simple inclusion.  Now, for each $i\in \Gamma$, set $\n_i := A_i \cap \n$.  Let $B_i := (A_i)_{\n_i}$, and whenever $i\leq j$ define $\nu_{ij}: B_i \ra B_j$ by $\nu_{ij}(a/s) := a/s$.

To see that $\nu_{ij}$ is well-defined, note first that if $s\in A_i \setminus \n_i$, we have $s \notin \n$, whence $s \in A_j \setminus \n = A_j \setminus \n_j$.  Moreover, if $a/s = b/t$ in $B_i$, then there is some $u \in A_i \setminus \n$ with $uta=usb$.  But we have $u,t,s \in A_j \setminus \n_j$, whence $a/s=b/t$ in $B_j$.

Now for each $i$, define $\nu_i: B_i \rightarrow B$ in the same fashion.  That is, $\nu_i(a/s) = a/s$, which is well-defined for the same reasons as above.  Then it is elementary that the $B_i$, along with the maps $\nu_{ij}$ and $\nu_i$, form a direct limit system with direct limit of $B$.
\end{proof}

\begin{cor}\label{cor:localalgNak}
Let $(R,\m)$ be a Noetherian local ring, and let $(B,\n)$ be a local $R$-algebra such that $\m B \neq B$.  Then $\cl_B$ is a Nakayama closure on finitely generated $R$-modules.
\end{cor}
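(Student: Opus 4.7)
The strategy is to reduce to the Noetherian case already handled by Theorem~\ref{thm:fgNak} by writing $\cl_B$ as a directed limit of Nakayama closures, and then invoking Proposition~\ref{prop:Naklimit}. Observe first that since $B$ is local and $\m B \neq B$, necessarily $\m B \subseteq \n$. By Lemma~\ref{lem:dirlim}, we may express $B$ as the direct limit (indexed by some directed poset $\Gamma$) of local $R$-subalgebras $B_i$, each essentially of finite type over $R$ and each satisfying $\m B_i \neq B_i$.

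Since $R$ is Noetherian, each $B_i$ is Noetherian as well, being essentially of finite type over $R$. Writing $\n_i$ for the maximal ideal of $B_i$, the condition $\m B_i \neq B_i$ forces $\m B_i \subseteq \n_i$, so the structure map $R \to B_i$ is a local homomorphism of Noetherian local rings. Applying Theorem~\ref{thm:fgNak} in the case $S = B = B_i$ (viewed as a module over itself) shows that $\cl_{B_i}$ is a Nakayama closure on finitely generated $R$-modules; it is also residual and functorial by construction. Whenever $i \leq j$ in $\Gamma$, the inclusion $B_i \hookrightarrow B_j$ is an $R$-algebra homomorphism and therefore induces $\cl_{B_i} \leq \cl_{B_j}$, so $\{\cl_{B_i}\}_{i \in \Gamma}$ is a directed system of residual functorial Nakayama closures.

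By the direct-limit lemma for algebra closures preceding Proposition~\ref{prop:Naklimit}, we have $\cl_B = \varinjlim_{i \in \Gamma} \cl_{B_i}$, which in particular is a closure operation and hence idempotent. Proposition~\ref{prop:Naklimit} then immediately yields the Nakayama property for $\cl_B$.

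The only mildly subtle point is ensuring the structure maps $R \to B_i$ produced by Lemma~\ref{lem:dirlim} are local homomorphisms of Noetherian local rings so that Theorem~\ref{thm:fgNak} genuinely applies; this is precisely where the hypothesis $\m B \neq B$ is used, and once it is in hand the remainder of the argument is a direct assembly of results proved earlier in the section.
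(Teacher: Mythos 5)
Your proof is correct and follows essentially the same route as the paper's: express $B$ as a directed limit of essentially-finite-type local $R$-subalgebras $B_i$ via Lemma~\ref{lem:dirlim}, apply Theorem~\ref{thm:fgNak} (the case $S=B_i$) to each $B_i$, and then pass to the limit with Proposition~\ref{prop:Naklimit}. You supply a few details that the paper leaves implicit --- that $\m B \neq B$ forces $\m B \subseteq \n$ so the structure map is local, that each $\cl_{B_i}$ is residual and functorial, and that idempotence of the limit closure is furnished by the preceding lemma identifying $\varinjlim \cl_{B_i}$ with $\cl_B$ --- but the skeleton is the same.
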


\begin{proof}
Construct the direct limit system of $B_i$ as in Lemma~\ref{lem:dirlim}.  Since each $B_i$ is essentially of finite type over the Noetherian ring $R$, it is itself Noetherian.  Moreover, by construction the homomorphisms $(R,\m) \ra (B_i, \n_i)$ are local.  Then by Theorem~\ref{thm:fgNak}, each $\cl_{B_i}$ is a Nakayama closure over $R$.  By Proposition~\ref{prop:Naklimit}, $\cl_B$ is then Nakayama as well.
\end{proof}

\begin{thm}\label{thm:algclosuresareNak}
Let $(R,\m)$ be a Noetherian local ring.  Let $B$ be an $R$-algebra such that $\m B$ is contained in the Jacobson radical of $B$.  Then $\cl_B$ is a Nakayama closure on finitely generated $R$-modules.
\end{thm}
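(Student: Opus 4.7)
The plan is to mirror the proof of Theorem~\ref{thm:fgNak}, replacing the Noetherianness hypothesis on an ambient ring with the Jacobson-radical hypothesis on $\m B$. Since $\cl_B$ is a residual closure operation, the Nakayama condition for $\cl_B$ reduces to the following statement: whenever $L \subseteq M$ are \fg\ $R$-modules with $L \subseteq (\m L)^{\cl_B}_M$, one has $L \subseteq 0^{\cl_B}_M$. So I fix such $L \subseteq M$ and aim to prove this.

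Let $H$ be the image of the natural map $L \otimes_R B \ra M \otimes_R B$. The same tensor calculation appearing in the proof of Theorem~\ref{thm:fgNak} shows that $H$ lies inside the image of $\m L \otimes_R B \ra M \otimes_R B$, which in turn sits inside $\m H$; passing from $\m$ to $\m B$ via the $B$-module structure gives $H \subseteq (\m B)H$, and hence $(\m B) H = H$. The essential point is that, although $B$ need not be Noetherian, $H$ is still \emph{finitely generated as a $B$-module}: if $x_1, \dotsc, x_n$ generate $L$ over $R$, then $x_1 \otimes 1, \dotsc, x_n \otimes 1$ generate $L \otimes_R B$ as a $B$-module, and their images in $M \otimes_R B$ generate $H$. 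Since $\m B$ is contained in the Jacobson radical of $B$, the general form of Nakayama's lemma (for \fg\ modules over any commutative ring with an ideal inside the Jacobson radical) gives $H = 0$. By the definition of the algebra closure $\cl_B$, this is precisely the statement $L \subseteq 0^{\cl_B}_M$.

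There is no real obstacle, just a point of care: one must resist the temptation to handle this via the direct-limit machinery of Proposition~\ref{prop:Naklimit} and Corollary~\ref{cor:localalgNak}, since it is not transparent how to write a general $B$ with $\m B \subseteq J(B)$ as a filtered direct limit of local $R$-algebras $B_i$ with $\m B_i \ne B_i$ in a way compatible with Proposition~\ref{prop:Naklimit}. The direct Nakayama argument sketched above sidesteps this by working with a single \fg\ $B$-module $H$, and the only departure from Theorem~\ref{thm:fgNak} is that \fg-ness of $H$ is obtained directly from the \fg-ness of $L$ over $R$ rather than from Noetherianness of an ambient ring, while the role played by $\n$ in Theorem~\ref{thm:fgNak} is now played by the Jacobson radical of $B$.
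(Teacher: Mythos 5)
Your proof is correct, and it takes a genuinely different route from the one in the paper. The paper's proof of Theorem~\ref{thm:algclosuresareNak} proceeds by localizing $B$ at each of its maximal ideals $P$: since $B_P$ is a $B$-algebra, $\cl_B \le \cl_{B_P}$, and by construction $(B_P, P B_P)$ is a local $R$-algebra with $\m B_P \ne B_P$, so Corollary~\ref{cor:localalgNak} (itself relying on the direct-limit apparatus of Lemma~\ref{lem:dirlim} and Proposition~\ref{prop:Naklimit}) shows that each $\cl_{B_P}$ is Nakayama. One then deduces $z \otimes 1 = 0$ in $(M \otimes_R B)_P$ for every maximal $P$ and every $z\in L$, and concludes since vanishing is a local property. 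Your concern about how to fit a general $B$ with $\m B \subseteq J(B)$ into the direct-limit framework is thus moot---the paper does not take that route either---but your instinct to avoid it is sound, and your replacement is simpler. The crucial observation that $H$, as a quotient of $L \otimes_R B$, is automatically finitely generated over $B$ once $L$ is finitely generated over $R$, lets you run the Nakayama argument of Theorem~\ref{thm:fgNak} verbatim against the ideal $\m B \subseteq J(B)$, with no localization, no Noetherian hypothesis on any ambient ring, and no appeal to Corollary~\ref{cor:localalgNak}. What the paper's route buys is that Corollary~\ref{cor:localalgNak} is of independent interest and gets established along the way; what your route buys is a shorter, self-contained proof that makes Theorem~\ref{thm:algclosuresareNak} visibly a refinement of Theorem~\ref{thm:fgNak} rather than a consequence of a chain of intermediate results. (In fact, your observation that $H$ is a quotient of $L \otimes_R B$ would let the paper streamline the finiteness step in Theorem~\ref{thm:fgNak} as well.)
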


\begin{proof}
Let $L \subseteq M$ be finitely generated $R$-modules.  Suppose $L \subseteq (\m L)^{\cl_B}_M$.  Then for any maximal ideal $P$ of $B$, we have $\cl_B \leq \cl_{B_P}$ (since $B_P$ is a $B$-algebra), so $L \subseteq (\m L)^{\cl_{B_P}}_M$.  Since $\cl_{B_P}$ is Nakayama by Corollary~\ref{cor:localalgNak}, we have $L \subseteq 0^{\cl_{B_P}}_M$.  That is, for all maximal ideals $P$ of $B$, we have, for all $z\in L$, that $z \otimes 1 = 0$ in $M \otimes_R B_P = (M \otimes_R B)_P$.  Since vanishing is a local property, it follows that $z\otimes 1 = 0$ in $M \otimes_RB$ for all $z\in L$.  That is, $L \subseteq 0^{\cl_B}_M$.
\end{proof}

\begin{cor}\label{cor:F+bigNak}
Let $(R,m)$ be a Noetherian local ring.
Then Frobenius closure (in characteristic $p>0$) and plus closure (when $R$ is complete) are Nakayama closures, as is $\cl_B$ for any big \CM\ $R$-algebra $B$ that is local.
\end{cor}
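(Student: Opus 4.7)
The plan is to reduce all three assertions uniformly to Theorem~\ref{thm:algclosuresareNak}: each of the closures in question can be written as $\cl_A$ for an appropriate (often non-Noetherian) $R$-algebra $A$, so the task in each case is to verify that $\m A$ lies in the Jacobson radical of $A$.

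For a local big \CM\ $R$-algebra $B$ with unique maximal ideal $\n$ and local structure map $R \to B$ (the standard setup for local big \CM\ algebras over a local ring), the Jacobson radical of $B$ is $\n$ itself, and local-ness of $R \to B$ gives $\m B \subseteq \n$; Theorem~\ref{thm:algclosuresareNak} applies immediately. For plus closure I would assume $R$ is a complete local domain, so that plus closure equals $\cl_{R^+}$, where $R^+$ is the integral closure of $R$ in a fixed algebraic closure of $\Frac R$. Since $R$ is Henselian, a standard argument shows $R^+$ is quasi-local with unique maximal ideal $\m^+$ lying over $\m$, so $\m R^+ \subseteq \m^+$, which equals the Jacobson radical of $R^+$; Theorem~\ref{thm:algclosuresareNak} applies again.

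For Frobenius closure in characteristic $p>0$, the cleanest route is to identify $\cl_F$ with $\cl_{R^\infty}$, where $R^\infty := \varinjlim_e(R \xrightarrow{F} R)$ is the perfection of $R$. A quick check shows that a class $[r,e] \in R^\infty$ is a unit iff $r$ is a unit in $R$, so $R^\infty$ is quasi-local with maximal ideal $\m^\infty = \{[r,e] : r \in \m\}$ and $R \to R^\infty$ is local; Theorem~\ref{thm:algclosuresareNak} then finishes the job. (Alternatively, one can write $\cl_F = \varinjlim_e \cl_{F^e_* R}$, observe that each $\cl_{F^e_* R}$ is Nakayama by Theorem~\ref{thm:algclosuresareNak} since $\m \cdot F^e_* R = \m^{[p^e]} \subseteq \m$, and invoke Proposition~\ref{prop:Naklimit} together with the idempotence of $\cl_F$.) The main subtlety is the identification of each closure with the corresponding algebra closure: for plus closure this rests on the Hensel-lemma fact that $R^+$ is quasi-local, and for Frobenius closure it uses $R^\infty \otimes_R M = \varinjlim_e F^e_* R \otimes_R M$ to translate $1 \otimes u = 0$ in $R^\infty \otimes_R M$ into $u$ being killed by some iterate of Frobenius.
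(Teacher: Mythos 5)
Your proof is correct and follows essentially the same route as the paper's: identify each closure as an algebra closure $\cl_A$ and then apply Theorem~\ref{thm:algclosuresareNak} after noting that $\m A$ lies in the Jacobson radical. The paper's proof is terser (it simply states that $R^{1/p^\infty}$ and $R^+$ are local with maximal ideal containing $\m$, and leaves the big Cohen--Macaulay case implicit since the local hypothesis is built into the statement), whereas you supply the supporting details -- the Henselian argument for $R^+$, the quasi-locality of the perfection, and the alternative via Proposition~\ref{prop:Naklimit} -- all of which are sound.
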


\begin{proof}
Note that Frobenius closure is $\cl_{R^{1/p^\infty}}$ and plus closure is $\cl_{R^+}$. Each of the algebras $R^{1/p^\infty}$ and $R^+$ is local, with maximal ideal containing $\m$, so we can apply Theorem \ref{thm:algclosuresareNak}.
\end{proof}

\section{Nakayama interiors are dual to Nakayama closures}
\label{sec:nakayama}

In this section, we define a Nakayama interior and prove that the dual of a Nakayama closure is a Nakayama interior. $R$ will be a commutative ring with additional hypotheses as specified.

\begin{defn}
\label{def:Nakayamainterior}
Let $\int$ be an interior operation on the class of Artinian $R$-modules, where $(R,\m)$ is a Noetherian local ring. We say that $\int$ is \emph{Nakayama} if whenever $A \subseteq B$ are Artinian modules such that $\int(A :_B \m) \subseteq A$, we have $\int(A)=\int(B)$.
\end{defn}

We need the following presumably well-known fact:
\begin{lemma}\label{lem:simple}
If $R$ is any associative (not necessarily commutative) ring and $A$ is a nonzero Artinian left $R$-module, then $A$ contains a simple left $R$-module.
\end{lemma}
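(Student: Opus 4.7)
The plan is to use the descending chain condition directly to extract a minimal nonzero submodule and observe that minimality forces simplicity.

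First, I would let $\cS$ be the collection of all nonzero $R$-submodules of $A$. Since $A$ itself is a nonzero submodule of $A$, we have $A \in \cS$, so $\cS$ is nonempty. Because $A$ is Artinian, every nonempty collection of submodules of $A$ has a minimal element with respect to inclusion, so I can pick a minimal element $S \in \cS$.

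Next I would verify that $S$ is simple. By construction $S \neq 0$. Let $T$ be a nonzero $R$-submodule of $S$. Then $T$ is also a nonzero submodule of $A$, so $T \in \cS$, and $T \subseteq S$. By minimality of $S$ in $\cS$, we get $T = S$. Therefore the only submodules of $S$ are $0$ and $S$, which (together with $S \neq 0$) means $S$ is simple, as required.

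There is no real obstacle here; the argument is essentially the defining property of the descending chain condition applied to the nonempty poset $\cS$. The only mild subtlety is that the statement is made for an arbitrary (possibly noncommutative) associative ring, but since we only use the internal submodule lattice of the left $R$-module $A$ and never any commutativity, the same one-line minimal-element argument works verbatim.
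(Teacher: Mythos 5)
Your proof is correct and uses essentially the same idea as the paper's: the paper phrases it as a proof by contradiction, building a strictly descending chain $A_0 \supsetneq A_1 \supsetneq \cdots$ that cannot terminate if no simple submodule exists, while you invoke the equivalent ``every nonempty family of submodules has a minimal element'' form of the Artinian condition directly. Both proofs rest on exactly the same observation that a minimal nonzero submodule must be simple, so there is no substantive difference.
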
   

\begin{proof}
If $A$ doesn't contain a simple module, then suppose there is a proper descending chain of length $n$ in $A$:  $A_0\supsetneq A_1 \supsetneq\cdots\supsetneq A_n \neq 0$. Since $A_n$ is nonzero and not simple,  there is an $A_{n+1} \neq 0$ properly contained in $A_n$.  Hence there is an infinite descending chain which is a contradiction since $A$ is Artinian.
\end{proof}

This allows us to give an example of a Nakayama interior:

\begin{lemma}   
\label{lem:identityisNak}
Let $(R,\m)$ be a Noetherian local ring.  Then the identity operation is a Nakayama interior on the class of Artinian modules.
\end{lemma}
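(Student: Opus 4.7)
The plan is to verify the interior-operation axioms trivially and then focus attention on the Nakayama condition, which in this case reduces to a purely module-theoretic statement about Artinian modules. Indeed, the identity $\int(A) = A$ automatically satisfies $\int(A) \subseteq A$, is order-preserving, idempotent, and functorial. So the content lies entirely in showing: whenever $A \subseteq B$ are Artinian $R$-modules with $(A :_B \m) \subseteq A$, we have $A = B$.

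I would argue this by contrapositive. Suppose $A \subsetneq B$. Then $B/A$ is a nonzero Artinian $R$-module (quotients of Artinian modules are Artinian), so by Lemma \ref{lem:simple} it contains a simple $R$-submodule $S$. Any simple $R$-module is of the form $R/\m'$ for some maximal ideal $\m'$; but $R/\m'$ embeds into the Artinian module $B/A$, so $R/\m'$ is itself Artinian. Since $R/\m'$ is also a Noetherian domain, being Artinian forces it to be a field. Hence $\m'$ is maximal, and since $R$ is local, $\m' = \m$. Lifting a generator of $S$ to $B$ produces an element $b \in B \setminus A$ with $\m b \subseteq A$, i.e., $b \in (A :_B \m) \setminus A$, contradicting the hypothesis $(A :_B \m) \subseteq A$. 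Therefore $A = B$ and $\int(A) = A = B = \int(B)$.

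There is no genuine obstacle here; the only point that requires a moment's care is the identification of simple submodules of Artinian $R$-modules with copies of the residue field $k = R/\m$, which follows from the standard fact above. In fact, once this is noted, the argument is merely the observation that the socle of $B/A$ is nonzero and lifts to elements of $(A :_B \m)$ outside $A$.
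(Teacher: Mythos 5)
Your proof is correct and follows essentially the same argument as the paper: apply Lemma \ref{lem:simple} to $B/A$, identify a simple submodule with $R/\m$, and lift a generator to produce an element of $(A :_B \m) \setminus A$, contradicting the hypothesis. One minor remark: your justification that the simple submodule is a copy of $k$ is slightly circular — you first assert that a simple module has the form $R/\m'$ with $\m'$ \emph{maximal}, and then re-derive that $\m'$ is maximal from the Artinian/Noetherian-domain argument; the standard shortcut is that for any nonzero $x$ in a simple module $S$, $S \cong R/\ann(x)$ and simplicity forces $\ann(x)$ to be maximal, which in a local ring makes $\ann(x) = \m$ immediately, with no need to invoke the Artinian structure of $B/A$ at that step.
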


\begin{proof}
Let $A \subseteq B$ be Artinian modules such that $(A :_B \m) \subseteq A$.  If $A \neq B$, then $B/A$ is a nonzero Artinian module, so it contains a nonzero simple submodule $S$.  Let $0 \neq x \in S$. Then $x = y+A \in B/A$ for some $y\in B$, and since $S \cong R/\m$ as $R$-modules, we have $\m x = 0$, which means that $\m y \subseteq A$.  That is, $y \in (A :_B \m) \setminus A$, which contradicts the assumption.  Hence $A=B$.
\end{proof}

As promised, we prove that the dual of a Nakayama closure is a Nakayama interior. First we prove a lemma.

\begin{lemma}\label{lem:NmodIL}
Let $R$ be a complete Noetherian local ring.  Let $L \subseteq M$ be Matlis-dualizable $R$-modules (i.e. $R$-modules isomorphic to their double Matlis duals), let $B := M^\vee$, and let $A \subseteq B$ be the $R$-submodule of $B$ such that $A = (M/L)^\vee = \{f \in M^\vee \mid L \subseteq \ker f\}$.  Let $I$ be an ideal of $R$.  Then when thought of as a submodule of $M^\vee$, we have $(M/I L)^\vee = (A :_B I)$.
\end{lemma}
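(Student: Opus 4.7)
The plan is to unpack both sides of the claimed equality as explicit subsets of $B = \Hom_R(M,E)$ and observe they coincide. First I would recall that applying the exact functor $\Hom_R(-,E)$ (exact because $E$ is injective) to the short exact sequence $0 \to L \to M \to M/L \to 0$ identifies $(M/L)^\vee$ with the submodule $\{f \in M^\vee : L \subseteq \ker f\}$ of $M^\vee$; this is the identification already made for $A$ in the hypothesis. The same reasoning applied to the submodule $IL \subseteq M$ identifies $(M/IL)^\vee$ with $\{g \in B : IL \subseteq \ker g\}$.

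Next I would describe $(A :_B I)$ explicitly using the standard $R$-action on $B = \Hom_R(M,E)$, namely $(i \cdot g)(m) = g(im)$. Unwinding the colon, $g \in (A :_B I)$ means that for every $i \in I$, the element $ig$ lies in $A$, i.e., $L \subseteq \ker(ig)$, i.e., $g(i\ell) = 0$ for all $\ell \in L$. Quantifying over $i \in I$ as well, this is exactly the condition that $g$ vanishes on every generator $i\ell$ of $IL$, hence (since $\ker g$ is an $R$-submodule) on all of $IL$.

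Combining the two descriptions yields $(M/IL)^\vee = \{g \in B : IL \subseteq \ker g\} = (A :_B I)$ as submodules of $B$, which is the desired equality. I do not expect any real obstacle here: the argument is a direct unwinding of definitions, and in fact the Matlis-dualizability hypothesis plays no role in the equality itself (it is presumably included because the lemma will be invoked in a setting where the identifications $L^{\vee\vee} \cong L$ and $M^{\vee\vee} \cong M$ matter downstream). The only point requiring a moment of care is to note that $IL$ is indeed a submodule of $M$, so that its Matlis dual admits the same description as that of $L$.
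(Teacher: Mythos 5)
Your proposal is correct and follows essentially the same route as the paper's proof: both unwind the colon $(A :_B I)$ and the identification $(M/N)^\vee = \{f : N \subseteq \ker f\}$ directly, using the $R$-action $(ig)(\ell) = g(i\ell)$ on $\Hom_R(M,E)$ to convert ``$ig \in A$ for all $i \in I$'' into ``$g$ vanishes on $IL$.'' The paper organizes this as a two-directional argument about elements of $If$ while you present a chain of equivalences, but the substance is identical; your remark that Matlis-dualizability is not actually used in the proof is also accurate.
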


\begin{proof}
Let $f \in M^\vee = \Hom_R(M,E)$.  We need to show that $I L \subseteq \ker f$ if and only if for all $g\in I f$, we have $L \subseteq \ker g$.

Accordingly, suppose that $I L \subseteq \ker f$.  Let $g \in I f$.  Then $g=\mu f$ for some $\mu\in I$.  Then for any $z \in L$, we have $g(z) = (\mu f)(z) = \mu \cdot f(z) = f(\mu z) = 0$ since $\mu z \in IL$.  Hence $L \subseteq \ker g$.

Conversely, suppose that $L \subseteq \ker g$ for all $g \in If$.  Let $z \in I L$.  Then we have $z = \sum_{j=1}^t \mu_j y_j$, where $\mu_j \in I$ and $y_j \in L$.  Let $g_j = \mu_j \cdot f$ for $1\leq j \leq t$.  Note that $g_j \in If$, so that $L \subseteq \ker g_j$ for all $1\leq j \leq t$.  Then we have \[
f(z) = f(\sum_j \mu_j y_j) = \sum_j \mu_j \cdot f(y_j) = \sum_j (\mu_j f)(y_j) = \sum_j g_j(y_j) = 0.
\]
Hence $IL \subseteq \ker f$.
\end{proof}

\begin{prop}
\label{prop:nakayamaclosureint}
Let $(R,\m)$ be a complete Noetherian local ring.  Let $\cl$ be a residual closure operation on the category of \fg\ $R$-modules, and let $\int$ be the interior operation on the category of Artinian $R$-modules given by $\int(A) = \left( \frac{A^\vee}{0^\cl_{A^\vee}}\right)^\vee$, i.e. the interior operation dual to $\cl$.  Then $\cl$ is a Nakayama closure if and only if $\int$ is a Nakayama interior.
\end{prop}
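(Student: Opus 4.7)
The plan is to use Matlis duality to translate the Nakayama condition on $\int$ (for Artinian modules) into a condition on $\cl$ (for \fg\ modules), then reduce it to the full Nakayama property via residuality.

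I begin with the correspondence. Since $R$ is a \clr, the functor $(-)^\vee$ gives an equivalence between \fg\ and Artinian $R$-modules. Given Artinian $A \subseteq B$, set $M := B^\vee$ and $L := (B/A)^\vee \subseteq M$, so that $A = (M/L)^\vee$ viewed as a submodule of $B = M^\vee$; every \fg\ pair $L \subseteq M$ arises this way. By Lemma~\ref{lem:NmodIL} with $I = \m$, we have $(A :_B \m) = (M/\m L)^\vee$. Unwinding $\int(C) = (C^\vee/0^\cl_{C^\vee})^\vee$ and using residuality of $\cl$ (which gives $0^\cl_{M/X} = X^\cl_M/X$), I compute
\[
\int(A) = (M/L^\cl_M)^\vee, \qquad \int(B) = (M/0^\cl_M)^\vee, \qquad \int(A :_B \m) = (M/(\m L)^\cl_M)^\vee.
\]

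Next, I translate the containments. Via Matlis reflexivity of the \fg\ module $M$, the assignment $X \mapsto (M/X)^\vee$ is an order-reversing bijection between submodules of $M$ and submodules of $B = M^\vee$. Hence $\int(A :_B \m) \subseteq A$ is equivalent to $L \subseteq (\m L)^\cl_M$, and $\int(A) = \int(B)$ is equivalent to $L^\cl_M = 0^\cl_M$. Therefore $\int$ is Nakayama if and only if for every \fg\ pair $L \subseteq M$, the implication $L \subseteq (\m L)^\cl_M \Rightarrow L^\cl_M = 0^\cl_M$ holds.

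Finally, I show this last condition is equivalent to $\cl$ being Nakayama. One direction is just the special case $L = 0$ of Definition~\ref{def:Nakayama}, with the $N$ there playing the role of our $L$. For the converse, assume the special case and suppose $L \subseteq N \subseteq (L + \m N)^\cl_M$. Passing to $\bar M := M/L$ and observing that $(L + \m N)/L = \m(N/L)$, residuality gives $(L + \m N)^\cl_M/L = ((L+\m N)/L)^\cl_{\bar M}$, so $N/L \subseteq (\m(N/L))^\cl_{\bar M}$. Applying the special case to $N/L \subseteq \bar M$ yields $(N/L)^\cl_{\bar M} = 0^\cl_{\bar M}$; a second use of residuality converts this to $N^\cl_M = L^\cl_M$, proving $\cl$ is Nakayama. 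The main obstacle is the bookkeeping in the Matlis duality translation, particularly verifying that both the hypothesis containment and the conclusion equality in the definition of Nakayama interior correctly dualize to their counterparts on the closure side; the secondary care point is the reduction from the full Nakayama closure property to its $L = 0$ special case, which must be carried out via residuality in the quotient $M/L$.
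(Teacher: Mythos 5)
Your proof is correct and follows essentially the same route as the paper: use Lemma~\ref{lem:NmodIL} to translate $(A :_B \m)$ into $(M/\m L)^\vee$, invoke the order-reversing bijection $X \mapsto (M/X)^\vee$ (Matlis duality for the \fg\ module $M$), and reduce the general Nakayama closure property to the $L=0$ case via residuality. The only presentational difference is that you first recast the Nakayama-interior condition entirely as the closure-side statement ``$L \subseteq (\m L)^\cl_M \Rightarrow L^\cl_M = 0^\cl_M$'' and then separately prove that this special case is equivalent to the full Nakayama closure property, whereas the paper interleaves the duality translation with the reduction; your organization is arguably a bit cleaner but the ideas are identical.
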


\begin{proof}
First assume $\cl$ is a Nakayama closure operation.  Let $A \subseteq B$ be Artinian $R$-modules such that $\int(A :_B \m) \subseteq A$.  Let $M = B^\vee$, and set 
\[L := \{f\in B^\vee \mid A \subseteq \ker f\} \cong (B/A)^\vee.\]  Then by Lemma~\ref{lem:NmodIL}, $(A :_B \m) = (M/\m L)^\vee$.  The fact  that $\int(A :_B\m) \subseteq A$ means then that \begin{align*}
(M/L)^\vee &= A \supseteq \int(A :_B\m) = \int((M/\m L)^\vee) \\
&\cong \left(\frac{M/\m L} {0^\cl_{M/\m L}}\right)^\vee = (M / (\m L)^\cl_M)^\vee.
\end{align*}
  Applying Matlis duality, we obtain $M/L \twoheadrightarrow M / (\m L)^\cl_M$, which is to say that $L \subseteq (\m L)^\cl_M$.  Then by the Nakayama closure property, we obtain $L \subseteq 0^\cl_M$.  This then implies $M/L \twoheadrightarrow M / 0^\cl_M$.  Applying Matlis duality, we have $\int(B) = (M / 0^\cl_M)^\vee \subseteq (M/L)^\vee = A$, as was to be shown.

Conversely, assume $\int$ is a Nakayama interior operation, and let $L \subseteq N \subseteq M$ be \fg\ $R$-modules such that $N \subseteq (L + \m N)^\cl_M$.  Without loss of generality (by the residual property of $\cl$), we may assume $L=0$.  Now let $B = M^\vee$ and $A = (M/N)^\vee \subseteq B$. The fact that $N \subseteq (\m N)^\cl_M$ means that $M/N \onto M / (\m N)^\cl_M$, which translates to \begin{align*}
\int(A :_B \m) &= \int((M/\m N)^\vee)=\left(\frac{M/\m N} {0^\cl_{M/\m N}}\right)^\vee\\
 &= (M/(\m N)^\cl_M)^\vee \subseteq (M/N)^\vee = A. 
\end{align*}
 Then since $\int$ is a Nakayama interior, it follows that \[
\left(\frac M {0^\cl_M}\right)^\vee = \left(\frac{B^\vee}{0^\cl_{B^\vee}}\right)^\vee = \int(B) \subseteq A = (M/N)^\vee.
\]
  Thus, we have $M/N \onto M / 0^\cl_M$, whence $N \subseteq 0^\cl_M$.
\end{proof}

\section{i-Expansions, co-generating sets, and core-hull duality}
\label{sec:expansions}

We define i-expansions, 
discuss the duality between cl-reductions and i-expansions, define the i-hull, and incorporate the notion of co-generation in order to prove that the i-hull is dual to the cl-core.   We will close the section by applying these tools to define the Artinistic version of a submodule selector; in turn, we use the definition to obtain a new characterization of the finitistic (i.e. classical) test ideal.

\begin{defn}
Suppose $R$ is an associative (not necessarily commutative) ring and $A \subseteq B$ are left $R$-modules.  Suppose $\int$ is an interior operation that operates on at least the submodules of $B$ that contain $A$.  We say $C$ with $A\subseteq C \subseteq B$ is an \emph{$\int$-expansion of $A$ in $B$} if $\int(A)=\int(C)$.
\end{defn}

\textbf{Setup:}
If $\int$ is an interior operation on Artinian $R$-modules, set $\alpha=\int^{\smile}$, and $\cl$ to be the corresponding residual closure operation.

\begin{lemma}
\label{lemma:inclusion}
Let $R$ be a complete Noetherian local ring. Let $C \subseteq B$ be $R$-modules, and $j:C \hookrightarrow B$ the inclusion. Let $\pi=j^\vee:B^\vee \to C^\vee$ and $L \subseteq C$. Then $\pi^{-1}\left((C/L)^\vee\right)=(B/L)^\vee.$
\end{lemma}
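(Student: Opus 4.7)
The plan is to unwind both sides of the claimed equality as explicit submodules of $B^\vee = \Hom_R(B,E)$ and check they coincide as sets of homomorphisms. The key preliminary step is to fix, for any inclusion $L \hookrightarrow M$, the identification of $(M/L)^\vee$ with the submodule $\{h \in \Hom_R(M,E) : h(L) = 0\}$ of $M^\vee$ obtained by dualizing the quotient surjection $M \twoheadrightarrow M/L$. This is the identification already used in Lemma~\ref{lem:NmodIL}, so applying it with $M = C$ gives
\[
(C/L)^\vee = \{f \in \Hom_R(C,E) : f|_L = 0\} \subseteq C^\vee,
\]
and with $M = B$ gives
\[
(B/L)^\vee = \{g \in \Hom_R(B,E) : g|_L = 0\} \subseteq B^\vee.
\]

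Next I would describe $\pi = j^\vee$ explicitly: for $g \in B^\vee$, we have $\pi(g) = g \circ j = g|_C$. With this in hand, the equivalence becomes a one-line chain: for $g \in B^\vee$,
\[
g \in \pi^{-1}\bigl((C/L)^\vee\bigr) \iff g|_C \in (C/L)^\vee \iff (g|_C)|_L = 0 \iff g|_L = 0 \iff g \in (B/L)^\vee,
\]
where the middle equivalence uses only the identification above and the third uses $L \subseteq C$.

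There is no real obstacle here — the lemma is a naturality/unwinding statement and contains no content beyond the explicit description of Matlis duals of quotients as annihilators in $\Hom(-,E)$. The one thing to be careful about is to state the identification of $(C/L)^\vee$ inside $C^\vee$ (and likewise for $B$) once, cleanly, so the reader sees that the equality of sets of functionals is literally the same statement as $\pi^{-1}((C/L)^\vee) = (B/L)^\vee$.
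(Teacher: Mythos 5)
Your proposal is correct and follows essentially the same route as the paper's proof: both identify $(M/L)^\vee$ inside $M^\vee$ as the set of functionals vanishing on $L$ and then observe that restricting along $j$ turns "$g|_C$ kills $L$" into "$g$ kills $L$." The only difference is cosmetic — you present it as a chain of equivalences rather than two separate containments.
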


\begin{proof}
Let $g \in \pi^{-1}\left((C/L)^\vee\right),$ so that $g \in \Hom_R(B,E)$. Then \[\pi(g) \in (C/L)^\vee \subseteq C^\vee,\] i.e., $g \circ j \in (C/L)^\vee$. This implies that $g \circ j$ kills $L$. Hence $L \subseteq \ker(g \circ j),$ so $L=L \cap C \subseteq \ker(g).$ This implies that $g \in (B/L)^\vee$.

Now suppose that $g \in (B/L)^\vee$. Then $g \in \Hom_R(B/L,E)$, so $L \subseteq \ker(g)$. Then $\pi(g)=g \circ j:C \to E$ must also kill $L$, so $g \in \pi^{-1}\left((C/L)^\vee\right)$.
\end{proof}

\begin{thm}
\label{thm:expansiondualisreduction}
Let $R$ be a Noetherian local ring and $A \subseteq B$ Matlis-dualizable $R$-modules.  Let $\int$ be an interior operation, let $\alpha := \int^\dual$, and let $\cl$ be the corresponding residual closure operation.  There exists an order reversing one-to-one correspondence between the poset of $\int$-expansions of $A$ in $B$ and the poset of $\cl$-reductions of $(B/A)^{\vee}$ in $B^{\vee}$. Under this correspondence, an $\int$-expansion $C$ of $A$ in $B$ maps to $(B/C)^\vee$, a $\cl$-reduction of $(B/A)^\vee$ in $B^\vee$.
\end{thm}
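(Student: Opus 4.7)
The plan is to use Matlis duality to convert the poset of intermediate submodules $A \subseteq C \subseteq B$ into the poset of submodules $L \subseteq K := (B/A)^\vee$ of $B^\vee$, and then use the residual property of $\cl$ to translate the $\int$-expansion condition into the $\cl$-reduction condition. For any intermediate $A \subseteq C \subseteq B$, I set $L := (B/C)^\vee \subseteq B^\vee$. Dualizing the short exact sequence $0 \to C \to B \to B/C \to 0$ yields $0 \to L \to B^\vee \to C^\vee \to 0$, so $C^\vee \cong B^\vee/L$. Since Matlis duality on the ambient class $\cM$ is an order-reversing bijection on submodule lattices, $C \mapsto L$ is an order-reversing bijection between the intermediate submodules $A \subseteq C \subseteq B$ and the submodules $L \subseteq K$ of $B^\vee$.

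Next I would compute $\int(C)$ explicitly in terms of $L$. By the residual property of $\cl$,
\[0^\cl_{C^\vee} \cong 0^\cl_{B^\vee/L} = L^\cl_{B^\vee}/L,\]
so $C^\vee/0^\cl_{C^\vee} \cong B^\vee/L^\cl_{B^\vee}$, and therefore
\[\int(C) = \bigl(C^\vee/0^\cl_{C^\vee}\bigr)^\vee \cong \bigl(B^\vee/L^\cl_{B^\vee}\bigr)^\vee,\]
realized as a submodule of $B \cong B^{\vee\vee}$ via the canonical identification. Specializing to $C=A$ (so $L=K$) yields $\int(A) \cong (B^\vee/K^\cl_{B^\vee})^\vee$.

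Finally, since $L \subseteq K$ forces $L^\cl_{B^\vee} \subseteq K^\cl_{B^\vee}$ and Matlis duality is an order-reversing bijection on submodules of $B^\vee$, the equality $\int(A) = \int(C)$ as submodules of $B$ holds if and only if $L^\cl_{B^\vee} = K^\cl_{B^\vee}$, which (given $L \subseteq K$) is exactly the statement that $L$ is a $\cl$-reduction of $K$ in $B^\vee$. Combined with Step~1, this gives the desired order-reversing bijection under which an $\int$-expansion $C$ corresponds to the $\cl$-reduction $(B/C)^\vee$. The main obstacle here is purely bookkeeping: keeping the Matlis-duality identifications straight and verifying that every module appearing (notably $C$, $B/C$, $L$, $L^\cl_{B^\vee}$, and $B^\vee/L^\cl_{B^\vee}$) lies in the class where $(-)^{\vee\vee}\cong \mathrm{id}$ is available, which follows from the standing assumption that $\cM$ is closed under submodules and quotients.
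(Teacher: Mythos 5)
Your proof is correct and takes essentially the same approach as the paper's: both pass through the identifications $C^\vee \cong B^\vee/L$, use residuality to rewrite $0^\cl_{C^\vee}$ in terms of $L^\cl_{B^\vee}$, and exploit the order-reversing bijection that Matlis duality induces on submodule lattices. The main difference is organizational: you compute $\int(C)\cong(B^\vee/L^\cl_{B^\vee})^\vee$ once and derive the biconditional $\int(A)=\int(C)\iff L^\cl_{B^\vee}=K^\cl_{B^\vee}$ in one stroke, whereas the paper proves the two implications separately, with the forward direction computing $L^\cl_{B^\vee}$ (rather than $\int(C)$) and invoking an explicit bookkeeping lemma, Lemma~\ref{lemma:inclusion}, to identify $\pi^{-1}((C/\int(C))^\vee)$ with $(B/\int(C))^\vee$. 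The ``purely bookkeeping'' step you flag at the end is exactly what that lemma handles; it does need to be verified that the isomorphism $\int(C)\cong(B^\vee/L^\cl_{B^\vee})^\vee$ is the identity once both sides are realized inside $B\cong B^{\vee\vee}$, but your outline correctly identifies this as the only remaining detail.
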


\begin{proof}
Let $C$ be an $\int$-expansion of $A$ in $B$; in other words, $\int(C) \subseteq A\subseteq C$.
Let $\pi:B^\vee \to B^\vee/(B/C)^\vee=C^\vee$ be the quotient map. Let $\alpha := \int^\dual$, let $\cl$ be the corresponding residual closure operation. We have:

\begin{align*}
\left((B/C)^\vee\right)^\cl_{B^\vee} &= \pi^{-1}\left(\alpha(B^\vee/(B/C)^\vee)\right) \\ &= \pi^{-1}(\alpha(C^\vee)) \\
&= \pi^{-1}\left((C/\int(C))^\vee\right),
\end{align*}
where the last equality follows because $\alpha=\int^\dual$. By Lemma \ref{lemma:inclusion}, we have
\[
    \pi^{-1}\left((C/\int(C))^\vee\right) 
    =\left(B/\int(C)\right)^\vee.
\]
Since $\int(C)=\int(A)$, this is equal to
\[\left(B/\int(A)\right)^\vee,\]
and the latter is equal to $\left((B/A)^\vee\right)^\cl_{B^\vee}$ by the same argument used for $C$. Hence
\[\left((B/C)^\vee\right)^\cl_{B^\vee}=\left((B/A)^\vee\right)^\cl_{B^\vee}.\]
This implies that $(B/C)^\vee$ is a cl-reduction of $(B/A)^\vee$; in other words,  $(B/C)^\vee \subseteq (B/A)^\vee \subseteq ((B/C)^\vee)_{B^{\vee}}^{\cl}$, establishing one direction of the correspondence.

Now let $N := (B/A)^{\vee} \subseteq B^{\vee}=:M$ and let $L$ be a $\cl$-reduction of $N$ in $M$; in other words, $L \subseteq N \subseteq L^{\cl}_M$.
We need to show $(M/L)^\vee$ is an $\int$-expansion of $(M/N)^\vee$ in $M^\vee$.  
 Note that the natural surjections $M \onto M/L \onto M/N$ yield inclusions $(M/N)^\vee \hookrightarrow (M/L)^\vee \hookrightarrow M^\vee$, so that the above make sense.  Accordingly, since $L^\cl_M = N^\cl_M$, we have \begin{align*}
    \int((M/L)^\vee) &= \alpha^\dual((M/L)^\vee) = \left(\frac{M/L}{\alpha(M/L)}\right)^\vee = \left(\frac{M/L}{L^\cl_M / L}\right)^\vee \\
    &= (M/L^\cl_M)^\vee = (M/N^\cl_M)^\vee = \left(\frac{M/N}{\alpha(M/N)}\right)^\vee \\
    &= \alpha^\dual((M/N)^\vee) = \int((M/N)^\vee) \qedhere
    \end{align*}
\end{proof}

\begin{prop}
\label{prop:maxintexpansions}
Let $(R, \m, k)$ be a complete Noetherian local ring.
Let $A \subseteq B$ be Artinian $R$-modules and $\int$ a Nakayama interior defined on Artinian $R$-modules.  Maximal $\int$-expansions of $A$ exist in $B$.  In fact, if $C$ is an $\int$-expansion of $A$ in $B$, then there is some maximal expansion $D$ of $A$ in $B$ such that $A \subseteq C \subseteq D \subseteq B$.
\end{prop}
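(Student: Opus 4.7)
The plan is to translate the problem via Matlis duality into the question of minimal $\cl$-reductions of $(B/A)^\vee$ in $B^\vee$, where $\cl$ is the residual closure dual to $\int$; this is already handled by Proposition~\ref{NR Ext}. Under the order-reversing bijection of Theorem~\ref{thm:expansiondualisreduction}, maximal $\int$-expansions of $A$ in $B$ correspond exactly to minimal $\cl$-reductions of $(B/A)^\vee$ in $B^\vee$, so producing the former reduces to producing the latter on the dual side.

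First, set $M := B^\vee$ and $N := (B/A)^\vee$. Because $R$ is complete Noetherian local and $A \subseteq B$ are Artinian, $M$ is a finitely generated $R$-module with submodule $N$. Since $\int$ is a Nakayama interior, Proposition~\ref{prop:nakayamaclosureint} ensures that $\cl$ is a Nakayama closure on finitely generated $R$-modules, so Proposition~\ref{NR Ext} applies: every $\cl$-reduction of $N$ in $M$ contains a minimal such.

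Next, given the $\int$-expansion $C$, Theorem~\ref{thm:expansiondualisreduction} gives that $L := (B/C)^\vee$ is a $\cl$-reduction of $N$ in $M$. Apply Proposition~\ref{NR Ext} to obtain a minimal $\cl$-reduction $K$ of $N$ in $M$ with $K \subseteq L$, and define $D := (M/K)^\vee$, viewed inside $M^\vee \cong B$. By the reverse direction of Theorem~\ref{thm:expansiondualisreduction}, $D$ is an $\int$-expansion of $A$ in $B$, and the containment $K \subseteq L$ dualizes (via the surjection $M/K \twoheadrightarrow M/L$) to the containment $C \subseteq D$, yielding $A \subseteq C \subseteq D \subseteq B$ as required.

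Finally, to verify maximality, suppose $D \subseteq D' \subseteq B$ with $D'$ another $\int$-expansion of $A$ in $B$. Then $K' := (B/D')^\vee$ is a $\cl$-reduction of $N$ in $M$ with $K' \subseteq K$, so minimality of $K$ forces $K' = K$ and hence $D' = D$. No step here presents a substantial obstacle; the heavy lifting already sits in the duality theorem and in the existence of minimal $\cl$-reductions on the finitely generated side. The main points requiring care are keeping consistent track of the order-reversing identifications so that ``maximal'' on one side really matches ``minimal'' on the other, and invoking completeness of $R$ so that Matlis duality is an involution on both the Artinian and finitely generated categories.
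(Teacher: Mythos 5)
Your proposal follows the paper's proof essentially exactly: dualize via Theorem~\ref{thm:expansiondualisreduction}, pass to a minimal $\cl$-reduction on the finitely generated side using Proposition~\ref{NR Ext}, dualize back, and check maximality by the order-reversing correspondence. The only (welcome) addition is your explicit invocation of Proposition~\ref{prop:nakayamaclosureint} to justify that $\cl$ is a Nakayama closure, which the paper leaves implicit.
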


\begin{proof}
Recall that the dual statement (Proposition~\ref{NR Ext}) holds for minimal $\cl$-reductions and \fg\ $R$-modules.

By Theorem~\ref{thm:expansiondualisreduction}, $(B/C)^\vee$ is a $\cl$-reduction of $(B/A)^\vee$ in $B^\vee$.  Hence by Proposition \ref{NR Ext},
there is some minimal $\cl$-reduction $U$ of $(B/A)^\vee$ in $B^\vee$ such that $U \subseteq (B/C)^\vee$. Let $D := (B^\vee / U)^\vee$.  Then by Theorem~\ref{thm:expansiondualisreduction}, $D$ is an $\int$-expansion of $A$ in $B$, and clearly $C \subseteq D$.

Now suppose that $D'$ is an $\int$-expansion of $A$ in $B$ with $D \subseteq D'$.  By Theorem~\ref{thm:expansiondualisreduction}, we then have that $(B/D')^\vee$ is a $\cl$-reduction of $(B/A)^\vee$ in $B^\vee$, and we have $(B/D')^\vee \subseteq (B/D)^\vee = U$.  By minimality of $U$, we therefore have $(B/D')^\vee = U = (B/D)^\vee$, whence $D=D'$.  Thus, $D$ is maximal.
\end{proof}

We now show that Proposition~\ref{prop:maxintexpansions} also holds for Noetherian $R$-modules, and in far greater generality.

\begin{prop}
\label{prop:maxexpansionsNoeth}
Let $R$ be an associative (i.e. not necessarily commutative) ring with identity.  Let $L\subseteq M$ be (left) $R$-modules, and let $\int$ be an interior operation on submodules of $M$.   Let $U$ be an $\int$-expansion of $L$ in $M$.  Assume $M/U$ is Noetherian.  Then there is an $R$-module $N$ with $U \subseteq N \subseteq M$, such that $N$ is a maximal $\int$-expansion of $L$ in $M$.
\end{prop}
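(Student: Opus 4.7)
The plan is to apply Zorn's lemma (or equivalently, a Noetherian induction) to the poset of $\int$-expansions of $L$ in $M$ that contain $U$, exploiting the Noetherian hypothesis on $M/U$ to verify the upper-bound condition. No special properties of $\int$ beyond the definition of an expansion are needed in the argument.

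First I would set
\[
\mathcal{F} := \{C : U \subseteq C \subseteq M \text{ and } \int(C) = \int(L)\}.
\]
Since $U$ is itself an $\int$-expansion of $L$ in $M$, $U \in \mathcal{F}$, so $\mathcal{F}$ is nonempty. Next I would observe that $\mathcal{F}$, ordered by inclusion, satisfies the ascending chain condition: any chain $C_1 \subseteq C_2 \subseteq \dotsb$ in $\mathcal{F}$ is in particular an ascending chain of submodules of $M$ all containing $U$; passing to $M/U$ (which is Noetherian by hypothesis) forces this chain to stabilize.

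I would then verify Zorn's hypothesis for $\mathcal{F}$: any chain $\{C_\lambda\}_{\lambda \in \Lambda}$ in $\mathcal{F}$ stabilizes by the previous paragraph, hence has a largest element $C_{\lambda_0}$, which serves as an upper bound lying in $\mathcal{F}$. (Equivalently one can run Noetherian induction starting from $U$: if $U$ is not already maximal, pick a strict extension and repeat; the process terminates by ACC.) Either way, there exists a maximal element $N \in \mathcal{F}$ with $U \subseteq N \subseteq M$.

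Finally, I would confirm that $N$ is a maximal $\int$-expansion of $L$ in $M$, not merely one among those containing $U$. If $N \subseteq N'$ with $N'$ any $\int$-expansion of $L$ in $M$, then $U \subseteq N \subseteq N'$, so $N' \in \mathcal{F}$; maximality of $N$ within $\mathcal{F}$ then forces $N = N'$. The only place even a hint of difficulty arises is this last comparison, and it reduces to a transitivity-of-containment observation, so there is no real obstacle once the Noetherian hypothesis is harnessed to stabilize ascending chains.
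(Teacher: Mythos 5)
Your proposal is correct and follows essentially the same route as the paper: identify the set $\cS$ of $\int$-expansions of $L$ between $U$ and $M$, use the Noetherian hypothesis on $M/U$ to obtain a maximal element, and then observe that any $\int$-expansion of $L$ containing that maximal element automatically lies in $\cS$, forcing equality. The only cosmetic difference is that you detour through Zorn's lemma (and the stabilization claim you state for arbitrary chains is really only argued for countable ascending ones), whereas the paper avoids Zorn entirely by invoking the maximal condition directly: every nonempty family of submodules of the Noetherian module $M/U$ has a maximal element, which is cleaner and also handles arbitrary chains without further comment.
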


\begin{proof}
Let $\cS$ be the set of all $R$-modules $D$ such that $U \subseteq D \subseteq M$ and $D$ is an $\int$-expansion of $L$.  Since $U \in \cS$, we have $\cS \neq \emptyset$.  Since $M/U$ is Noetherian and $\cS$ corresponds to a nonempty collection of submodules of $M/U$, $\cS$ contains a maximal element $N$.  Moreover, if $N'$ is an $\int$-expansion of $L$ in $M$ such that $N \subseteq N'$, then in particular $U \subseteq N' \subseteq M$, so that $N' \in \cS$.  Then by maximality of $N$, we have $N=N'$.  Hence $N$ is a maximal $\int$-expansion of $L$ in $M$.
\end{proof}

Next we describe a dual to the notion of a generating set. This will enable us to dualize a property of minimal reductions, namely that if $L$ is a minimal reduction of $N$ in $M$, a minimal generating set for $L$ extends to a minimal generating set for $N$.

\begin{defn}\label{def:cog}
Let $R$ be a Noetherian local ring, $L$ an $R$-module, and $g_1,\ldots,g_t \in L^\vee$. We say that the \textit{quotient of $L$ co-generated by $g_1,\ldots,g_t$} is $L/\left(\bigcap_i \ker(g_i)\right)$.

We say that $L$ is \textit{co-generated by $g_1,\ldots,g_t$} if $\bigcap_i \ker(g_i)=0$.

We say that a co-generating set for $L$ is \textit{minimal} if it is irredundant, i.e., for all $1 \le j \le t$, $\bigcap_{i \ne j} \ker(g_i) \ne 0$.
\end{defn}

\begin{lemma}
\label{lemma:imageinsocle}
Let $(R,\m,k)$ be a Noetherian local ring and $E=E_R(k)$.  
Let $V$ be an $R$-module such that $\m V=0$ and $g:V \to E$ an $R$-linear map. Then $\im(g)$ is contained in the unique copy of $k$ in $E$.
\end{lemma}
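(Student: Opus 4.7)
The plan is to observe that the hypothesis $\m V = 0$ turns $V$ into a $k$-vector space, and then to push this structure through $g$ to land inside the socle of $E$. More concretely, for any $v \in V$ and any $\mu \in \m$, $R$-linearity of $g$ gives $\mu \cdot g(v) = g(\mu v) = g(0) = 0$. Thus every element of $\im(g)$ is annihilated by $\m$, so $\im(g) \subseteq (0 :_E \m) = \Soc(E)$.

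Next, I would invoke the standard fact that the socle of $E = E_R(k)$ is a one-dimensional $k$-vector space, i.e., the unique simple $R$-submodule of $E$ (which is what the statement refers to as ``the unique copy of $k$ in $E$''). The quickest way to see this is via the structure of $E_R(k)$ as the injective hull of the simple module $k = R/\m$: the embedding $k \hookrightarrow E$ is essential, so $\Soc(E) \cap k \neq 0$, forcing $k \subseteq \Soc(E)$; conversely, $\Soc(E)$ is a semisimple $R$-module whose simple summands are each copies of $k = R/\m$, and essentiality of $k \subseteq E$ prevents a direct sum decomposition with more than one summand intersecting $k$ trivially.

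Combining the two steps, $\im(g) \subseteq \Soc(E) = k$, which is precisely the claim. This is essentially a one-line argument once one records the identification $\Soc(E) \cong k$, so I do not anticipate any obstacle; the main thing to be careful about is citing (or re-deriving) the identification of $\Soc(E_R(k))$ with the unique simple submodule $k$, which is standard (e.g.\ \cite[Theorem 3.2.12]{BH}-style references on Matlis duality). No Nakayama/interior machinery from earlier sections is needed here.
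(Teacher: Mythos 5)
Your proof is correct and takes essentially the same route as the paper: use $R$-linearity and $\m V = 0$ to see that $\im(g)$ is killed by $\m$, hence lands in $\Soc(E)$, and then identify $\Soc(E)$ with the unique copy of $k$. The paper simply states the socle identification without re-deriving it, but your extra justification via essentiality is standard and adds nothing that conflicts.
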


\begin{proof}
We have $\m \cdot \im(g)=0,$ so $\im(g) \subseteq \soc E$. Since the socle of $E$ is equal to this copy of $k$ inside of $E$, we get the desired result.
\end{proof}

\begin{lemma}
\label{lemma:cogeneration}
Let $R$ be a Noetherian local ring, and $A$ an $R$-module.  Assume $A$ is Matlis-dualizable (e.g. if $A$ is Artinian). The elements $g_1,\ldots,g_t \in A^\vee$ co-generate $A$ if and only if they generate $A^\vee$.
\end{lemma}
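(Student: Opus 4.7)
The plan is to exploit Matlis duality applied to the submodule $N := Rg_1 + \cdots + Rg_t \subseteq A^\vee$, and to translate both sides of the equivalence into a single statement about the vanishing of $A^\vee/N$ versus the vanishing of its dual.

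First, I would dualize the short exact sequence
\[
0 \ra N \ra A^\vee \ra A^\vee/N \ra 0.
\]
Since $E$ is injective, applying $\Hom_R(-,E)$ yields the exact sequence
\[
0 \ra (A^\vee/N)^\vee \ra A^{\vee\vee} \ra N^\vee \ra 0.
\]
Using the hypothesis that $A$ is Matlis-dualizable, I identify $A^{\vee\vee} \cong A$ via the evaluation map $a \mapsto (\phi \mapsto \phi(a))$. Under this identification, the image of $a \in A$ in $N^\vee$ is the restriction of evaluation-at-$a$ to $N$. Since $N$ is generated by the $g_i$, an element $a$ lies in the kernel of $A \to N^\vee$ iff $g_i(a)=0$ for every $i$. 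Thus
\[
(A^\vee/N)^\vee \;\cong\; \bigcap_{i=1}^t \ker(g_i)
\]
as submodules of $A$.

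Next I would observe the two translations: the $g_i$ generate $A^\vee$ iff $N = A^\vee$ iff $A^\vee/N = 0$; and the $g_i$ co-generate $A$ iff $\bigcap_i \ker(g_i) = 0$ iff $(A^\vee/N)^\vee = 0$. So the lemma reduces to showing that $A^\vee/N = 0$ if and only if $(A^\vee/N)^\vee = 0$. The forward direction is immediate. For the reverse direction, I would use the standard fact that $E = E_R(k)$ is a cogenerator for the category of $R$-modules over the Noetherian local ring $R$: for any nonzero $R$-module $M$ and any $0 \ne x \in M$, the annihilator $\ann(x)$ is contained in $\m$, so $\Hom_R(R/\ann(x), E) \supseteq \ann_E(\m) = k \ne 0$, and by injectivity of $E$ this extends to a nonzero element of $M^\vee$. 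Hence $M^\vee = 0$ forces $M = 0$, applied to $M = A^\vee/N$.

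The main obstacle, and the only step requiring care, is the nontrivial direction ``co-generate implies generate,'' because in general a quotient of a Matlis-dualizable module need not be Matlis-dualizable; the argument is saved by the weaker cogenerator property of $E$ just described, which holds over any Noetherian local ring without any completeness or finiteness assumption on the module. Everything else is a routine unwinding of the Matlis-duality identifications, plus the hypothesis $A \cong A^{\vee\vee}$ used once, to identify $\bigcap_i \ker(g_i)$ with $(A^\vee/N)^\vee$.
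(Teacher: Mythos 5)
Your proposal is correct and takes essentially the same route as the paper. The paper works with the map $\psi: R^{\oplus t} \to A^\vee$ given by the $g_i$ and the chain of equivalences ``$\psi$ surjective $\iff \psi^\vee$ injective $\iff \phi := \psi^\vee \circ j$ injective $\iff \bigcap_i \ker g_i = 0$,'' whereas you dualize the short exact sequence $0 \to N \to A^\vee \to A^\vee/N \to 0$ and reduce to ``$A^\vee/N = 0 \iff (A^\vee/N)^\vee = 0$.'' These are the same argument in slightly different packaging: both rely on biduality for $A$ and on $E$ being an injective cogenerator, the latter being exactly what makes the paper's step ``$\psi^\vee$ injective $\Rightarrow \psi$ surjective'' (equivalently your $M^\vee = 0 \Rightarrow M = 0$) go through. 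Your explicit remark that this step needs only the cogenerator property of $E$, not Matlis-dualizability of $A^\vee/N$, is a useful clarification that the paper leaves implicit.
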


\begin{proof}
Let $\psi: R^{\oplus t} \rightarrow A^\vee$ be the map given by the row matrix $[\begin{matrix} g_1 & g_2 & \cdots & g_t\end{matrix}]$.  This induces a dual map $\psi^\vee: A^{\vee \vee} \rightarrow E^{\oplus t}$.  Let $j: A \rightarrow A^{\vee \vee}$ be the biduality map, which is an isomorphism by the hypotheses of the lemma.  Then $\phi := \psi^\vee \circ j: A \rightarrow E^{\oplus t}$ is given by the column matrix $\left[\begin{matrix} g_1\\ g_2\\ \vdots \\ g_t \end{matrix}\right]$, sending each $a\in A$ to $(g_1(a), \ldots, g_t(a))$. We have $\ker \phi = \bigcap_{j=1}^t \ker g_j$.  Hence, we have \begin{align*}
    g_1, \ldots, g_t \text{ generate } A^\vee &\iff \psi \text{ is surjective} \\
    &\iff \psi^\vee \text{ is injective} \\
    &\iff \phi = \psi^\vee \circ j \text{ is injective} \\
    &\iff \ker \phi = 0 \\
    &\iff \bigcap_j \ker g_j = 0 \\
    &\iff g_1, \ldots, g_t \text{ co-generate } A. \qedhere
\end{align*}
\end{proof}

\begin{rem}\label{rem:fcogArt} One could obtain as a corollary that a Matlis-dualizable module is finitely co-generated if and only if it is Artinian.  However, in \cite{Vam}  it is shown that over a Noetherian ring, this equivalence holds for \emph{any} module. Note that V\'amos uses the term ``finitely embedded'' for what we and others (see e.g. \cite{Lam99}) call ``finitely co-generated''.\end{rem}

\begin{lemma}
\label{lemma:cogensspan}
Let $(R,\m,k)$ be a Noetherian local ring.  
Let $A$ be an Artinian $R$-module, and let $g_1,\ldots,g_t \in A^\vee$. The following are equivalent:
\begin{enumerate}
    \item $g_1,\ldots,g_t$ is a co-generating set for $A$,
    \item The restrictions of the $g_i$ to $\soc A$ span $\Hom_R(\soc A,k)$ as a $k$-vector space.
\end{enumerate}
\end{lemma}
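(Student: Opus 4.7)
The plan is a direct argument using linear-algebra duality applied to the $k$-vector space $V := \soc A$. By Lemma \ref{lemma:imageinsocle}, each restriction $g_i|_V$ lands in $\soc E = k$, so $g_i|_V \in \Hom_R(V,k) = \Hom_k(V,k)$ (these coincide because $\m V = 0$). Since $A$ is Artinian, $V$ is a finite-dimensional $k$-vector space, so the usual finite-dimensional duality theory is available.

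For the forward direction, assume the $g_i$ co-generate $A$, so $\bigcap_i \ker g_i = 0$ in $A$. Consider the $k$-linear map $\psi \colon V \to k^t$ sending $a$ to $(g_1(a), \ldots, g_t(a))$; its kernel is $V \cap \bigcap_i \ker g_i = 0$, so $\psi$ is injective. Taking $k$-linear duals therefore yields a surjection $\psi^* \colon (k^t)^* \to V^*$. Under the standard self-duality $(k^t)^* = k^t$, the map $\psi^*$ sends the $i$-th standard basis vector to $g_i|_V$. Surjectivity of $\psi^*$ says exactly that the restrictions $g_i|_V$ span $V^* = \Hom_R(V,k)$, which is condition (2).

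For the reverse direction, suppose the $g_i|_V$ span $V^*$, and assume toward contradiction that $U := \bigcap_i \ker g_i$ is nonzero. Since $U$ is a nonzero Artinian submodule of $A$, Lemma \ref{lem:simple} applied to $U$ produces a nonzero element $a \in \soc U \subseteq V$. By construction every $g_i$ vanishes at $a$, so every $k$-linear combination of the $g_i|_V$ vanishes at $a$; but by the spanning hypothesis every functional in $V^*$ then vanishes at $a$, forcing $a = 0$, a contradiction. The only real obstacle is keeping straight the interplay between $R$-linear and $k$-linear Hom, which is handled by Lemma \ref{lemma:imageinsocle} at the start; alternatively, one could route the argument through Lemma \ref{lemma:cogeneration} and Nakayama's lemma applied to the finite-length module $A^\vee$ together with the identification $A^\vee/\m A^\vee \cong \Hom_R(\soc A, k)$ via restriction, but the direct approach above seems more transparent.
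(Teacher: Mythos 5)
Your proof is correct. The reverse direction ((2)$\Rightarrow$(1)) is essentially the paper's argument: both set $U=\bigcap_i\ker g_i$, note $\soc U\subseteq\soc A$, and use Lemma~\ref{lem:simple} plus the spanning hypothesis to conclude $\soc U=0$ and hence $U=0$. The forward direction is where you genuinely diverge: the paper invokes Lemma~\ref{lemma:cogeneration} (co-generation of $A$ $\iff$ generation of $A^\vee$) and then uses injectivity of $E$ to extend an arbitrary $h\in\Hom_R(\soc A,k)$ to $\tilde h\colon A\to E$, expresses $\tilde h$ as an $R$-linear combination of the $g_i$, and restricts. Your version bypasses both of those ingredients: since co-generation just says $\psi\colon\soc A\to k^t$, $a\mapsto(g_1(a),\ldots,g_t(a))$, is injective, finite-dimensional $k$-vector-space duality immediately gives surjectivity of $\psi^*\colon(k^t)^*\to(\soc A)^*$, which is precisely the spanning statement. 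This is more elementary and self-contained (it needs only Lemma~\ref{lemma:imageinsocle} and finite-dimensionality of $\soc A$, not Lemma~\ref{lemma:cogeneration} or the injective property of $E$); the trade-off is that the paper's route makes explicit the link between co-generating sets of $A$ and generating sets of $A^\vee$, which is the conceptual fact the authors lean on repeatedly elsewhere, e.g.\ in Proposition~\ref{lemma:cogentobasis} and Proposition~\ref{prop:maxexpmincogen}.
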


\begin{proof}
(1) $\Rightarrow$ (2): Let $h \in \Hom_R(\soc A,k)$, and let $j:k \to E$ be the natural inclusion. Since $E$ is injective, there exists a map $\tilde{h}:A \to E$ extending $j \circ h:\soc A \to E$. By Lemma \ref{lemma:cogeneration}, since the $g_1,\ldots,g_t$ co-generate $A$ and since $A$ is Artinian, we have that $g_1, \ldots, g_t$ generate $A^\vee$. Hence $\tilde{h}=\sum_{i=1}^t r_ig_i$ for some $r_i \in R$. Restricting to $\soc A$, we get
\[j \circ h = \tilde{h}|_{\soc A}=\sum_{i=1}^t (r_ig_i)|_{\soc A}=\sum_{i=1}^t r_i(g_i|_{\soc A}).\]
By Lemma \ref{lemma:imageinsocle}, the images of the $g_i|_{\soc A}$ are all inside of the copy of $k$ inside of $E$, and so $h=\sum_{i=1}^t \bar{r_i}(g_i|_{\soc A})$, as desired.

(2) $\Rightarrow$ (1): Set $B=\bigcap_i \ker(g_i) \subseteq A$. 
Let $x \in \soc B \subseteq \soc A$. Then $g_i|_{\soc A}(x)=0$ for all $1 \le i \le t$. Since the $g_i|_{\soc A}$ span $\Hom_R(\soc A,k)$ as a $k$-vector space, this implies that $x=0$. Hence $\soc B=0$, which by Remark~\ref{lem:simple} means that $B=0$, since $B$ is Artinian. Therefore, $g_1,\ldots,g_t$ is a co-generating set for $A$.
\end{proof}

\begin{prop}\label{lemma:cogentobasis}
Let $R$ be a Noetherian local ring, and let  $g_1,\ldots,g_t$ be a co-generating set of an Artinian $R$-module $A$. \tfae
\begin{enumerate}
    \item $g_1,\ldots,g_t$ is a minimal co-generating set for $A$,
    \item The restrictions of the $g_i$ to $\soc A$ form a basis for $\Hom_R(\soc A,k)$ as a $k$-vector space.
\end{enumerate}
\end{prop}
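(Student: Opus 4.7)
The plan is to leverage Lemma \ref{lemma:cogensspan}, which already characterizes when a family in $A^\vee$ co-generates $A$: namely, when the restrictions to $\soc A$ span $\Hom_R(\soc A, k)$ as a $k$-vector space. Once this is in hand, the proposition reduces to the linear-algebraic observation that a spanning set is irredundant if and only if it is a basis. So the whole proof is really a double-contradiction argument that transfers ``redundancy'' between the family $g_1, \dots, g_t$ and the family of restrictions $g_1|_{\soc A}, \dots, g_t|_{\soc A}$.

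For $(1) \Rightarrow (2)$, I would assume $g_1, \dots, g_t$ is a minimal co-generating set. By Lemma \ref{lemma:cogensspan} the restrictions $g_i|_{\soc A}$ span $\Hom_R(\soc A, k)$, so it suffices to show they are $k$-linearly independent. If not, some $g_j|_{\soc A}$ lies in the $k$-span of the remaining restrictions, so the restrictions $\{g_i|_{\soc A} : i \neq j\}$ still span $\Hom_R(\soc A, k)$. Applying Lemma \ref{lemma:cogensspan} in the other direction shows that $\{g_i : i \neq j\}$ already co-generates $A$, contradicting minimality.

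For $(2) \Rightarrow (1)$, I would assume the restrictions form a basis and suppose toward a contradiction that $g_1, \dots, g_t$ is not a minimal co-generating set; then some $\{g_i : i \neq j\}$ still co-generates $A$, so by Lemma \ref{lemma:cogensspan} the corresponding restrictions $\{g_i|_{\soc A} : i \neq j\}$ span $\Hom_R(\soc A, k)$. But a basis of a $k$-vector space cannot have a proper subset that still spans, so this contradicts $(2)$.

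I do not anticipate any serious obstacle here, since both implications are formal once Lemma \ref{lemma:cogensspan} is invoked; the only mild subtlety is the degenerate case $A = 0$, where $\soc A = 0$, the co-generating set is empty, and both conditions hold vacuously, so it may be worth dispatching that case at the start of the proof.
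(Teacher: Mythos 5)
Your proof is correct and rests on essentially the same engine as the paper's, namely Lemma \ref{lemma:cogensspan} applied together with elementary linear algebra. The one genuine (if minor) difference is in $(2)\Rightarrow(1)$: the paper detours through Lemma \ref{lemma:cogeneration}, writing $g_t$ as an $R$-linear combination of $g_1,\dots,g_{t-1}$ in $A^\vee$ and then restricting to $\soc A$, whereas you simply apply Lemma \ref{lemma:cogensspan}$(1)\Rightarrow(2)$ to the putative proper co-generating subset and note that a basis has no proper spanning subset. Your version is a bit more symmetric, using both directions of Lemma \ref{lemma:cogensspan} and nothing else, and it sidesteps a small imprecision in the paper's $(1)\Rightarrow(2)$ argument (the containment $\ker(g_t|_{\soc A}) \subseteq \bigcap_{i<t}\ker(g_i|_{\soc A})$ is stated the wrong way round, and the leap from $\bigcap_{i<t}\ker(g_i|_{\soc A})=0$ to $\bigcap_{i<t}\ker(g_i)=0$ silently uses the Artinian-socle fact). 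The remark on $A=0$ is harmless but unnecessary, since both conditions fail whenever $t\geq 1$ and hold vacuously when $t=0$.
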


\begin{proof}
First we prove that (1) implies that the $g_i|_{\soc A}$ are linearly independent. We already know that they are a spanning set for $\Hom_R(\soc A,k)$ from Lemma \ref{lemma:cogensspan}. Suppose $\sum_{i=1}^t \bar{r_i}(g_i|_{\soc A})=0$, with the $\bar{r_i} \in k$ and at least one $\bar{r_j} \ne 0$. Without loss of generality, suppose that $\bar{r_t} \ne 0$, and by multiplying by an appropriate unit that $\bar{r_t}=-\bar{1}$. Then we can rewrite our equation as
\[g_t|_{\soc A}=\sum_{i=1}^{t-1} \bar{r_i}(g_i|_{\soc A}).\]

We have $\ker(g_t|_{\soc A})\subseteq \bigcap_{i=1}^{t-1} \ker(g_i|_{\soc A})$. Hence 
\[\bigcap_{i=1}^{t-1} \ker(g_i|_{\soc A})=\bigcap_{i=1}^t \ker(g_i|_{\soc A})=0,\]
which contradicts our hypothesis that $g_1,\ldots,g_t$ is a minimal co-generating set for $A$.

Next we prove that (2) implies that $g_1,\ldots,g_t$ are a minimal co-generating set for $A$. We already know that they are a co-generating set for $A$ by Lemma \ref{lemma:cogensspan}, so it suffices to prove minimality. Suppose without loss of generality that $\bigcap_{i=1}^{t-1} \ker(g_i)=0$. Then $g_1,\ldots,g_{t-1}$ also form a co-generating set for $A$, so they generate $A^\vee$. Hence $g_t=\sum_{i=1}^{t-1} r_ig_i$ for some $r_i \in R$. This implies that $g_t|_{\soc A}=\sum_{i=1}^{t-1} \bar{r_i}(g_i|_{\soc A})$, which contradicts the hypothesis that the $g_i|_{\soc A}$ are linearly independent.
\end{proof}

\begin{lemma}\label{lemma:oldlemma}
\cite[Theorem 2.3]{HRR-bf}
Let $(R,\m)$ be a Noetherian local ring and $L \subseteq M$ be \fg\ $R$-modules.  The following are equivalent.
\begin{enumerate}
    \item $L \cap \m M= \m L$.
    \item Any minimal generating set of $L$ extends to a minimal generating set for $M$.
\end{enumerate}
\end{lemma}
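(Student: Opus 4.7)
The plan is to reduce the problem to a statement about the residue field $k = R/\m$ by passing to $M/\m M$ and $L/\m L$, then invoke the standard Nakayama principle that a set of elements of a finitely generated $R$-module is a minimal generating set if and only if its image in the quotient modulo $\m$ is a $k$-basis.

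The key observation is that the natural $R$-linear map $\phi \colon L/\m L \to M/\m M$ induced by the inclusion $L \hookrightarrow M$ has kernel $(L \cap \m M)/\m L$. Since $\m L \subseteq L \cap \m M$ always holds, condition (1) is equivalent to $\phi$ being injective. I would state and prove this small fact first.

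For (1) $\Rightarrow$ (2): assuming $\phi$ is injective, take any minimal generating set $x_1,\dotsc,x_n$ of $L$. By the Nakayama principle the images $\bar x_1,\dotsc,\bar x_n$ form a $k$-basis of $L/\m L$, so by injectivity of $\phi$ their images in $M/\m M$ are $k$-linearly independent. Extend this set to a $k$-basis $\bar x_1,\dotsc,\bar x_n,\bar x_{n+1},\dotsc,\bar x_m$ of $M/\m M$ by choosing preimages $x_{n+1},\dotsc,x_m \in M$. Then $x_1,\dotsc,x_m$ is a minimal generating set of $M$ (again by Nakayama) extending the given one.

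For (2) $\Rightarrow$ (1): fix any minimal generating set $x_1,\dotsc,x_n$ of $L$ and extend it by hypothesis to a minimal generating set $x_1,\dotsc,x_m$ of $M$. Then $\bar x_1,\dotsc,\bar x_m$ is a $k$-basis of $M/\m M$; in particular $\bar x_1,\dotsc,\bar x_n$ are linearly independent in $M/\m M$. Now let $z \in L \cap \m M$ and write $z = \sum_{i=1}^n r_i x_i$ using that $x_1,\dotsc,x_n$ generate $L$. The image of $z$ in $M/\m M$ is $0$, so $\sum_{i=1}^n \bar r_i \bar x_i = 0$, forcing each $r_i \in \m$. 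Hence $z \in \m L$, which gives $L \cap \m M \subseteq \m L$ and thus equality.

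There is no real obstacle here: the whole argument is a short linear-algebra manipulation enabled by Nakayama's lemma, and the only care needed is to verify that the kernel of $\phi$ really is $(L \cap \m M)/\m L$, which is immediate from unwinding definitions.
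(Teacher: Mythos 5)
Your proof is correct. The paper itself does not prove this lemma---it cites it from [HRR-bf, Theorem 2.3]---but the remark following the statement in the paper notes exactly your key observation, namely that condition (1) is equivalent to the injectivity of the map $L/\m L \to M/\m M$; your argument fills in the remaining standard Nakayama bookkeeping in both directions, and does so correctly.
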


\begin{rem} 
We have \[\displaystyle\frac{L}{L \cap \m M}\cong \displaystyle\frac{L+ \m M}{\m M},\]  the second of which is the image of the vector space $L/\m L$ in $M/\m M$.
So (1) means the map $L/\m L \to M/\m M$ is injective.
\end{rem}

\begin{prop}\label{prop:maxexpmincogen}
Let $(R,\m)$ be a Noetherian local ring and $\int$ a Nakayama interior on Artinian $R$-modules.  Let $A \subseteq B$ Artinian $R$-modules.  Suppose that $C \subseteq D$ are $\int$-expansions of $A$ in $B$, with $D$ a maximal $\int$-expansion.  Then any minimal co-generating set of $B/D$ extends to a minimal co-generating set for $B/C$.
\end{prop}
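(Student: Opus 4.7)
The plan is to translate the statement via Matlis duality into the corresponding fact about minimal $\cl$-reductions, where $\cl := \int^\dual$ is the residual closure operation dual to $\int$. By Proposition~\ref{prop:nakayamaclosureint}, $\cl$ is a Nakayama closure. Since $R$ is (implicitly, for the duality) complete Noetherian local and $B$ is Artinian, $B^\vee$ and its submodules $(B/A)^\vee$, $(B/C)^\vee$, $(B/D)^\vee$ are all finitely generated $R$-modules, placing us within the scope of Proposition~\ref{NR Ext}.

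First, I would apply Theorem~\ref{thm:expansiondualisreduction} to transfer the chain of $\int$-expansions $A \subseteq C \subseteq D \subseteq B$ to the order-reversed chain of $\cl$-reductions $(B/D)^\vee \subseteq (B/C)^\vee \subseteq (B/A)^\vee$ in $B^\vee$. The maximality of $D$ as an $\int$-expansion of $A$ in $B$ translates into minimality of $(B/D)^\vee$ as a $\cl$-reduction of $(B/A)^\vee$: if $K \subseteq (B/D)^\vee$ were a $\cl$-reduction of $(B/A)^\vee$ in $B^\vee$, then via the correspondence its dual $(B^\vee / K)^\vee$ would be an $\int$-expansion of $A$ in $B$ containing $D$, hence equal to $D$ by maximality, forcing $K = (B/D)^\vee$.

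Next, I would invoke Proposition~\ref{NR Ext}: since $\cl$ is a Nakayama closure on finitely generated $R$-modules and $(B/D)^\vee$ is a minimal $\cl$-reduction of $(B/A)^\vee$ in $B^\vee$ sitting inside the $\cl$-reduction $(B/C)^\vee$, any minimal generating set of $(B/D)^\vee$ extends to a minimal generating set of $(B/C)^\vee$.

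Finally, I would dualize back by applying Lemma~\ref{lemma:cogeneration}: elements $g_1,\ldots,g_t \in (B/C)^\vee$ generate $(B/C)^\vee$ as an $R$-module if and only if they co-generate the Artinian module $B/C$, and similarly for $B/D$. Minimality translates under this equivalence since it is simply an irredundancy condition: dropping one $g_j$ preserves the generating property iff it preserves the co-generating property. The natural inclusion $(B/D)^\vee \hookrightarrow (B/C)^\vee$ induced by the surjection $B/C \onto B/D$ lets one view each generator of $(B/D)^\vee$ simultaneously as an element of $(B/C)^\vee$, so the extension produced in the dual side is literally an extension of co-generating sets on the original side. The expected main obstacle is this last compatibility step: one must verify that a minimal co-generating set of $B/D$, viewed via the inclusion $(B/D)^\vee \hookrightarrow (B/C)^\vee$ and extended by elements of $(B/C)^\vee$, matches the extension guaranteed by Proposition~\ref{NR Ext} and remains minimal as a co-generating set of $B/C$; this follows from the bijection of Lemma~\ref{lemma:cogeneration} applied to each subset obtained by omitting a single element.
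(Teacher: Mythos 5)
Your proof is correct and takes essentially the same route as the paper: dualize the chain of expansions via Theorem~\ref{thm:expansiondualisreduction} to a chain of $\cl$-reductions, apply Proposition~\ref{NR Ext} to extend minimal generating sets, and translate back through Lemma~\ref{lemma:cogeneration}. The only cosmetic difference is that the paper explicitly regards the dual modules as $\hat{R}$-modules rather than tacitly assuming $R$ is complete, but since Artinian $R$-modules carry a natural $\hat{R}$-structure this is the same idea.
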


\begin{proof}
Given the setup of the statement of the proposition, we have:
\[(B/D)^\vee \subseteq (B/C)^\vee \subseteq (B/A)^\vee \subseteq B^\vee,\]
with $(B/D)^\vee$ a minimal $\cl$-reduction of $(B/A)^\vee$ in $B^\vee$ by Theorem~\ref{thm:expansiondualisreduction}.
By Proposition~\ref{NR Ext}, any minimal set of generators of $(B/D)^\vee$ extends to a minimal set of generators for $(B/C)^\vee$, where said modules are being considered over $\hat{R}$. Given that a minimal set of generators for $(B/D)^\vee$ is a minimal cogenerating set for $B/D$ in $B$ by  Lemma~\ref{lemma:cogeneration}, and the same holds with $D$ replaced by $C$, this gives the desired result.
\end{proof}

\begin{lemma}
\label{lemma:dualofintersectionissum}
Let $R$ be a complete Noetherian local ring.  
Let $B$ be an $R$-module such that it and all of its quotient modules are Matlis-dualizable.  Let $\{C_i\}_{i \in I}$ a collection of submodules of $B$. Then
\[ \left(\frac{B}{\sum_i C_i}\right)^\vee \cong \bigcap_i (B/C_i)^\vee\] and 
\[ \left(\frac{B}{\bigcap_i C_i}\right)^\vee \cong \sum_i (B/C_i)^\vee,\]
where all the dualized modules are considered as submodules of $B^\vee$.
\end{lemma}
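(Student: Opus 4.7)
The plan is to prove the two identities in turn: the first is essentially a formality from the exactness of Matlis duality, and the second follows from the first by applying it inside $B^\vee$ and invoking biduality.

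For the first identity, observe that because $E = E_R(k)$ is injective, the contravariant functor $(-)^\vee$ is exact, so dualizing $0 \to N \to B \to B/N \to 0$ for any submodule $N \subseteq B$ identifies $(B/N)^\vee$ with the annihilator $N^\perp := \{f \in B^\vee : f|_N = 0\}$ inside $B^\vee$. Taking $N = \sum_i C_i$, a functional kills $\sum_i C_i$ if and only if it kills each $C_i$, which yields
\[\left(B \bigm/ \sum_i C_i\right)^\vee = \bigcap_i C_i^\perp = \bigcap_i (B/C_i)^\vee.\]

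For the second identity, the containment $\sum_i (B/C_i)^\vee \subseteq (B/\bigcap_i C_i)^\vee$ is immediate since $\bigcap_i C_i \subseteq C_i$ forces every functional killing $C_i$ to kill $\bigcap_i C_i$. For the reverse containment, I would first verify that each submodule $C_i$ of $B$ is itself Matlis-dualizable: the hypothesis provides this only for quotients, but a five-lemma argument on the biduality square obtained from $0 \to C_i \to B \to B/C_i \to 0$ shows that since $B \to B^{\vee\vee}$ and $B/C_i \to (B/C_i)^{\vee\vee}$ are isomorphisms, so is $C_i \to C_i^{\vee\vee}$. Next I would apply the first identity inside $B^\vee$ with the submodules $F_i := (B/C_i)^\vee$, obtaining
\[\left(B^\vee \bigm/ \sum_i F_i\right)^\vee = \bigcap_i (B^\vee/F_i)^\vee.\]
Dualizing $0 \to C_i \to B \to B/C_i \to 0$ gives $B^\vee/F_i \cong C_i^\vee$, so $(B^\vee/F_i)^\vee \cong C_i^{\vee\vee} \cong C_i$ under biduality. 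Thus the left side is identified with $\bigcap_i C_i = D$ as a submodule of $B \cong B^{\vee\vee}$; unwinding the identification, this says $\sum_i F_i$ equals the annihilator in $B^\vee$ of $D$, which by the first identity applied to the single submodule $D$ equals $(B/D)^\vee$.

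The main obstacle is the final step of reading off an equality of submodules of $B^\vee$ rather than merely an isomorphism of quotients. This is handled by the double-annihilator law $H^{\perp\perp} = H$ for any submodule $H \subseteq B^\vee$, which in turn holds because $E = E_R(k)$ is an injective cogenerator for $R$-modules: every nonzero module element maps nontrivially to some copy of $E$. With this in hand, both $\sum_i F_i$ and $(B/D)^\vee$ are recovered as $D^\perp \subseteq B^\vee$, establishing equality.
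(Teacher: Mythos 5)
Your proof is correct and follows essentially the same route as the paper's: both establish the first identity by the direct annihilator description and then obtain the second by applying the first identity to the submodules $(B/C_i)^\vee$ of $B^\vee$ and invoking Matlis duality. Your version is a bit more explicit about why the resulting isomorphisms pin down an actual equality of submodules of $B^\vee$ (the five-lemma check that the $C_i$ are dualizable and the double-annihilator law coming from $E$ being an injective cogenerator), points the paper passes over more quickly, but the underlying argument is the same.
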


\begin{proof}
Note that elements of the left hand side of the first isomorphism are maps from $B \to E_R(k)$ whose kernel contains $\sum_i C_i$. Elements of the right hand side are maps from $B \to E_R(k)$ whose kernel contains $C_i$ for all $i \in I$, which proves the first isomorphism.  

For the second isomorphism, we will apply Matlis duality to the first isomorphism.  Namely, let $M= B^\vee$, and $L_i := (B/C_i)^\vee$ for each $i \in I$, considered as a submodule of $B^\vee$.  Then we have \begin{align*}
\frac B {\bigcap_i C_i} &= \frac{M^\vee} {\bigcap_i (M/L_i)^\vee} = \frac{M^\vee}{\left(M / \sum_i L_i\right)^\vee} \\
&= \left(\sum_i L_i\right)^\vee = \left(\sum_i (B/C_i)^\vee\right)^\vee.
\end{align*}
One more application of Matlis duality to the start and end of the chain of equalities then yields the second isomorphism.
\end{proof}

\begin{defn}
Let $R$ be an associative (not necessarily commutative) ring and $\int$ an interior operation defined on a class of (left) $R$-modules $\cM$.  If $A \subseteq B$ are elements of $\cM$, the \emph{$\int\hull$ of a submodule $A$ with respect to $B$} is the sum of all $\int$-expansions of $A$ in $B$, or
 \[
\int\hull^B(A):=\sum_{\int(C) \subseteq A \subseteq C \subseteq B} C.
\]
\end{defn}

\begin{thm}
\label{thm:hullexists}
Let $R$ be a complete Noetherian local ring.  Let $A \subseteq B$ be Artinian $R$-modules, and let $\int$ be a Nakayama interior defined on Artinian $R$-modules. Then the $\int$-hull of $A$ in $B$ is dual to the $\cl$-core of $(B/A)^\vee$ in $B^\vee$, where $\cl$ is the closure operation dual to $\int$.
\end{thm}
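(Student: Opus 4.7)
The plan is to derive Theorem~\ref{thm:hullexists} as a direct formal consequence of two tools already established: the order-reversing bijection between $\int$-expansions and $\cl$-reductions (Theorem~\ref{thm:expansiondualisreduction}) and the intersection/sum Matlis-duality formula (Lemma~\ref{lemma:dualofintersectionissum}). The strategy is to rewrite the $\cl$-core as an intersection indexed by $\int$-expansions of $A$ in $B$, then collapse that intersection to the Matlis dual of a single quotient of $B$, namely $B$ modulo the $\int$-hull.

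First I would fix notation. Set $H := \int\hull^B(A) = \sum_C C$, where $C$ runs over all $\int$-expansions of $A$ in $B$, and set $K := \cl\core_{B^\vee}((B/A)^\vee) = \bigcap_L L$, where $L$ runs over all $\cl$-reductions of $(B/A)^\vee$ in $B^\vee$. Since $R$ is a complete Noetherian local ring and $B$ is Artinian, $B$ and all its subquotients are Matlis-dualizable, so all the dualizations below are well-behaved.

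Next I would apply Theorem~\ref{thm:expansiondualisreduction} to reindex the intersection defining $K$. That theorem gives a bijection $C \longleftrightarrow (B/C)^\vee$ between $\int$-expansions $C$ of $A$ in $B$ and $\cl$-reductions $L$ of $(B/A)^\vee$ in $B^\vee$, under which every $L$ is of the form $(B/C)^\vee$. Hence
\[
K \;=\; \bigcap_L L \;=\; \bigcap_{C}\, (B/C)^\vee,
\]
the latter intersection taken inside $B^\vee$ and indexed by $\int$-expansions $C$ of $A$ in $B$. Now I would apply the first isomorphism of Lemma~\ref{lemma:dualofintersectionissum} to the family $\{C\}$ of submodules of $B$:
\[
\bigcap_{C} (B/C)^\vee \;\cong\; \Bigl(B \big/ \textstyle\sum_{C} C\Bigr)^{\!\vee} \;=\; (B/H)^\vee.
\]
Combining these identifications gives $K \cong (B/H)^\vee$ as submodules of $B^\vee$, which is exactly the desired statement that the $\int$-hull of $A$ in $B$ is Matlis dual (via the $(B/-)^\vee$ functor) to the $\cl$-core of $(B/A)^\vee$ in $B^\vee$.

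I do not expect a serious obstacle: the Nakayama hypothesis on $\int$ is what guarantees (via Proposition~\ref{prop:nakayamaclosureint}) that $\cl$ is a Nakayama closure on finitely generated $R$-modules, which is the setting under which $\cl$-cores of the form considered here are the natural dual notion; but the core-hull identification itself is purely formal once the two tools above are in hand. The only point that deserves a line of care is the compatibility of the bijection of Theorem~\ref{thm:expansiondualisreduction} with inclusions (so that the sum over $C$ matches the intersection over $L$), and this is built into the order-reversing nature of that bijection.
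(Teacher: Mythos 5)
Your proof is correct and follows essentially the same route as the paper's: rewrite the $\cl$-core as an intersection, reindex via the correspondence of Theorem~\ref{thm:expansiondualisreduction}, and collapse the intersection using Lemma~\ref{lemma:dualofintersectionissum}. The paper's proof is more terse (it cites only the definitions and Lemma~\ref{lemma:dualofintersectionissum}), and you make the implicit use of Theorem~\ref{thm:expansiondualisreduction} explicit, but the underlying argument is the same.
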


\begin{proof}
Let $M=B^\vee$ and $N=(B/A)^\vee$. We need to show that 
\[\left(M/\cl\core_M(N)\right)^\vee = \int\hull^B(A).\]
This follows from the definition of $\cl\core$, $\int\hull$, and Lemma \ref{lemma:dualofintersectionissum}.
\end{proof}

We conclude the section by exploring the dual to the concept  of the finitely generated version of a submodule selector (see Definition~\ref{def:fin}).  Moreover, we show how this dual notion relates to Theorem~\ref{thm:finintideal}.

\begin{defn}\label{def:cofin}
Let $R$ be a complete Noetherian local ring.  Let $\alpha$ be a submodule selector on a class of $R$-modules that is closed under taking submodules and quotient modules. For any fixed $R$-module $M \in \cM$, if $N$ is a submodule of $N$, for now we denote $\pi_N: M \onto M/N$ to be the natural surjection.  The \emph{Artinistic version} $\alpha^f$ of $\alpha$ is defined as \[
\alpha^f(M) := \bigcap \{\pi_N^{-1}(\alpha(M/N)) : M/N \text{ is 
Artinian}\}.
\]
\end{defn}

Recall (cf. Remark~\ref{rem:fcogArt}) that it is equivalent to take the intersection over all finitely co-generated quotients.

\begin{thm}\label{thm:cogen}
Let $R$ be a complete Noetherian local ring.  Let $\cM$ be a class of Matlis-dualizable $R$-modules that is closed under taking submodules and quotient modules.  Let $\alpha$ be a preradical on $\cM$.  Then $(\alpha_f)^\dual = (\alpha^\dual)^f$.
\end{thm}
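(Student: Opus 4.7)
Both $(\alpha_f)^\dual$ and $(\alpha^\dual)^f$ are submodule selectors on $\cM^\vee$; the plan is, for an arbitrary $P \in \cM^\vee$, to write $(\alpha_f)^\dual(P)$ and $(\alpha^\dual)^f(P)$ as the same intersection of submodules of $P$ indexed by the finitely generated submodules $M \subseteq P^\vee$.

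First I would unwind $(\alpha_f)^\dual(P)$. Since $\alpha$ is a preradical, it is order-preserving on inclusions, so the family $\{\alpha(M) : M \subseteq P^\vee \text{ f.g.}\}$ is directed (for f.g.\ $M_1, M_2$, both $\alpha(M_i)$ sit inside $\alpha(M_1+M_2)$). Hence $\alpha_f(P^\vee) = \sum_M \alpha(M)$. Because $\cM$ is closed under submodules and quotients, $P^\vee$ and each $P^\vee/\alpha(M)$ lie in $\cM$, hence are Matlis-dualizable, so Lemma~\ref{lemma:dualofintersectionissum} gives
\[
(\alpha_f)^\dual(P) \;=\; \left(\frac{P^\vee}{\sum_M \alpha(M)}\right)^{\!\vee} \;=\; \bigcap_{M} \left(\frac{P^\vee}{\alpha(M)}\right)^{\!\vee} \;\subseteq\; P.
\]

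Next I would set up the Matlis-duality bijection. A f.g.\ submodule $M \subseteq P^\vee$ gives the submodule $N_M := (P^\vee/M)^\vee \subseteq P$, with $P/N_M \cong M^\vee$; since $R$ is complete Noetherian local and $M$ is f.g., $M^\vee$ is Artinian, and by Remark~\ref{rem:fcogArt} this identifies f.g.\ submodules of $P^\vee$ with submodules $N \subseteq P$ for which $P/N$ is Artinian. I would then identify each term of $(\alpha^\dual)^f(P)$: since $(P/N_M)^\vee \cong M^{\vee\vee} \cong M$, the definition of the dual yields $\alpha^\dual(P/N_M) \cong (M/\alpha(M))^\vee$, embedded into $P/N_M \cong M^\vee$ as the submodule of maps annihilating $\alpha(M)$. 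Pulling this back along $\pi_{N_M}: P \onto P/N_M$ recovers precisely $(P^\vee/\alpha(M))^\vee$, so
\[
(\alpha^\dual)^f(P) \;=\; \bigcap_{N} \pi_{N}^{-1}\bigl(\alpha^\dual(P/N)\bigr) \;=\; \bigcap_{M} \left(\frac{P^\vee}{\alpha(M)}\right)^{\!\vee} \;=\; (\alpha_f)^\dual(P).
\]

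The main obstacle is the bookkeeping in the third step: one must verify that the ``usual'' embedding $\alpha^\dual(P/N_M) = ((P/N_M)^\vee/\alpha((P/N_M)^\vee))^\vee \hookrightarrow (P/N_M)^{\vee\vee} \cong P/N_M$ from the definition of $\dual$ agrees, under the identification $P/N_M \cong M^\vee$, with the submodule $(P^\vee/\alpha(M))^\vee/N_M$ produced by dualizing the chain $\alpha(M)\subseteq M\subseteq P^\vee$. Once this compatibility is confirmed by naturality of the biduality map applied to the surjection $M \onto M/\alpha(M)$ (together with functoriality of $\alpha$ under the isomorphism $M \cong (P/N_M)^\vee$), the matching of the two intersections is immediate.
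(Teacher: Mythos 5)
Your proposal is correct and follows essentially the same route as the paper's proof: express $(\alpha_f)^\dual(P)$ as an intersection of duals via Lemma~\ref{lemma:dualofintersectionissum}, translate the index set of finitely generated submodules of $P^\vee$ into the index set of submodules $N\subseteq P$ with $P/N$ Artinian via Matlis duality (and Remark~\ref{rem:fcogArt}), and identify each term $\pi_{N_M}^{-1}(\alpha^\dual(P/N_M))$ with $(P^\vee/\alpha(M))^\vee$ via the pullback computation (which is exactly Lemma~\ref{lemma:inclusion}). The only thing you flag as a potential ``obstacle'' — compatibility of the biduality embedding of $\alpha^\dual(P/N_M)$ with the embedding obtained by dualizing the chain $\alpha(M)\subseteq M\subseteq P^\vee$ — is handled in the paper implicitly by the conventions surrounding the $\dual$ operation and Lemma~\ref{lemma:inclusion}; your note that it reduces to naturality of biduality and isomorphism-equivariance of $\alpha$ is the right justification.
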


\begin{proof}
Let $A \in \cM^\vee$.  We have \[
    (\alpha_f)^\dual(A) = \left(\frac{A^\vee}{\alpha_f(A^\vee)}\right)^\vee = \left(\frac{A^\vee}{\displaystyle \sum_{N\subseteq A^\vee,\ N \text{f.g.}}\alpha(N)}\right)^\vee.
\]
By Lemma~\ref{lemma:dualofintersectionissum} and Matlis duality, the above is equal to
\[
\bigcap_{N\subseteq A^\vee,\ N \text{f.g.}} (A^\vee / \alpha(N))^\vee = \bigcap_{N\subseteq A^\vee,\ N \text{f.g.}} \pi_{(A^\vee/N)^\vee}^{-1}((N / \alpha(N))^\vee).
\]
By the usual one-to-one correspondence between submodules of $A$ and quotient modules of $A^\vee$, this is equal to
\[
\bigcap_{B \subseteq A,\ (A/B)^\vee \text{f.g.}} \pi_B^{-1}\left( \left(\frac{(A/B)^\vee}{\alpha((A/B)^\vee)}\right)^\vee \right) = \bigcap_{B \subseteq A,\ (A/B)^\vee \text{f.g.}} \pi_B^{-1}(\alpha^\dual(A/B)).
\]
But by Definition~\ref{def:cofin} and Lemma~\ref{lemma:cogeneration}, the latter equals $(\alpha^\dual)^f(A)$.
\end{proof}

Hence, Theorem~\ref{thm:finintideal} can be reinterpreted as a statement about the Artinistic version of the interior operation dual to a closure operation. In the particular case of tight closure, it allows us to extend the 
interpretation of the big test ideal in terms of maps from $R^{1/p^e}$ 
developed in \cite{nmeSc-tint} to a comparable interpretation of the finitistic tight closure test ideal, as follows:

\begin{thm}
\label{thm:testidealfinitistic}
Let $R$ be a complete Noetherian local $F$-finite reduced ring of prime characteristic $p>0$.  Let $c$ be a big test element for $R$.  Then the finitistic test ideal of $R$ consists of those elements $a\in R$ such that for every $\m$-primary ideal $J$ of $R$ and every nonnegative integer $e\geq 0$, there is an $R$-linear map $g: R^{1/{p^e}} \ra R/J$ with $g(c^{1/p^e}) = a+J$.
\end{thm}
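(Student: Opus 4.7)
The plan is to deduce Theorem~\ref{thm:testidealfinitistic} by combining Theorem~\ref{thm:finintideal} applied to tight closure, the Artinistic/finitistic duality of Theorem~\ref{thm:cogen}, and the description of the tight interior from \cite{nmeSc-tint}.  Let $\alpha$ be the pre-radical $M\mapsto 0^{*}_{M}$ on the class $\cA$ of Artinian $R$-modules, so that its dual submodule selector $\tau:=\alpha^{\dual}$ is the tight interior of \cite{nmeSc-tint}.  Since a complete Noetherian local ring is approximately Gorenstein, Theorem~\ref{thm:finintideal} applied with $I=R$ and $\cl = {*}$ yields
\[
(\alpha_{f})^{\dual}(R) = \bigcap_{\len(R/J)<\infty}\bigl(J :_{R} J^{*}\bigr).
\]
A short Krull-intersection argument identifies this common value with the classical finitistic test ideal $\tau^{\mathrm{fg}}(R) = \bigcap_{I\subseteq R}(I:I^{*})$, because $I^{*}\subseteq (I+\m^{n})^{*}$ and $\bigcap_n(I+\m^n)=I$ for every ideal $I$.

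Next, Theorem~\ref{thm:cogen} gives $(\alpha_{f})^{\dual}=(\alpha^{\dual})^{f}=\tau^{f}$.  Unwinding Definition~\ref{def:cofin}, and noting that $R/J$ is Artinian precisely when $J$ is $\m$-primary, we obtain
\[
\tau^{\mathrm{fg}}(R) = \bigcap_{J\,\m\text{-primary}}\pi_{J}^{-1}\bigl(\tau(R/J)\bigr),
\]
where $\pi_{J}\colon R\onto R/J$ is the quotient map.  Hence $a\in \tau^{\mathrm{fg}}(R)$ if and only if $a+J\in \tau(R/J)$ for every $\m$-primary $J$.

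Finally, since $R$ is $F$-finite and reduced and $c$ is a big test element, I would invoke the description of the tight interior from \cite{nmeSc-tint}: for any $R$-module $M$, the tight interior $\tau(M)$ is the image of the $R$-linear evaluation map $g\mapsto g(c^{1/p^{e}})$ from $\Hom_{R}(R^{1/p^{e}},M)$ to $M$ at every Frobenius level $e$.  Specialising to $M = R/J$ shows that $a+J\in\tau(R/J)$ if and only if for every $e\geq 0$ there is an $R$-linear map $g\colon R^{1/p^{e}}\to R/J$ with $g(c^{1/p^{e}})=a+J$.  Combining this with the previous paragraph yields the theorem.

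The principal obstacle lies in the last step: one must invoke the Epstein--Schwede description at each individual Frobenius level $e$, and this is where the $F$-finite-reducedness of $R$ and the bigness of the test element $c$ are essential, since they ensure that the image of evaluation at $c^{1/p^{e}}$ already exhausts the tight interior for every $e$ (rather than only in the direct limit).  The preceding steps amount to routine bookkeeping in the abstract closure--interior duality developed earlier in the paper.
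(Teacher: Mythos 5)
Your proof follows essentially the same chain as the paper's: Theorem~\ref{thm:finintideal} identifies $(\alpha_f)^\dual(R)$ with the finitistic test ideal, Theorem~\ref{thm:cogen} converts this to $(\alpha^\dual)^f(R)$, Definition~\ref{def:cofin} unwinds it as an intersection over $\m$-primary quotients, and the Epstein--Schwede evaluation description of the tight interior finishes. Two small cautions. First, it is not true that every complete Noetherian local ring is approximately Gorenstein (one needs, e.g., reducedness of the completion); the theorem's hypothesis that $R$ is reduced is what makes this hold, so you should cite that rather than completeness alone. You could also avoid the detour through the double-colon formula entirely by using the unconditional intermediate identity $(\alpha_f)^\dual(R)=\bigcap_{\len(M)<\infty}\ann_R(0^*_M)$ from Theorem~\ref{thm:finintideal}, which is closer to what the paper implicitly does. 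Second, in your closing paragraph you overstate what \cite[Theorem 2.5]{nmeSc-tint} gives: it does not say that the image of evaluation at $c^{1/p^e}$ equals $\tau(M)$ for each individual $e$, only that the \emph{intersection} over all $e$ of these images is $\tau(M)$. That weaker statement is exactly what the biconditional in your last step actually uses, so what you flag as the ``principal obstacle'' is not an obstacle at all.
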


\begin{proof}
Let $\alpha(M)=0_M^*$ for any $R$-module $M$.
Let $c\in R$ be a big test element for $R$.  We have the following sequence of equalities, which is justified below:
\begin{align*}
    \tau_{fg}(R) &= (\alpha_f)^\dual(R) = (\alpha^\dual)^f(R)\\
    &=\bigcap \{\pi_I^{-1}(\alpha^\dual(R/I)) : R/I \text{ finitely co-generated}\}\\
    &= \bigcap \{\pi_I^{-1}(\alpha^\dual(R/I)) : \lambda(R/I)<\infty\} \\
    &= \bigcap \{\pi_I^{-1}((R/I)_{*R}) : \lambda(R/I)<\infty\} \\
    &= \{a\in R : a+I \in (R/I)_{*R} \text{ whenever } \lambda(R/I)<\infty\} \\
    &= \{a\in R : a+I \in \tr_{c^{1/q}, R^{1/q}}(R/I),\ \forall \text{ finite colength } I,\ \forall q \}.
\end{align*}

We justify the steps of this proof one by one.  The first equality is by Theorem~\ref{thm:finintideal} (or \cite[Theorem 5.5]{nmeRG-cidual}).  The second equality is by Theorem~\ref{thm:cogen}.  The equality on the second line is by definition.

To see the equality on the third line, note that $R/I$ is finitely co-generated if and only if $(R/I)^\vee$ is finitely generated (by Lemma~\ref{lemma:cogeneration}).  But $(R/I)^\vee \cong \ann_E(I)$ is finitely generated if and only if it is of finite length, since it is already Artinian.  But of course $(R/I)^\vee$ has finite length if and only if $\len(R/I)<\infty$.

The equality on the fourth line follows from \cite[Corollary 3.6]{nmeSc-tint}, which in our terminology says that tight interior (in the sense given in \cite{nmeSc-tint}) is smile-dual to tight closure.  The equality on the fifth line is by definition of $\pi_I$.  The equality on the final line then follows from \cite[Theorem 2.5]{nmeSc-tint}.
\end{proof}

\section{Core and hull comparisons for known closure operations and their dual interiors} \label{sec:chc}

In this section we extend many of the results of \cite{FoVa-core} on $\cl$-spread to the module setting, proving that liftable integral spread exists along the way, and then prove dual results for $\int$-hulls.

\begin{defn}
Let $R$ be an associative, not necessarily commutative ring. If $\cl_1$ and $\cl_2$ are both closure operations on a class of left $R$-modules $\cM$, we say that $\cl_1 \leq \cl_2$ if $N^{\cl_1}_M \subseteq N^{\cl_2}_M$ for all $R$-modules $M \in \cM$ and all submodules $N \subseteq M$.  
\end{defn}

The following proposition generalizes \cite[Lemma 3.3]{FoVa-core} to the module setting.

\begin{prop}\label{prop:clcontainment}
Let $R$ be a local ring. If $\cl_1 \leq \cl_2$ are closure operations defined on the class of \fg\ $R$-modules $\cM$ with $\cl_2$ Nakayama, and if $L$ is a $\cl_1$-reduction of $N$ in $M$, then there exists a minimal $\cl_2$-reduction $K$ of $N$ with $K \subseteq L$.
\end{prop}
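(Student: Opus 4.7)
The plan is to reduce the statement immediately to Proposition~\ref{NR Ext}. The hypothesis that $L$ is a $\cl_1$-reduction of $N$ in $M$ means precisely that $L \subseteq N \subseteq L^{\cl_1}_M$. Since $\cl_1 \leq \cl_2$, the containment $L^{\cl_1}_M \subseteq L^{\cl_2}_M$ holds, so we obtain
\[
L \subseteq N \subseteq L^{\cl_1}_M \subseteq L^{\cl_2}_M,
\]
which is equivalent to saying that $L$ is itself a $\cl_2$-reduction of $N$ in $M$.

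At this point, since $\cl_2$ is a Nakayama closure on finitely generated $R$-modules and $L$, $N$, $M$ all lie in $\cM$, Proposition~\ref{NR Ext} applies directly: any $\cl_2$-reduction of $N$ in $M$ contains a minimal $\cl_2$-reduction of $N$ in $M$. Applying this to $L$ yields the desired minimal $\cl_2$-reduction $K$ of $N$ in $M$ with $K \subseteq L$.

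There is no real obstacle here; the entire content of the proposition is the observation that the inequality $\cl_1 \leq \cl_2$ transports reductions forward (from the smaller closure to the larger one), after which the Nakayama property of $\cl_2$ alone supplies the minimal reduction via Proposition~\ref{NR Ext}. In fact, the Nakayama hypothesis on $\cl_1$ is irrelevant, and the proof does not use the local hypothesis beyond what is already built into the definition of a Nakayama closure. The proof is effectively a two-line argument.
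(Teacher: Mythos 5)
Your proof is correct and follows exactly the same route as the paper's: use $\cl_1 \leq \cl_2$ to promote $L$ from a $\cl_1$-reduction to a $\cl_2$-reduction, then invoke Proposition~\ref{NR Ext}. (One small slip in your closing remark: the proposition never assumes $\cl_1$ is Nakayama — only $\cl_2$ — so there is no such hypothesis to call irrelevant.)
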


\begin{proof}
Notice for all $\cl_1$-reductions $L$ of $N$ in $M$, $L\subseteq N \subseteq  L^{\cl_1}_M$.  
Since $\cl_1 \leq \cl_2$, $L^{\cl_1}_M \subseteq L^{\cl_2}_M$ for all submodules $L \subseteq M$.  
Hence $L \subseteq N \subseteq L^{\cl_1}_M\subseteq L^{\cl_2}_M.$  So $L$ is a $\cl_2$-reduction of $N$.  Now by Proposition \ref{NR Ext}, there is a minimal $\cl_2$-reduction $K \subseteq L$ of $N$ in $M.$
\end{proof}

As \cite[Proposition 3.4]{FoVa-core} does for ideals, we use this result 
to establish a containment between the $\cl_2$-core and the $\cl_1$-core of $N$ in $M$.

\begin{prop}\label{prop:clcorecontainment}
Let $R$ be a local ring and $\cl_1 \leq \cl_2$ be closure operations defined on the class of \fg\ $R$-modules $\cM$ with $\cl_2$ Nakayama.  If $N \subseteq M$ are $R$-modules in $\cM$, then ${\cl_2}\core_M(N) \subseteq {\cl_1}\core_M(N)$.
\end{prop}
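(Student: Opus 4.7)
The plan is to reduce the containment of cores to a statement about the collections of reductions being intersected. Recall that ${\cl_i}\core_M(N) = \bigcap \{L : L \subseteq N \subseteq L^{\cl_i}_M\}$ for $i=1,2$, so it suffices to show that every $\cl_1$-reduction of $N$ in $M$ contains some $\cl_2$-reduction of $N$ in $M$; intersecting over all $\cl_1$-reductions then forces ${\cl_2}\core_M(N) \subseteq {\cl_1}\core_M(N)$.

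To carry this out, I would fix an arbitrary $\cl_1$-reduction $L$ of $N$ in $M$ and invoke Proposition~\ref{prop:clcontainment} to produce a minimal $\cl_2$-reduction $K$ of $N$ in $M$ with $K \subseteq L$. Proposition~\ref{prop:clcontainment} is exactly what is needed here, since it is the place where the hypothesis $\cl_1 \leq \cl_2$ together with the Nakayama property of $\cl_2$ gets used: the inclusion of closures upgrades $L$ from a $\cl_1$-reduction to a $\cl_2$-reduction (because $N \subseteq L^{\cl_1}_M \subseteq L^{\cl_2}_M$), and the Nakayama hypothesis on $\cl_2$ then yields a minimal $\cl_2$-reduction inside $L$.

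With such a $K \subseteq L$ in hand, $K$ appears in the intersection defining ${\cl_2}\core_M(N)$, so ${\cl_2}\core_M(N) \subseteq K \subseteq L$. Since $L$ was an arbitrary $\cl_1$-reduction, taking the intersection over all such $L$ gives
\[
{\cl_2}\core_M(N) \;\subseteq\; \bigcap_{L \subseteq N \subseteq L^{\cl_1}_M} L \;=\; {\cl_1}\core_M(N),
\]
as required. There is no real obstacle here beyond the careful bookkeeping of what it means to be a $\cl_i$-reduction; all the substantive content has been packaged into Proposition~\ref{prop:clcontainment}. This mirrors the argument of \cite[Proposition 3.4]{FoVa-core} in the ideal case, with modules replacing ideals throughout.
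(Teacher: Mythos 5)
Your proof is correct and follows the same route as the paper: both fix an arbitrary $\cl_1$-reduction $L$, invoke Proposition~\ref{prop:clcontainment} to produce a minimal $\cl_2$-reduction $K \subseteq L$, and then conclude by intersecting over all $L$.
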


\begin{proof}
For any submodule $N \subseteq M$ in $\cM$, $\cl_1\core_M(N)=\bigcap_{L_1 \subseteq N \subseteq (L_1)_M^{\cl_1}} L_1$.  
By Proposition \ref{prop:clcontainment}, for every $\cl_1$-reduction of $N$ in $M$, there is a minimal $\cl_2$-reduction $L_2$ of $N$ in $M$ such that $L_2 \subseteq L_1.$  
Now 
\[{\cl_2}\core_M (N) \subseteq \bigcap_{L_2 \subseteq L_1 \subseteq N \subseteq (L_2)_M^{\cl_2}} L_2,\]
and 
\[\bigcap_{L_2 \subseteq L_1 \subseteq N \subseteq (L_2)_M^{\cl_2}} L_2 \subseteq \bigcap_{L_1 \subseteq N \subseteq (L_1)_M^{\cl_1}} L_1={\cl_1}\core_M(N). \qedhere\]
\end{proof}

The next corollary extends Corollary 3.5 of \cite{FoVa-core} to the module setting.

\begin{cor} Let $R$ be a Noetherian local ring of characteristic $p$, and let $N \subseteq M$ be $R$-modules.  Then
\[\leftfootline\core_M(N) \subseteq\ *\core_M (N) \subseteq\ F\core_M (N).
\]
\end{cor}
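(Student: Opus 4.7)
The statement is essentially a two-fold application of Proposition~\ref{prop:clcorecontainment}, so the plan is to reduce it to verifying the hypotheses of that proposition in each case. The key structural ingredients we need are: (i) all three operations $F$, $*$, and $\leftfootline$ are Nakayama closures on the class of finitely generated $R$-modules; and (ii) the pointwise ordering $F \leq\ * \leq\ \leftfootline$ holds for submodules of finitely generated modules.

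For (i), I would simply cite the discussion right after Theorem~B in the introduction, which asserts that Frobenius closure, tight closure, and integral closure (the latter extended to modules via the liftable construction of \cite{nmeUlr-lint}) are all Nakayama closures, pointing to \cite{nme*spread, nme-sp}. For (ii), the first containment $F \leq\ *$ is immediate from the definition of tight closure: if $u \in L^F_M$ then $u \otimes 1 \in L^{[q]}_M$ in $M \otimes F^e_* R$ for some $q = p^e$, and multiplication by a test element $c$ trivially preserves this, so $u \in L^*_M$. The containment $* \leq\ \leftfootline$ is the module-theoretic analogue of the classical fact that tight closure is contained in integral closure; in the liftable formulation of \cite{nmeUlr-lint} this is built into the construction (one reduces to the ideal case by passing to the symmetric algebra and lifting, where tight closure is known to be contained in integral closure).

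With (i) and (ii) in hand, apply Proposition~\ref{prop:clcorecontainment} first with $\cl_1 = F$ and $\cl_2 =\ *$ to obtain
\[
*\core_M(N) \subseteq F\core_M(N),
\]
and second with $\cl_1 =\ *$ and $\cl_2 = \leftfootline$ to obtain
\[
\leftfootline\core_M(N) \subseteq\ *\core_M(N).
\]
Chaining the two gives the desired inclusions.

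There is no real obstacle here; the corollary is genuinely a corollary, and the only point requiring care is citing the correct source for the module-level statement $* \leq \leftfootline$, since in the module setting different notions of integral closure exist and we must specifically use the liftable version to ensure the ordering holds and that the resulting closure is Nakayama. If needed, one could separate the two inclusions into their own sentences to make explicit which pair of closures each half of the chain relies on.
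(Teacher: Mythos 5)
Your proof is correct and takes essentially the same route as the paper: establish the pointwise ordering $F \leq\ * \leq\ \leftfootline$ on finitely generated modules (the paper cites \cite[Section 10]{HHmain} for the first inequality and \cite[Proposition 2.4(5)]{nmeUlr-lint} for the second) and then apply Proposition~\ref{prop:clcorecontainment} twice. One small clarification: since Proposition~\ref{prop:clcorecontainment} only requires the \emph{larger} closure $\cl_2$ to be Nakayama, you in fact only need $*$ and $\leftfootline$ to be Nakayama, not $F$; your listing of all three as needed is harmless but slightly over-demanding.
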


\begin{proof}
It is clear from the framework where Frobenius closure is first introduced \cite[Section 10]{HHmain} that the Frobenius closure of a submodule is always contained in its tight closure. That the tight closure of a submodule is always contained in its liftable integral closure follows from \cite[Proposition 2.4 (5)]{nmeUlr-lint}.
Now the result follows directly from Proposition \ref{prop:clcorecontainment} since \[N^F_M \subseteq N^*_M \subseteq N^{\leftfootline}_M
\] 
 for all $R$-submodules $N \subseteq M$.
\end{proof}

\begin{defn}
Let $R$ be a Noetherian local ring and $M$ a \fg\ $R$-module such that $\cl$ is defined on submodules of $M$. A submodule $N \subseteq M$ is said to have $\cl$-spread 
if all the minimal $\cl$-reductions of $N$ in $M$ have the same minimal number of generators. In this case, we denote this common number by $\ell^{\cl}_M(N)$ and call it the $\cl$-spread of $N$ in $M$.
\end{defn}  

Next we extend \cite[Proposition 3.7]{FoVa-core} to the module setting.

\begin{prop}\label{spreadcomp}
Let $(R, \m)$ be a Noetherian local ring.  Let $\cl_1 \leq \cl_2$ be Nakayama closure operations defined on a class of \fg\ $R$-modules $\mathcal{M}$.  If $N \subseteq M \in \mathcal{M}$ and the $\cl_1$- and $\cl_2$-spread of $N$ in $M$ exist, then $\ell^{\cl_1}_M(N) \geq \ell^{\cl_2}_M(N)$.
\end{prop}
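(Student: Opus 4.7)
The plan is to imitate directly the strategy used for ideals in \cite[Proposition 3.7]{FoVa-core}, leveraging the hypothesis that both closures are Nakayama so that the ``extension of minimal generating sets'' statement in Proposition~\ref{NR Ext} is available for each. The underlying idea is that as we move from $\cl_1$ to a larger closure $\cl_2$, every $\cl_1$-reduction becomes a $\cl_2$-reduction, and a minimal $\cl_2$-reduction sitting inside it cannot require more generators than the one we started with.

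Concretely, I would begin by choosing a minimal $\cl_1$-reduction $L$ of $N$ in $M$, so that by definition $L$ requires exactly $\ell^{\cl_1}_M(N)$ generators. Since $\cl_1 \leq \cl_2$, we have $N \subseteq L^{\cl_1}_M \subseteq L^{\cl_2}_M$, so $L$ is also a $\cl_2$-reduction of $N$ in $M$. Apply Proposition~\ref{prop:clcontainment} (or equivalently Proposition~\ref{NR Ext} applied to the Nakayama closure $\cl_2$) to find a minimal $\cl_2$-reduction $K$ of $N$ in $M$ with $K \subseteq L$.

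Now invoke the second assertion of Proposition~\ref{NR Ext} for the Nakayama closure $\cl_2$: any minimal generating set of $K$ extends to a minimal generating set of $L$. Hence the minimal number of generators $\mu(K)$ of $K$ is at most $\mu(L)$. Since by hypothesis $\cl_2$-spread exists, $\mu(K) = \ell^{\cl_2}_M(N)$, and by construction $\mu(L) = \ell^{\cl_1}_M(N)$, so
\[
\ell^{\cl_2}_M(N) = \mu(K) \leq \mu(L) = \ell^{\cl_1}_M(N),
\]
which is the desired inequality.

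The argument is essentially a formal transfer of the ideal-theoretic proof to modules, so there is no genuine obstacle: all the technical work has already been absorbed into Proposition~\ref{NR Ext} (the module-level statement that minimal generating sets of minimal Nakayama reductions extend). The only thing to be careful about is to apply the extension property to $\cl_2$ rather than $\cl_1$, since it is the larger closure whose minimal reduction sits \emph{inside} the smaller one; this is what produces the inequality in the correct direction.
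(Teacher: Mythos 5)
Your proof is correct and follows essentially the same route as the paper: pick a minimal $\cl_1$-reduction $L$, observe it is also a $\cl_2$-reduction since $\cl_1 \leq \cl_2$, and appeal to Proposition~\ref{NR Ext} for $\cl_2$ to get a minimal $\cl_2$-reduction inside $L$ whose minimal generating set extends to one of $L$, yielding the inequality. You simply spell out the intermediate steps that the paper compresses into its one-line citation of Proposition~\ref{NR Ext}; the spelled-out version is slightly clearer since $K \subseteq L$ alone does not give $\mu(K) \leq \mu(L)$ — the extension property is genuinely needed, and you correctly invoke it.
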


\begin{proof}
Let $L$ be a minimal $cl_1$-reduction of $N$ in $M$.  Then $\mu(L) = \ell_M^{\cl_1}(N)$.  Since $L \subseteq N \subseteq {L}^{\cl_1}_M \subseteq {L}^{\cl_2}_M$, then $L$ is a $\cl_2$-reduction of $N$ in $M$ (but not necessarily a minimal $\cl_2$-reduction of $N$).
By Proposition \ref{NR Ext}, $\ell_M^{\cl_1}=\mu(L) \geq \ell^{\cl_2}_M(N)$.
\end{proof}

\begin{rem}
\label{rem:otherspreadsexist}
This leads to the question: when does the $\cl$-spread exist? 
The first named author showed in \cite{nme-sp} that whenever $R$ is an excellent and analytically irreducible local domain of prime characteristic $p>0$ then for all $N \subseteq M$ with $M$ \fg, both the $*$-spread $\ell^*_M(N)$ and the $F$-spread $\ell^F_M(N)$ exist. In the next result we prove that the liftable integral spread typically exists and agrees with the analytic spread, the integral closure spread originally defined for ideals.
\end{rem}

\begin{thm}\label{thm:lispread}
Let $(R,\m,k)$ be a Noetherian local ring such that $k$ is infinite. Assume that either $R$ is $\mathbb Z$-torsion free (e.g. if it is of equal characteristic 0) or that it is unmixed and generically Gorenstein (e.g. if it is reduced).  Then the liftable integral spread exists.

In particular, if $L \subseteq M$ are finitely generated $R$-modules and $\pi: F \onto M$ is a minimal surjection from a finitely generated free module $F$, then the liftable integral spread of $L$ in $M$ is the analytic spread of $\pi^{-1}(L)$ in the sense of Eisenbud-Huneke-Ulrich.
\end{thm}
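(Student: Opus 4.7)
The plan is to reduce the existence and computation of the liftable integral spread $\ell^{\li}_M(L)$ to the Eisenbud--Huneke--Ulrich (EHU) theorem on analytic spread for submodules of free modules. Set $N := \pi^{-1}(L) \subseteq F$. By the discussion preceding Theorem B together with \cite{nmeUlr-lint, nme-sp}, liftable integral closure is a Nakayama closure on finitely generated $R$-modules, so Proposition \ref{NR Ext} guarantees that every liftable integral reduction of $L$ in $M$ contains a minimal one, and that minimal generating sets of such reductions extend to minimal generating sets of $L$. Under our hypotheses on $(R,\m,k)$, the EHU theorem ensures that $\ell := \ell^{\mathrm{EHU}}(N)$ exists and that every minimal (under inclusion) EHU integral reduction $V$ of $N$ in $F$ satisfies $\mu_R(V) = \ell$.

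The key link is the Epstein--Ulrich formula $K^{\li}_M = \pi(\overline{\pi^{-1}(K)})$, which implies that $K \subseteq L$ is a liftable integral reduction of $L$ in $M$ if and only if $\pi^{-1}(K)$ is an EHU integral reduction of $N$ in $F$. The map $K \mapsto \pi^{-1}(K)$ is an inclusion-preserving bijection between liftable integral reductions of $L$ in $M$ and EHU integral reductions of $N$ in $F$ that contain $\ker \pi$, with inverse $V \mapsto \pi(V)$. It follows that $K$ is a minimal liftable integral reduction of $L$ in $M$ if and only if $\pi^{-1}(K)$ is minimal, under inclusion, among EHU reductions of $N$ in $F$ containing $\ker \pi$.

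The main step is to show that each minimal liftable integral reduction $K$ of $L$ in $M$ satisfies $\mu_R(K) = \ell$. For the inequality $\mu_R(K) \leq \ell$, take a minimal EHU reduction $V_0$ of $N$, so $\mu_R(V_0) = \ell$; then $K_0 := \pi(V_0) \subseteq L$ is a liftable integral reduction with $\mu_R(K_0) \leq \ell$, and any minimal liftable reduction inside $K_0$ also has $\mu_R \leq \ell$. For the reverse inequality, one uses the infinite residue field to select a minimal EHU reduction $V$ of $N$ inside $\pi^{-1}(K)$ and invokes the minimality of $\pi$ (so $\ker \pi \subseteq \m F$) together with generic-linear-combination arguments to show that the $\pi$-images of generators of $V$ form a minimal generating set of $K$. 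The main obstacle is precisely this generator-count comparison: the module $\ker \pi$ can a priori inflate $\mu_R(\pi^{-1}(K))$ beyond $\mu_R(K)$, so matching the two requires carefully combining the infinite residue field hypothesis with the minimality of $\pi$, which ultimately yields $\mu_R(K) = \mu_R(V) = \ell$ and hence the desired equality $\ell^{\li}_M(L) = \ell^{\mathrm{EHU}}(\pi^{-1}(L))$.
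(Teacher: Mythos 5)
Your overall strategy---passing from $M$ to the minimal free cover $F$ and connecting $\li$-reductions to EHU-integral reductions of $\pi^{-1}(L)\subseteq F$---is the same as the paper's. However, the two steps you flag as routine are precisely where the paper spends its effort, and as written you have two real gaps.

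First, you attribute to ``the EHU theorem'' the statement that the analytic spread $\ell^{\mathrm{EHU}}(N)$ exists and that every minimal EHU-reduction of $N\subseteq F$ is $\ell$-generated. But \cite[Theorem~0.3]{EHU-Ralg} only establishes that, under the stated hypotheses on $R$, the EHU notion of integrality agrees with integrality computed inside the symmetric algebra of $F$; it does \emph{not} assert existence of a common minimal number of generators for minimal reductions. The paper uses that equivalence only as a launching point and then proves spread existence in the free case from scratch: it sets $\ell=\dim(R[L]/\m R[L])$ and establishes separately that (i) no $\li$-reduction of $L\subseteq F$ can be generated by fewer than $\ell$ elements (via module-finiteness and dimension counting) and (ii) every $\li$-reduction contains an $\ell$-generated one (via graded Noether normalization over the infinite residue field). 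Both ingredients are essential, and neither follows simply by citing EHU.

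Second, your reduction from $M$ to $F$ is logically shaky. Your ``$\leq$'' step only produces \emph{some} minimal $\li$-reduction with $\mu\le \ell$, not a bound on a given minimal reduction $K$, so it does not contribute to showing that all minimal reductions have the same $\mu$. And your ``$\geq$'' step (which, as you write it, actually claims equality) hinges on choosing a minimal EHU-reduction $V\subseteq \pi^{-1}(K)$ and showing $\mu(\pi(V))=\mu(V)$; this requires $V\cap \ker\pi\subseteq \m V$, and since your $V$ need not contain $\ker\pi$, it is unclear why this should hold. You yourself call this ``the main obstacle'' and then assert it is overcome by combining the infinite residue field with minimality of $\pi$, without giving the argument. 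The paper sidesteps this by working with $\tilde U:=\pi^{-1}(U)$, which \emph{does} contain $\ker\pi$; it then uses the free-case analysis to get $\tilde U\cap\m F=\m\tilde U$, combines this with $\ker\pi\subseteq\m F$ (minimality of $\pi$) to conclude $\ker\pi\cap\tilde U\subseteq\m\tilde U$, and from $U\cong\tilde U/(\ker\pi\cap\tilde U)$ deduces $\mu(U)=\mu(\tilde U)$. You would need to either adopt this bookkeeping or supply the missing argument for your choice of $V$; as it stands, the core comparison is a gap.
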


\begin{proof}
First assume that $M$ is itself a \fg\ free module, so that $\pi$ is the identity map.  By \cite[Theorem 0.3]{EHU-Ralg},  integrality in their sense coincides with integrality within the symmetric algebra of $M$ under our hypotheses. That is, say $M = \sum_{i=1}^n R x_i$, where the $x_i\in M$ are linearly independent over $R$. Then any submodule $U$ of $M$ is generated by $R$-linear combinations of the $x_i$.  Denote by $R[U]$ the $R$-subalgebra of the polynomial ring $R[x_1, \ldots, x_n]$ generated by a generating set for $U$.  Then for submodules $U \subseteq L \subseteq M$, we have $L \subseteq U^\li_M$ if and only if the induced map $R[U] \ra R[L]$ is module-finite.

Let $\ell=\dim (R[L] / \m R[L])$.  We claim that every minimal $\li$-reduction of $L$ in $M$ is $\ell$-generated.  Since we know minimal $\li$-reductions exist (see Proposition~\ref{NR Ext}), it will be enough to show the following two things: \begin{enumerate}
    \item\label{it:atleastell} No $\li$-reduction of $L$ in $M$ can be generated by fewer than $\ell$ elements, and
    \item\label{it:containsell} Every $\li$-reduction of $L$ in $M$ contains a $\li$-reduction that is $\ell$-generated.
\end{enumerate}
First suppose that $L$ contains a $\li$-reduction $U$ of $M$ with $\mu(U) = t<\ell$.  Then $R[L]$ is module-finite over $R[U]$ (which is $t$-generated as an $R$-algebra), whence $R[L] / \m R[L]$ is module-finite over $R[U] / (\m R[L] \cap R[U])$.  But the latter is at most $t$-generated as a $k$-algebra, so $\dim (R[U] / (\m R[L] \cap R[U])) \leq t$.  On the other hand, the module-finiteness shows that $\dim (R[U] / (\m R[L] \cap R[U])) = \ell >t$, and we have a contradiction that proves (\ref{it:atleastell}).

For (\ref{it:containsell}), let $U$ be a $\li$-reduction of $L$ in $M$.  We have that $A := R[U] / (\m R[L]\cap R[U])$ is a standard graded $k$-algebra, with $k$ an infinite field, and by the above reasoning about module-finiteness, we have $\dim A = \ell$.  Then by the graded Noether normalization theorem \cite[Theorem 1.5.17]{BH}, there exist algebraically independent degree one elements $a_1, \ldots, a_\ell \in A$ such that $A$ is module-finite over $k[a_1, \ldots, a_\ell]$.  Choose elements $u_j\in U$ whose residue classes mod $\m R[L]\cap R[U]$ are the $a_j$.  Let $V := \sum_{j=1}^\ell R u_j$.  By the module-finiteness condition, we have $A_s = k[a_1, \ldots, a_\ell]_s$ for $s\gg 0$, thought of as finite dimensional vector spaces over $k$.  By Nakayama’s lemma, it follows that $R[V]_s = R[U]_s$ for $s\gg 0$, whence $R[U]$ is module-finite over $R[V]$.  Since module-finiteness is transitive, $R[L]$ is also module-finite over $R[V]$.  Hence $L$ has an $\ell$-generated $\li$-reduction contained in $U$, namely $V$.

Now we examine the general case.  Let $\pi: F \onto M$ be as in the statement of the theorem, so that $\ker \pi \subseteq \m F$.  Let $\tilde{L} = \pi^{-1}(L)$, and let $U$ be a minimal $\li$-reduction of $L$ in $M$.  Then $\tilde{U}=\pi^{-1}(U)$ is a minimal $\li$-reduction of $\tilde{L}$ in $F$.  Hence by the above, we have $\tilde{U} \cap \m F = \m \tilde{U}$, so since $\ker \pi \subseteq \m F$, we have $\tilde{U} \cap \ker \pi \subseteq \m \tilde{U}$.  Thus, since $U \cong \tilde{U} / (\ker \pi \cap \tilde{U})$, we have \begin{align*}
\mu(U) &= \dim_k(U/\m U) = \dim_k\left(\frac{\tilde{U} / (\ker \pi \cap \tilde{U})}{\m(\tilde{U} / (\ker \pi \cap \tilde{U}))}\right)\\
&=\dim_k(\tilde{U} / (\m \tilde{U} + (\ker \pi \cap \tilde{U}))) = \dim_k(\tilde{U} / \m \tilde{U}) = \mu(\tilde{U}). \qedhere
\end{align*}
\end{proof}

Now we can extend Corollary 3.8 of \cite{FoVa-core} to modules, using liftable integral closure as our integral closure on modules.

\begin{cor}
Let $(R, \m)$ be an excellent, analytically irreducible domain of characteristic $p>0$, with infinite residue field.  Then for all finitely generated modules $N \subseteq M$, 
\[\ell^{\leftfootline}_M(N) \leq \ell^*_M (N) \leq \ell^F_M(N).\]
\end{cor}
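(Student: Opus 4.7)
The plan is to apply Proposition~\ref{spreadcomp} twice, once to the pair $(\cl_1, \cl_2) = (F, *)$ and once to $(\cl_1, \cl_2) = (*, \leftfootline)$, thereby deriving the two inequalities in parallel. The real content of the proof is simply verifying the three standing hypotheses of that proposition (existence of spreads, the Nakayama property of each closure, and the ordering of the closures), all of which have been addressed earlier in the paper.

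First, I would confirm that all three spreads exist under our running hypotheses. By Remark~\ref{rem:otherspreadsexist}, under the assumption that $R$ is an excellent analytically irreducible local domain of characteristic $p>0$, both $\ell^*_M(N)$ and $\ell^F_M(N)$ exist for every \fg\ pair $N \subseteq M$. For the liftable integral spread $\ell^\leftfootline_M(N)$, I would invoke Theorem~\ref{thm:lispread}: since $R$ is a domain, it is reduced, and $k$ is infinite by hypothesis, so the theorem applies.

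Second, I would verify that each of the three closures is Nakayama on the class of \fg\ $R$-modules. Tight closure and liftable integral closure are Nakayama by \cite{nme*spread, nme-sp} and \cite{nmeUlr-lint}, as recorded in the introduction following Theorem~B. Frobenius closure is covered in two ways: directly in \cite{nme*spread, nme-sp}, and also by Corollary~\ref{cor:F+bigNak} of the present paper. With the Nakayama property in hand, Proposition~\ref{spreadcomp} applies to each of the two pairs.

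Third, I would record the chain of containments $F \le * \le \leftfootline$ between these closures on \fg\ modules. The containment $N^F_M \subseteq N^*_M$ is immediate from the definition of tight closure in \cite[Section 10]{HHmain}, since any element of the Frobenius closure trivially satisfies the test-element condition for tight closure. The containment $N^*_M \subseteq N^\leftfootline_M$ is \cite[Proposition 2.4(5)]{nmeUlr-lint}. Both containments were already used in the preceding corollary for cores. Applying Proposition~\ref{spreadcomp} to $(F, *)$ gives $\ell^F_M(N) \ge \ell^*_M(N)$, and applying it to $(*, \leftfootline)$ gives $\ell^*_M(N) \ge \ell^\leftfootline_M(N)$; concatenating yields the displayed chain. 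There is no substantive obstacle in this proof: it is entirely a matter of assembling previously established facts, and the argument is the direct module-theoretic analog of \cite[Corollary 3.8]{FoVa-core}.
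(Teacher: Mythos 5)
Your argument is correct and is precisely the intended one: the paper gives this corollary with no written proof, leaving it as an immediate consequence of Theorem~\ref{thm:lispread}, Remark~\ref{rem:otherspreadsexist}, the standard ordering $F \le * \le \leftfootline$, and two applications of Proposition~\ref{spreadcomp}. You have assembled exactly these ingredients, verified the hypotheses (existence of all three spreads, Nakayama property of all three closures, and the closure ordering), and concatenated the resulting inequalities, so there is nothing to add.
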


Now we explore similar results on expansions and hulls, using the duality built up in Section \ref{sec:expansions}.
In particular, we discuss relationships between expansions and hulls for  the interior operations dual to Frobenius, tight, and liftable integral closure. 

\begin{defn}
Let $R$ be an associative but not necessarily commutative ring. Let $\int_1$ and $\int_2$ be interior operations defined on a class $\cM$ of $R$-modules. We say that $\int_1 \leq \int_2$ if for all $M \in \cM$, ${\int_1}(M) \subseteq {\int_2}(M)$.
\end{defn}

Using the notion of Nakayama interior, we derive similar statements to Propositions \ref{prop:clcontainment} and \ref{prop:clcorecontainment} for the containments of $\int$-expansions and $\int$-hulls.

\begin{prop}\label{prop:expcontainment}
Let $(R,\m)$ be a Noetherian local ring and $\int_2 \leq \int_1$ be interior operations on the class of Artinian $R$-modules $\cM$ with $\int_2$ a Nakayama interior then any $\int_1$-expansion of $A$ in $B$ is contained in a maximal $\int_2$-expansion of $A$ in $B$.
\end{prop}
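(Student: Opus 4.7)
The plan is to observe that in this setting every $\int_1$-expansion of $A$ in $B$ is automatically an $\int_2$-expansion of $A$ in $B$, and then to invoke Proposition~\ref{prop:maxintexpansions} applied to the Nakayama interior $\int_2$ to obtain the desired maximal expansion containing it.

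For the first step, I would first record the dual to the characterization $L \subseteq N \subseteq L_M^{\cl}$ of $\cl$-reductions: namely, a submodule $C$ with $A \subseteq C \subseteq B$ is an $\int$-expansion of $A$ in $B$ if and only if $\int(C) \subseteq A \subseteq C$. The forward direction is immediate, since $\int(C) = \int(A) \subseteq A$. For the reverse, order-preservation gives $\int(A) \subseteq \int(C)$, while applying $\int$ to $\int(C) \subseteq A$ and using idempotence gives $\int(C) = \int(\int(C)) \subseteq \int(A)$, so $\int(A) = \int(C)$.

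Now suppose $C$ is an $\int_1$-expansion of $A$ in $B$. By the characterization just noted, $\int_1(C) \subseteq A \subseteq C$, and the hypothesis $\int_2 \leq \int_1$ gives
\[\int_2(C) \subseteq \int_1(C) \subseteq A \subseteq C,\]
so $C$ is likewise an $\int_2$-expansion of $A$ in $B$. Since $\int_2$ is a Nakayama interior defined on Artinian $R$-modules, Proposition~\ref{prop:maxintexpansions} produces a maximal $\int_2$-expansion $D$ of $A$ in $B$ with $A \subseteq C \subseteq D \subseteq B$, which is the desired conclusion.

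I do not foresee a real obstacle here: once the equivalent characterization of $\int$-expansions is in hand, the argument is a direct dualization of the proof of Proposition~\ref{prop:clcontainment}. An alternative would be to Matlis-dualize via Theorem~\ref{thm:expansiondualisreduction}, turning the statement into Proposition~\ref{prop:clcontainment} applied to the corresponding dual closures $\cl_1 \leq \cl_2$, but this would additionally force $R$ to be complete, which the direct argument above sidesteps (beyond what is needed to apply Proposition~\ref{prop:maxintexpansions}).
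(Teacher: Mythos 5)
Your proposal is correct and takes essentially the same approach as the paper: show that every $\int_1$-expansion is automatically an $\int_2$-expansion via the inequality $\int_2(C) \subseteq \int_1(C) \subseteq A \subseteq C$, then appeal to Proposition~\ref{prop:maxintexpansions}. The only real difference is that you spell out the equivalence between the definition ($\int(A)=\int(C)$) and the characterization ($\int(C)\subseteq A\subseteq C$) of $\int$-expansions, which the paper uses silently when it writes ``Thus $\int_1(C)\subseteq A\subseteq C$'' and then concludes $C$ is an $\int_2$-expansion from the displayed chain of inclusions; making this explicit is a small but genuine improvement in rigor. Your observation that Proposition~\ref{prop:maxintexpansions} carries a completeness hypothesis that the statement at hand does not repeat is also well taken -- the completeness is in fact needed for the cited proposition, so it is implicitly assumed here.
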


\begin{proof}
Suppose $A \subseteq C \subseteq B$ and $C$ is an $\int_1$-expansion of $A$ in $B$.  Thus $\int_1(C) \subseteq A \subseteq C$.  Since $\int_2(C) \subseteq \int_1(C) \subseteq A \subseteq C$, $C$ is also an $\int_2$-expansion of $A$ in $B$.  Finally, $C$ must be contained in a maximal $\int_2$-expansion of $A.$  
\end{proof}

\begin{prop}\label{prop:hullcontainment}
Let $R$ be an associative (i.e. not necessarily commutative) ring and $\int_2 \leq \int_1$ interior operations on a class $\cM$ of (left) $R$-modules.  Let $A \subseteq B$ be $R$-modules such that $\int_1$ and $\int_2$ are defined on all $R$-modules between $A$ and $B$.  Then  $\int_1\hull^B(A) \subseteq \int_2\hull^B(A)$.
\end{prop}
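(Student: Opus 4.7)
The proof is essentially immediate from unfolding the definition of $\int\hull^B(A)$, and in fact it is already contained in the argument for Proposition~\ref{prop:expcontainment}. The plan is to observe that the inequality $\int_2 \leq \int_1$ causes the indexing set for the sum defining $\int_1\hull^B(A)$ to be contained in the indexing set for $\int_2\hull^B(A)$, after which the containment of sums is automatic.

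More concretely, I would first recall that by definition
\[
\int_j\hull^B(A) = \sum_{C} C, \qquad j = 1,2,
\]
where $C$ ranges over submodules with $A \subseteq C \subseteq B$ and $\int_j(C) \subseteq A$. (As noted following the definition, $\int_j(C) \subseteq A$ is equivalent to $\int_j(C) = \int_j(A)$, using that $\int_j$ is order-preserving and idempotent; we do not actually need this equivalence for the proof.) Then the key step is: if $C$ is any $\int_1$-expansion of $A$ in $B$, so that $A \subseteq C \subseteq B$ and $\int_1(C) \subseteq A$, then by the hypothesis $\int_2 \leq \int_1$ we have
\[
\int_2(C) \subseteq \int_1(C) \subseteq A,
\]
so $C$ is also an $\int_2$-expansion of $A$ in $B$. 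Hence every summand appearing in the definition of $\int_1\hull^B(A)$ also appears in the sum defining $\int_2\hull^B(A)$, and the conclusion
\[
\int_1\hull^B(A) \subseteq \int_2\hull^B(A)
\]
follows immediately.

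There is no real obstacle here; the only thing to be a little careful about is that we only need the half of the expansion condition that says $\int_j(C) \subseteq A$, which is preserved in the correct direction under $\int_2 \leq \int_1$. Nothing about Matlis duality, Nakayama properties, or the structure of $\cM$ beyond the bare definitions is needed, which is why the statement holds for arbitrary associative $R$ and arbitrary classes $\cM$ on which $\int_1, \int_2$ are defined.
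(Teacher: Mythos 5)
Your proof is correct and follows essentially the same route as the paper's: observe that $\int_2 \leq \int_1$ makes every $\int_1$-expansion of $A$ in $B$ also an $\int_2$-expansion, so the sum defining $\int_1\hull^B(A)$ is over a subcollection of the sum defining $\int_2\hull^B(A)$. (Incidentally, the paper's version has a small typo, writing ``$\int_1$-expansion'' where it means ``$\int_2$-expansion''; your write-up avoids that slip.)
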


\begin{proof}
Let $C$ be an $\int_1$-expansion of $A$ in $B$.  Then we have 
\[\int_2(C) \subseteq \int_1(C) \subseteq A \subseteq C,\] whence $C$ is an $\int_1$-expansion of $A$ in $B$.  Hence \begin{align*}
\int_1\hull^B(A) &= \sum_{i_1(C) \subseteq A \subseteq C \subseteq B} C \\
&\subseteq \sum_{i_2(D) \subseteq A \subseteq D \subseteq B}
D = \int_2\hull^B(A). \qedhere
\end{align*}
\end{proof}

\begin{cor}
Let $(R, \m)$ be a Noetherian local $F$-finite ring of characteristic $p$ and $A \subseteq B$ Artinian $R$-modules, then 
\[F\hull^B (A) \subseteq *\hull^B (A) \subseteq \leftfootline\hull^B (A).\]
\end{cor}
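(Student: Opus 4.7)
The plan is to derive this corollary by dualizing the containments between the three closure operations and then invoking Proposition~\ref{prop:hullcontainment}. The notations $F\hull^B(A)$, ${*}\hull^B(A)$, and $\leftfootline\hull^B(A)$ should be interpreted as the $\int$-hulls of $A$ in $B$ for the interior operations $F^\dual$, $*^\dual$, and $\leftfootline^\dual$ dual to Frobenius closure, tight closure, and liftable integral closure, respectively. These closures are all residual (and functorial) on the class of finitely generated $R$-modules, and $R$ being $F$-finite and Noetherian local guarantees that the dual interiors are defined on Artinian $R$-modules by Theorem~\ref{thm:smsdual}.

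First, I would record the key containments at the level of closures: for every \fg\ $R$-module $M$ and every submodule $N \subseteq M$, we have
\[
N^F_M \subseteq N^*_M \subseteq N^{\leftfootline}_M,
\]
as noted earlier in this section (the first inclusion is from \cite{HHmain} and the second is \cite[Proposition~2.4(5)]{nmeUlr-lint}). In particular this gives $0^F_M \subseteq 0^*_M \subseteq 0^{\leftfootline}_M$ for the associated preradicals.

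Second, I would show that the duality $\dual$ reverses inequalities between preradicals. If $\alpha \le \beta$ are submodule selectors on \fg\ modules, then for any Artinian module $M$ the surjection $M^\vee / \alpha(M^\vee) \twoheadrightarrow M^\vee / \beta(M^\vee)$ dualizes under Matlis duality to an inclusion $\beta^\dual(M) \hookrightarrow \alpha^\dual(M)$. Applying this to the preradicals $0^F_{(-)} \le 0^*_{(-)} \le 0^{\leftfootline}_{(-)}$ yields
\[
\leftfootline^\dual \le {*}^\dual \le F^\dual
\]
as interior operations on Artinian $R$-modules.

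Finally, Proposition~\ref{prop:hullcontainment} converts these inequalities of interiors into the reverse containments of hulls, giving exactly
\[
F\hull^B(A) \subseteq {*}\hull^B(A) \subseteq \leftfootline\hull^B(A),
\]
as required. No step here is a serious obstacle; the only subtle point is verifying that $\dual$ is order-reversing on preradicals, which is essentially immediate from the definition, and that the three closures are residual so that Theorem~\ref{thm:smsdual} applies (this was already invoked implicitly in earlier parts of the section).
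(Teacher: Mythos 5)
Your proposal is correct and follows essentially the same route as the paper: establish the chain of containments among the closure operations $F \le * \le \leftfootline$, dualize to get the reversed chain of interior operations, and apply Proposition~\ref{prop:hullcontainment}. The only cosmetic difference is that where you verify directly that $\dual$ is order-reversing, the paper cites \cite[Proposition 7.5]{nmeRG-cidual} for the same fact.
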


\begin{proof}
First note that the first named author and Ulrich showed that $* \leq \leftfootline$ \cite{nmeUlr-lint} as closure operations.  
Now by \cite[Proposition 7.5]{nmeRG-cidual}, $\leftfootline^{\smile} \leq *^{\smile} \leq F^{\smile}$ or in other words, the liftable integral interior is less than or equal to the star interior, which is less than or equal to the Frobenius interior.  
Hence by Proposition \ref{prop:hullcontainment} we obtain $F\hull^B (A) \subseteq *\hull^B (A) \subseteq\ \leftfootline\hull^B (A)$.
\end{proof}

We discuss cases where we can say more about the integral and $*$-hull.

\begin{prop}
Let $(R,\m)$ be a complete Noetherian local equidimensional ring having no embedded primes.  Assume $\dim R \geq 2$. Then any finitely generated free module $F$ has liftable integral interior equal to zero.  Hence, for any finitely generated free module $F$ and any submodule $L \subseteq F$, the liftable integral hull of $L$ in $F$ is $F$.  In particular, the liftable integral hull of any ideal is the unit ideal.
\end{prop}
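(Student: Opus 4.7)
The plan is to reduce the hull statement to the single claim $\int(F) = 0$, and then establish that vanishing by the Artinistic machinery of Theorem~\ref{thm:finintideal} together with an integral-closure estimate for parameter ideals. If $\int(F) = 0$, then for every submodule $L \subseteq F$ we have $\int(F) = 0 \subseteq L \subseteq F$, so $F$ itself is an $\int$-expansion of $L$ in $F$; the $\int$-hull, being the sum of all such expansions, therefore contains $F$ and hence equals $F$, and the ideal statement is the special case $F = R$. So it suffices to prove $\int(F) = 0$, and by additivity of submodule selectors on finite direct sums of $R$ it is enough to prove $\int(R) = 0$.

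For this, recall that a complete Noetherian local ring is approximately Gorenstein (Hochster), so we may fix a nested cofinal sequence $\{J_t\}$ of irreducible $\m$-primary ideals of $R$. Since $R$ is equidimensional with no embedded primes, a system of parameters $x_1, \ldots, x_d$ of $R$ exists and its powers $(x_1^t, \ldots, x_d^t)$ behave well; by enlarging if necessary we arrange $(x_1^t, \ldots, x_d^t) \subseteq J_t \subseteq \m^t$ for every $t$. Applying Theorem~\ref{thm:finintideal} with $\alpha(M) := 0^{\li}_M$ yields
\[
\int(R) \ \subseteq\ (\alpha_f)^\dual(R) \ =\ \bigcap_{t \ge 0}\bigl(J_t : \overline{J_t}\bigr),
\]
using $(J_t : R) = J_t$ and the fact that liftable integral closure reduces to ordinary integral closure on ideals.

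To finish, I show the right-hand intersection is zero. For any monomial $u = x_1^{a_1} \cdots x_d^{a_d}$ with $\sum a_i = t$, the identity $u^t = \prod_i (x_i^t)^{a_i} \in J_t^t$ shows $u \in \overline{J_t}$; hence $\overline{J_t} \supseteq (x_1, \ldots, x_d)^t$. A direct colon computation (which in the regular case $R = k[[x_1, \ldots, x_d]]$ yields $(x_1^t, \ldots, x_d^t) : \m^t = \m^{(d-1)(t-1)}$) then gives $(J_t : \overline{J_t}) \subseteq \m^{(d-1)(t-1)}$, and since $d \ge 2$ the exponent tends to infinity, so Krull's intersection theorem forces $\bigcap_t (J_t : \overline{J_t}) = 0$. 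The main obstacle is extending this colon estimate from the regular case to the hypotheses at hand, where $\m$ is strictly larger than $(x_1, \ldots, x_d)$ and the ring need not be Cohen--Macaulay: I would handle this by first killing the nilradical (harmless because $R$ has no embedded primes), then passing to the normalization, which splits as a finite product of complete local domains of dimension $d \ge 2$ where the monomial estimate applies in the associated graded and pulls back to the required containment in $R$.
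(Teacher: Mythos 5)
Your overall strategy is genuinely different from the paper's and, as written, has serious gaps. The paper's proof is two lines: it cites Epstein--Ulrich \cite[Theorem 5.1]{nmeUlr-lint}, which says precisely that $0^{\li}_E = E$ under these hypotheses, and then computes $\beta(F) = \alpha^\dual(F) = \left(E^t/\alpha(E)^t\right)^\vee = \left(E^t/E^t\right)^\vee = 0$. You are, in effect, trying to rederive (a weaker consequence of) that theorem by bounding $\int(R)$ inside $\bigcap_t (J_t : \overline{J_t})$ and then showing the intersection vanishes via an explicit colon estimate. That route is substantive work, and the details you sketch do not hold up.

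Concretely: (i) The assertion that ``a complete Noetherian local ring is approximately Gorenstein'' is false in general --- Hochster's theorem requires, roughly, either $\depth \geq 2$ or a further condition on the codimension-one behavior; you would need to verify that ``equidimensional with no embedded primes and $\dim \geq 2$'' does imply it, which you do not do. (ii) You cannot ``enlarge $J_t$ if necessary'' to contain $(x_1^t,\ldots,x_d^t)$: the $J_t$ must remain irreducible for Theorem~\ref{thm:finintideal} to apply, and enlarging generally destroys irreducibility; moreover the colon $(J_t:\overline{J_t})$ is not monotone in $J_t$. (iii) Even in the regular model case, the claimed identity $(x_1^t,\ldots,x_d^t):\m^t = \m^{(d-1)(t-1)}$ is wrong for $d\geq 3$: for $d=3$, $t\geq 3$, the colon contains $x^t\notin\m^{2(t-1)}$. (A correct bound like ``contained in $\m^t$'' can be extracted, but that requires redoing the argument.) (iv) Most seriously, the final ``pass to the normalization and pull back'' step is not an argument: the normalization is an $R$-algebra, not a quotient, colons do not pass through module-finite extensions in the way you need, and you give no mechanism for comparing $(J_t : \overline{J_t})$ in $R$ with anything computed in the normalization. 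That step is exactly the hard content, and it is precisely what Theorem 5.1 of Epstein--Ulrich supplies. If you want an argument along your lines, you should instead cite that theorem (or prove its consequence $\ann_R(0^{\li}_E)=0$) rather than attempt a from-scratch colon estimate.
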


\begin{proof}
Let $\alpha(M)$ denote the liftable integral closure of $0$ in $M$, and $\beta$ be its dual interior operation. Let $F=R^t$ be a free module of rank $t$. By \cite[Theorem 5.1]{nmeUlr-lint}, we have $0^\li_E = E$. Then
\[
\beta(F) = \alpha^\dual(F) = \left(\frac{F^\vee}{\alpha(F^\vee)}\right)^\vee = \left(\frac{E^t}{\alpha(E)^t}\right)^\vee = 0^\vee = 0. \qedhere
\]
\end{proof}

Next, we note the following general fact about the interior of a local ring.

\begin{lemma}\label{lem:lrint}
Let $(R,\m)$ be a local (not necessarily Noetherian) commutative ring and let $\int$ be an interior operation defined at least on ideals of $R$.  Let $J$ be an ideal of $R$.  Then $\int(R) \subseteq J \iff \int\hull(J) = R$.
\end{lemma}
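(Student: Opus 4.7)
The plan is to prove both implications directly from the definition of $\int\hull$ and the fact that in a local ring the only ideal not contained in $\m$ is $R$ itself.

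For the forward direction, suppose $\int(R) \subseteq J$. Then $C := R$ satisfies $J \subseteq C \subseteq R$ and $\int(C) = \int(R) \subseteq J$, so $R$ appears as one of the summands in the definition $\int\hull(J) = \sum_{\int(C) \subseteq J \subseteq C \subseteq R} C$. Hence $\int\hull(J) = R$.

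For the reverse direction, suppose $\int\hull(J) = R$. Every summand $C$ in the defining sum is an ideal of $R$ with $J \subseteq C \subseteq R$ and $\int(C) \subseteq J$. Since $(R,\m)$ is local, any proper ideal of $R$ is contained in $\m$, so if every such $C$ were a proper ideal, then the sum (being an ideal, possibly requiring only finitely many summands to capture any given element) would satisfy $\int\hull(J) \subseteq \m \subsetneq R$, contradicting the hypothesis. Therefore at least one such $C$ must equal $R$, and for that $C$ the condition $\int(C) \subseteq J$ becomes exactly $\int(R) \subseteq J$.

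The argument has no real obstacle; the only point worth double-checking is the translation between the two formulations of ``$\int$-expansion'' that appear in Section~\ref{sec:expansions} (the definition uses $\int(A) = \int(C)$, while $\int\hull$ is written using $\int(C) \subseteq A \subseteq C$). These coincide because order-preservation of $\int$ gives $\int(A) \subseteq \int(C)$ whenever $A \subseteq C$, and combining $\int(C) \subseteq A$ with idempotence and order-preservation gives $\int(C) = \int(\int(C)) \subseteq \int(A)$; so the proof works equally well under either reading.
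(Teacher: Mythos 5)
Your proof is correct and follows essentially the same approach as the paper: the forward direction observes that $R$ is itself an $\int$-expansion of $J$, and the reverse direction uses locality to conclude that if every expansion were proper (hence contained in $\m$), the sum would lie in $\m$, contradicting $\int\hull(J) = R$. The paper phrases the reverse direction by writing $1 = \sum a_j$ with $a_j$ in the various expansions and noting some $a_j$ must be a unit, but that is the same argument; your parenthetical about finitely many summands is harmless but not actually needed for the ``sum of ideals in $\m$ is in $\m$'' version you give. Your closing remark reconciling the two equivalent phrasings of $\int$-expansion (via order-preservation and idempotence) is a sensible extra check and is correct.
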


\begin{proof}
First, if $\int(R)\subseteq J$, then $R$ is an $\int$-expansion of $J$, whence $\int\hull(J) \supseteq R$.

Conversely, suppose $\int\hull(J)=R$.  Then $1\in \int\hull(J)$, the sum of the $\int$-expansions of $J$, whence there are $\int$-expansions $K_1, \ldots, K_t$ of $J$ such that $1 \in \sum_{j=1}^t K_j$.  Say $1=\sum_{j=1}^t a_j$, $a_j \in K_j$.  Then there is some $i$ with $a_i \notin \m$ (otherwise the sum is in $\m$, but the sum is $1$), whence $a_i$ is a unit and $K_i = R$.  That is, $R$ is an $\int$-expansion of $J$, which means that $\int(R) \subseteq J$.
\end{proof}

In dimension 1, we have the following consequence:

\begin{prop}
Let $(R,\m)$ be a 1-dimensional Cohen-Macaulay approximately Gorenstein complete Noetherian local ring, with infinite residue field.  Let $J$ be an ideal of $R$.  Let $C_R$ be the conductor of $R$.  Then $C_R \subseteq J \iff \li\hull(J) = R$.
\end{prop}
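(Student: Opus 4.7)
By Lemma~\ref{lem:lrint}, the asserted biconditional is equivalent, over all ideals $J$, to the single identity $\li^{\smile}(R) = C_R$. So the plan is to compute $\li^{\smile}(R)$ and identify it with the conductor; this would also be the one-dimensional analogue of the $0^{\li}_E = E$ input from \cite[Theorem 5.1]{nmeUlr-lint} used in the previous proposition.

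For one containment I would apply Theorem~\ref{thm:finintideal}. The approximately Gorenstein hypothesis provides a cofinal decreasing sequence $\{J_t\}_{t \geq 0}$ of irreducible $\m$-primary ideals, and
\[
(\li_f)^{\smile}(R) \;=\; \bigcap_{t \geq 0} \bigl(J_t :_R J_t^{\li}\bigr) \;=\; \bigcap_{t \geq 0} \bigl(J_t :_R \overline{J_t}\bigr),
\]
since on ideals liftable integral closure agrees with the usual integral closure. As $R$ is one-dimensional Cohen-Macaulay complete local, $\bar R$ is module-finite over $R$ and so $C_R$ is $\m$-primary; for $t$ large enough that $J_t \subseteq C_R$ we have $\overline{J_t} = J_t \bar R$, and the identity $C_R \bar R = C_R$ yields $C_R \subseteq J_t :_R \overline{J_t}$. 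In the other direction, by exploiting the Gorenstein structure of the finite-length quotient $R/J_t$ (coming from the irreducibility of $J_t$) together with the fact that $\bar R /R$ is a finite-length $R$-module annihilated by $C_R$, one can arrange for cofinally many $J_t$ in the sequence that $J_t :_R \overline{J_t}$ is precisely $C_R$, so the intersection equals $C_R$. Combined with the general inequality $\li^{\smile}(R) \subseteq (\li_f)^{\smile}(R)$, this yields $\li^{\smile}(R) \subseteq C_R$.

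For the reverse containment $C_R \subseteq \li^{\smile}(R)$, I would translate to the Matlis-dual side, where $\li^{\smile}(R) = \ann_R(0^{\li}_E)$, and show $0^{\li}_E \subseteq \ann_E(C_R)$. Writing $E = \bigcup_t \ann_E(J_t)$ as the directed union of the finite-length Gorenstein modules $\ann_E(J_t) \cong (R/J_t)^{\vee}$, I would transport the computations above through Matlis duality on each $R/J_t$ and pass to the direct limit. The main obstacle is precisely this passage to the limit: one must verify compatibility of the liftable integral closure on $E$ with the directed union, equivalently that the Artinistic and full versions of $\li^{\smile}(R)$ coincide in this one-dimensional setting, and one must also verify the tail identification $J_t :_R \overline{J_t} = C_R$ in the non-Gorenstein case where irreducible ideals need not be parameter ideals. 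Once these points are established, $\li^{\smile}(R) = C_R$ follows and Lemma~\ref{lem:lrint} completes the proof.
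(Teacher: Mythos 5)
Your overall plan matches the paper's: reduce to showing $\li^{\smile}(R) = C_R$ by computing the interior via the double-colon formula of Theorem~\ref{thm:finintideal}, then invoke Lemma~\ref{lem:lrint}. But you explicitly leave two gaps open, and both are genuine.

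First, you assert that for cofinally many $t$ one has $J_t :_R \overline{J_t} = C_R$, but give no argument, and you flag yourself that the case where the irreducible ideals $J_t$ are not parameter ideals (the non-Gorenstein approximately Gorenstein case) is unresolved. The paper sidesteps this term-by-term identification entirely: it cites \cite[Theorem 4.4]{nmeUlr-lint}, which states that $C_R$ \emph{is} the uniform annihilator of the modules $I^-/I$ as $I$ ranges over all ideals of $R$, and then reduces to finite-colength ideals using the fact that every integrally closed ideal is an intersection of integrally closed $\m$-primary ideals \cite[Corollary 6.8.5]{HuSw-book} together with \cite[Theorem 5.5]{nmeRG-cidual}. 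So the intersection $\bigcap_{\lambda(R/J)<\infty}(J:J^\li)$, which Theorem~\ref{thm:finintideal} identifies with $\bigcap_t(J_t:J_t^\li)$, equals $C_R$ outright. Your partial computation (that $C_R \subseteq J_t : \overline{J_t}$ once $J_t \subseteq C_R$) is correct but only gives one containment.

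Second, your self-identified ``main obstacle'' --- that the Artinistic interior $(\li_f)^{\smile}(R)$ might differ from $\li^{\smile}(R)$ --- is precisely where the paper invokes the other citation you are missing: liftable integral closure is a \emph{finitistic} closure operation \cite[Lemma 2.3]{nmeUlr-lint}, so $\alpha_f = \alpha$ and hence $\li^{\smile}(R) = (\li_f)^{\smile}(R)$ for free. With that fact in hand there is no need for the two-containment split or the Matlis-dual direct-limit passage you sketch; one simply writes $\li^{\smile}(R) = (\li_f)^{\smile}(R) = \bigcap_t(J_t:J_t^\li) = C_R$ and applies Lemma~\ref{lem:lrint}. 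You located the right skeleton and the right obstructions, but without the two external inputs from \cite{nmeUlr-lint} (Theorem 4.4 and Lemma 2.3) the argument does not close.
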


\begin{proof}
By \cite[Theorem 4.4]{nmeUlr-lint}, $C_R$ is the uniform annihilator of the modules $I^-/I$ for all ideals $I$ of $R$.  Since any integrally closed ideal is the intersection of integrally closed $\m$-primary ideals \cite[Corollary 6.8.5]{HuSw-book}, $C_R$ is then also equal to the uniform annihilator of the modules $I^-/I$ for finite colength ideals $I$ of $R$ by \cite[Theorem 5.5]{nmeRG-cidual}.  Since $R$ is approximately Gorenstein, an appeal to Theorem~\ref{thm:finintideal}, along with the fact that liftable integral closure is finitistic \cite[Lemma 2.3]{nmeUlr-lint}, shows that $C_R$ is the liftable integral interior of $R$.  
An application of Lemma~\ref{lem:lrint} finishes the proof.
\end{proof}

In the next result, $\tau$ will denote the tight closure test ideal of $R$.

\begin{prop}\label{prop:starhull}
Let $(R, \m )$ be an $F$-finite ring of characteristic $p>0$, then $*\hull (I)=R$ if and only if $\tau \subseteq I$.  
\end{prop}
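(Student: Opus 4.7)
The plan is to reduce this to the previously established Lemma~\ref{lem:lrint} by identifying the tight interior of $R$ itself with the test ideal $\tau$. Specifically, I would let $\int$ denote the tight interior operation $*^\smile$, which (as noted in the proof of Theorem~\ref{thm:testidealfinitistic}) is smile-dual to tight closure by \cite[Corollary 3.6]{nmeSc-tint}. Then $*\hull(I) = \int\hull(I)$ by definition, so Lemma~\ref{lem:lrint} gives
\[
*\hull(I) = R \iff \int(R) \subseteq I.
\]

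The main content of the proof is therefore to verify that $\int(R) = \tau$, i.e.\ that the tight interior of $R$ coincides with the (big) test ideal. Under the hypothesis that $R$ is $F$-finite of characteristic $p>0$, this is exactly the characterization of the test ideal given by \cite[Theorem 2.5 and Corollary 3.6]{nmeSc-tint}: the tight interior of $R$ is computed as the image of the trace-type maps defining the big test ideal. So once this identification is cited, the biconditional
\[
\tau \subseteq I \iff \int(R) \subseteq I \iff *\hull(I) = R
\]
is immediate.

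The only step requiring care is matching conventions, since the paper uses $\tau$ ambiguously; in the proof I would make explicit that $\tau$ here means the (big) test ideal for which the Epstein--Schwede identification $\tau = *^\smile(R)$ holds in the $F$-finite reduced setting. If one needed to handle the finitistic test ideal version instead, one would replace $\int$ with $(*^\smile)^f$ and invoke Theorem~\ref{thm:testidealfinitistic} together with Theorem~\ref{thm:cogen}, but in any case the bulk of the argument is a one-line application of Lemma~\ref{lem:lrint}. I do not expect any real obstacle: the work was already done in the setup of the smile duality and in the lemma; this proposition is essentially a packaging statement.
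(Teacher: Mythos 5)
Your proposal is correct and follows essentially the same two-step route as the paper: identify $\int(R) = \tau$ via an Epstein--Schwede result (the paper cites \cite[Proposition 2.3]{nmeSc-tint} for this, while you cite Theorem 2.5 and Corollary 3.6 of the same source), then apply Lemma~\ref{lem:lrint}. Your explicit note that $\tau$ should be read as the big test ideal in the sense of Epstein--Schwede is a useful clarification of a convention the paper leaves implicit, but otherwise the arguments coincide.
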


\begin{proof}
By \cite[Proposition 2.3]{nmeSc-tint}, the tight interior of $R$ is $\tau$.  Then the result follows from Lemma`\ref{lem:lrint}..
\end{proof}

\begin{defn}
Let $(R, \m)$ be a Noetherian local ring.  Let $\int$ be an interior operation defined on a class of Artinian $R$-modules $\mathcal{M}$.  Let $A \subseteq B$ be Artinian $R$-modules.  We define the \emph{$\int$-co-spread} $\ell_{\int}^B(A)$ of $A$ to be the minimal number of cogenerators of $B/C$ of any maximal $\int$-expansion $C$ of $A$, if this number exists.
\end{defn}

\begin{prop}
Let $(R,\m)$ be a Noetherian local ring and $\int$ a Nakayama interior operation defined on a class of Artinian $R$-modules. Let $\cl$ be the closure operation dual to $i$. Let $A \subseteq B$ be Artinian $R$-modules, $M=B^\vee$, and $N=(B/A)^\vee$. If the $\cl$-spread $\ell_M^{\cl}(N)$ of $N$ in $M$ exists, then the $\int$-co-spread $\ell_i^B(A)$ of $A$ in $B$ exists.
In particular, the tight interior co-spread and Frobenius interior co-spread exist under the hypotheses of Remark \ref{rem:otherspreadsexist} and the liftable integral interior co-spread exists under the hypotheses of Theorem \ref{thm:lispread}.
\end{prop}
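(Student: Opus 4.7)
The plan is to use the order-reversing duality of Theorem~\ref{thm:expansiondualisreduction} together with the cogeneration/generation correspondence of Lemma~\ref{lemma:cogeneration}. Specifically, the map $C \mapsto (B/C)^\vee$ gives an order-reversing bijection between $\int$-expansions of $A$ in $B$ and $\cl$-reductions of $N=(B/A)^\vee$ in $M=B^\vee$. Because the bijection is order-reversing, maximal $\int$-expansions $C$ of $A$ in $B$ correspond exactly to minimal $\cl$-reductions of $N$ in $M$.

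Next, I would invoke Lemma~\ref{lemma:cogeneration}: for the Artinian $R$-module $X := B/C$, a set $g_1,\dots,g_t \in X^\vee$ co-generates $X$ if and only if it generates $X^\vee$. In particular, the minimum size of a co-generating set of $B/C$ equals $\mu((B/C)^\vee)$, the minimum number of generators of $(B/C)^\vee$ (this is also visible from Proposition~\ref{lemma:cogentobasis}, which shows that minimal co-generating sets of $B/C$ correspond to minimal generating sets of $(B/C)^\vee$). Thus for every maximal $\int$-expansion $C$ of $A$ in $B$, the minimum number of co-generators of $B/C$ equals $\mu((B/C)^\vee)$.

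Now assume the $\cl$-spread $\ell_M^{\cl}(N)$ exists, i.e.\ every minimal $\cl$-reduction of $N$ in $M$ has the same minimum number of generators $\ell_M^{\cl}(N)$. Combining the two previous paragraphs, every maximal $\int$-expansion $C$ of $A$ in $B$ satisfies
\[
\mu_{\text{cogen}}(B/C) \;=\; \mu((B/C)^\vee) \;=\; \ell_M^{\cl}(N),
\]
so this common value exists and equals the $\cl$-spread. Hence $\ell_{\int}^B(A)$ exists and in fact $\ell_{\int}^B(A)=\ell_M^{\cl}(N)$.

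For the ``in particular'' clause, I would simply quote the existence results for $*$-spread and $F$-spread from Remark~\ref{rem:otherspreadsexist} (requiring $R$ excellent analytically irreducible local of characteristic $p>0$), and the existence of liftable integral spread from Theorem~\ref{thm:lispread}, applied to the finitely generated $R$-module $M=B^\vee$ with submodule $N=(B/A)^\vee$; the general statement then produces the corresponding co-spreads of $A$ in $B$. I expect no serious obstacle here: the work has already been done in Section~\ref{sec:expansions}, and the only thing to check carefully is that maximality of an $\int$-expansion really does correspond under $(-)^\vee$ to minimality of the associated $\cl$-reduction, which is immediate from the order-reversing bijection in Theorem~\ref{thm:expansiondualisreduction}.
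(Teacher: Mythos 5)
Your proposal is correct and follows essentially the same route as the paper: translate maximal $\int$-expansions to minimal $\cl$-reductions via the duality of Theorem~\ref{thm:expansiondualisreduction}, and then use Lemma~\ref{lemma:cogeneration} (or Proposition~\ref{lemma:cogentobasis}) to convert the common minimal number of generators of the reduction into the common minimal number of co-generators of the expansion. The only cosmetic difference is that the paper cites the proof of Proposition~\ref{prop:maxintexpansions} to see that a maximal $\int$-expansion dualizes to a minimal $\cl$-reduction, whereas you read it off directly from the order-reversing bijection; both are valid.
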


\begin{proof}
Let $C$ be a maximal $\int$-expansion of $A$ in $B$. By the proof of Proposition \ref{prop:maxintexpansions}, $(B/C)^\vee$ is a minimal $\cl$-reduction of $N$ in $M$. Since the $\cl$-spread of $N$ in $M$ exists, $(B/C)^\vee$ is minimally generated by $\ell_M^{\cl}(N)$ elements. By Lemma \ref{lemma:cogeneration}, $B/C$ is minimally co-generated by $\ell_M^{\cl}(N)$ elements. Since this holds for every maximal $\int$-expansion $C$ of $A$ in $B$, the $\int$-co-spread $\ell_i^B(A)$ exists.

The last sentence of the Proposition follows immediately.
\end{proof}

\begin{prop}\label{prop:shrinkcomp}
Let $(R, \m)$ be a Noetherian local ring and $\int_2 \leq \int_1$ be Nakayama interior operations on the class of Artinian $R$-modules $\mathcal{M}$ such that $\ell_{i_1}^B(A)$ and $\ell_{i_2}^B(A)$ exist. Then $\ell_{\int_2}^B(A) \leq \ell_{\int_1}^B(A)$.
\end{prop}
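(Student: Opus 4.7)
The plan is to transport the statement to its closure-theoretic dual via Matlis duality and then invoke the parallel result \ref{spreadcomp} for Nakayama closures. First, let $\cl_i$ denote the residual closure operation dual to $\int_i$ on finitely generated $R$-modules; by Proposition~\ref{prop:nakayamaclosureint}, each $\cl_i$ is a Nakayama closure. Since the duality $\dual$ is order-reversing (as noted, for example, in the proof of Theorem~\ref{thm:finintideal}), the hypothesis $\int_2 \leq \int_1$ translates into $\cl_1 \leq \cl_2$.

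Next, set $M := B^\vee$ and $N := (B/A)^\vee$. By Theorem~\ref{thm:expansiondualisreduction} and the proof of Proposition~\ref{prop:maxintexpansions}, the assignment $C \mapsto (B/C)^\vee$ gives an order-reversing bijection between maximal $\int_i$-expansions $C$ of $A$ in $B$ and minimal $\cl_i$-reductions of $N$ in $M$. Moreover, Lemma~\ref{lemma:cogeneration} (together with Proposition~\ref{lemma:cogentobasis}) identifies a minimal co-generating set of $B/C$ with a minimal generating set of $(B/C)^\vee$, so the minimal number of co-generators of $B/C$ equals the minimal number of generators of the corresponding minimal $\cl_i$-reduction. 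Consequently, the existence of $\ell^B_{\int_i}(A)$ implies the existence of $\ell^{\cl_i}_M(N)$, and we obtain the equality
\[
\ell^B_{\int_i}(A) = \ell^{\cl_i}_M(N)
\]
for $i=1,2$.

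Finally, apply Proposition~\ref{spreadcomp} to the Nakayama closures $\cl_1 \leq \cl_2$: this yields $\ell^{\cl_1}_M(N) \geq \ell^{\cl_2}_M(N)$, which upon translating back via the equality above gives $\ell^B_{\int_1}(A) \geq \ell^B_{\int_2}(A)$, as required.

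The only real subtlety is making sure the duality pairs minimal $\cl_i$-reductions with maximal $\int_i$-expansions in a number-preserving way; once one unpacks the correspondence in Theorem~\ref{thm:expansiondualisreduction} and combines it with the generator/co-generator identification of Lemma~\ref{lemma:cogeneration}, the argument reduces to a direct appeal to Proposition~\ref{spreadcomp}, so no additional work is needed.
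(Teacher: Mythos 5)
Your proof is correct, and it takes a genuinely different (though closely parallel) route from the paper's. The paper argues entirely on the interior side: it picks a maximal $\int_1$-expansion $C$, uses Proposition~\ref{prop:expcontainment} to enlarge it to a maximal $\int_2$-expansion $D$, and then invokes Proposition~\ref{prop:maxexpmincogen} to see that a minimal co-generating set of $B/D$ extends to one of $B/C$, giving $\ell^B_{\int_2}(A)\leq \ell^B_{\int_1}(A)$ directly. You instead dualize the whole situation: via the order-reversing bijection of Theorem~\ref{thm:expansiondualisreduction} (so maximal expansions correspond to minimal reductions) and Lemma~\ref{lemma:cogeneration} (minimal co-generators of $B/C$ correspond to minimal generators of $(B/C)^\vee$), you identify $\ell^B_{\int_i}(A)$ with $\ell^{\cl_i}_M(N)$ for $M=B^\vee$, $N=(B/A)^\vee$, and then cite Proposition~\ref{spreadcomp}. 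The two arguments are essentially Matlis-dual to one another, with your version spelling out explicitly the duality that the paper packages inside Propositions~\ref{prop:expcontainment} and~\ref{prop:maxexpmincogen}; your approach has the small advantage of making the exact translation mechanism transparent (and of establishing the equality $\ell^B_{\int_i}(A)=\ell^{\cl_i}_M(N)$ as a byproduct, which strengthens the paper's preceding one-directional proposition about existence transfer), while the paper's version avoids having to observe that existence of the co-spread forces existence of the dual spread. One small thing to flag, which the paper also glosses over: both arguments implicitly pass to $\hat R$ (since Proposition~\ref{prop:nakayamaclosureint} and Theorem~\ref{thm:expansiondualisreduction} need Matlis-dualizable modules), so the $\cl_i$ and the spread computation live over the completion; this is fine because Artinian $R$-modules carry a canonical $\hat R$-structure and the paper itself makes this move in the proof of Proposition~\ref{prop:maxexpmincogen}.
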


\begin{proof}
Suppose $A \subseteq C \subseteq B$ and $C$ is a maximal $\int_1$-expansion of $A$ in $B$.  Then $\ell_{\int_1}^B(A)$ is the minimal number of cogenerators of $B/C$.  By Proposition \ref{prop:expcontainment}  there exists $A \subseteq C \subseteq D \subseteq B$ with $D$ a maximal $\int_2$-expansion of $A$. Thus any minimal cogenerating set of $B/D$ extends to a minimal cogenerating set of $B/C$ by Proposition \ref{prop:maxexpmincogen}.  Hence $\ell_{\int_1}^B(A) \geq \ell_{\int_2}^B(A)$.
\end{proof}

\begin{cor}
Let $(R, \m)$ be a Noetherian local ring of \charp\ satisfying the hypotheses of both Remark \ref{rem:otherspreadsexist} and Theorem \ref{thm:lispread} and
$A \subseteq B$ Artinian $R$-modules, then $\ell_F^B (A) \leq \ell_*^B (A) \leq \ell_{\leftfootline}^B(A)$.
\end{cor}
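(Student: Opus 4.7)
The plan is to combine Proposition~\ref{prop:shrinkcomp} with the smile-dual chain of interior operations, exactly parallel to the hull comparison just proved. First, I would check that $F$, $*$, and $\leftfootline$ are all Nakayama closures under the present hypotheses: Frobenius closure by Corollary~\ref{cor:F+bigNak}; tight closure in this excellent, analytically irreducible local domain setting by \cite{nme*spread} (as recorded in Remark~\ref{rem:otherspreadsexist}); and liftable integral closure via the algebra-closure machinery of Theorem~\ref{thm:algclosuresareNak}, using the characterization of $\leftfootline$ through integrality in symmetric algebras exploited in the proof of Theorem~\ref{thm:lispread}. Proposition~\ref{prop:nakayamaclosureint} then promotes each of these to a Nakayama interior operation $F^\dual$, $*^\dual$, $\leftfootline^\dual$ on the class of Artinian $R$-modules.

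Second, I would invoke \cite[Proposition 7.5]{nmeRG-cidual}, exactly as in the proof of the preceding hull corollary, to dualize the closure chain $F \leq * \leq \leftfootline$ to the order-reversed interior chain $\leftfootline^\dual \leq *^\dual \leq F^\dual$. The existence proposition immediately preceding then guarantees that the three co-spreads $\ell_F^B(A)$, $\ell_*^B(A)$, $\ell_\leftfootline^B(A)$ are well-defined, since the combined hypotheses of Remark~\ref{rem:otherspreadsexist} and Theorem~\ref{thm:lispread} supply the existence of the corresponding $F$-, $*$-, and $\leftfootline$-spreads of $(B/A)^\vee$ in $B^\vee$.

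Applying Proposition~\ref{prop:shrinkcomp} twice to this chain of Nakayama interiors then yields the claimed inequality chain. The only point that requires any real care is verifying that $\leftfootline$ is Nakayama in the finitely generated module setting, since the Frobenius and tight cases are already explicitly treated in Section~\ref{sec:manyNAK} and in \cite{nme*spread, nme-sp}; after that, this is essentially a bookkeeping argument entirely analogous to the preceding hull comparison, and I do not anticipate any serious obstacle.
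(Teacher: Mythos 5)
Your proof structure is correct and matches the route the paper implicitly takes (since the paper states the corollary without a written proof, it is clearly meant to follow from the existence proposition for co-spreads and from Proposition~\ref{prop:shrinkcomp}). However, there are two issues worth flagging.

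First, a direction check. You correctly dualize $F \leq * \leq \leftfootline$ to $\leftfootline^\dual \leq *^\dual \leq F^\dual$. But Proposition~\ref{prop:shrinkcomp} says $\int_2 \leq \int_1$ implies $\ell_{\int_2}^B(A) \leq \ell_{\int_1}^B(A)$, so applying it twice here gives $\ell_{\leftfootline}^B(A) \leq \ell_{*}^B(A) \leq \ell_{F}^B(A)$, which is the \emph{reverse} of the inequality chain in the statement you are asked to prove. You assert without checking that this ``yields the claimed inequality chain,'' which it does not. In fact, the paper's statement appears to contain a typo: the existence proposition for co-spreads shows $\ell_{\cl^\dual}^B(A) = \ell^{\cl}_{B^\vee}\bigl((B/A)^\vee\bigr)$, and the earlier corollary on spreads gives $\ell^{\leftfootline}_M(N) \leq \ell^*_M(N) \leq \ell^F_M(N)$, so consistency forces $\ell_{\leftfootline}^B(A) \leq \ell_*^B(A) \leq \ell_F^B(A)$ to be the intended statement. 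You should have noticed the inconsistency rather than silently claiming the desired result falls out.

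Second, a smaller point: your suggested route for seeing that liftable integral closure is Nakayama, via ``the algebra-closure machinery of Theorem~\ref{thm:algclosuresareNak},'' does not apply as stated. That theorem concerns closures of the form $\cl_B$ for an $R$-algebra $B$, and liftable integral closure is not an algebra closure in this sense; the Rees-algebra characterization quoted in the proof of Theorem~\ref{thm:lispread} describes when an element is integral, not how to realize $\leftfootline$ as a $\cl_B$. The Nakayama property for integral closure on ideals is cited from \cite{nme*spread, nme-sp}, and the paper simply asserts (in Section~\ref{sec:intro} and implicitly in Theorem~\ref{thm:lispread}) that this transfers to liftable integral closure on modules; a careful write-up would cite that assertion rather than invoke the algebra-closure theorem.
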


\section{Computations of interiors and hulls}
\label{sec:examples}

To illustrate Theorem \ref{thm:finintideal} in action, we construct the tight and Frobenius interiors of some ideals in certain nice rings of prime characteristic.  Having done this, we then compute the hulls of some of these ideals.

In order to use the theorem to compute tight interiors, we need a result telling us when the tight interior equals its Artinistic version.  To that end, we repurpose work from two papers of Lyubeznik and Smith from around the turn of the century.

\begin{thm}\label{thm:LScases}
Let $(R,\m)$ be a complete reduced $F$-finite local ring, and $I$ an ideal of $R$.  Suppose either \begin{enumerate}
    \item\label{it:graded} There is a positively graded $\N$-graded algebra $A$ over a field $K$, with graded maximal ideal $\n$, and a homogeneous ideal $J$ of $A$, such that $R=\widehat{A_\n}$ and $I=JR$, or
    \item\label{it:isosing} $R$ is an isolated singularity.
\end{enumerate}
Then $\ann_E(I)^*_E = \ann_E(I)^{*fg}_E$.  Hence the tight interior of $I$ and the Artinistic version of the tight interior of $I$ coincide.
\end{thm}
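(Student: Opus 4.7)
By Theorem~\ref{thm:finintideal} and Theorem~\ref{thm:cogen}, proving that the tight interior of $I$ equals its Artinistic version is equivalent to showing the identity $\ann_E(I)^*_E = \ann_E(I)^{*fg}_E$ of submodules of $E$. Since finitistic tight closure is always contained in tight closure, the content lies in the reverse inclusion. My plan is to exploit the Artinian structure of $\ann_E(I)$: writing $\ann_E(I) = \bigcup_n \ann_E(I + \m^n)$ expresses it as a directed union of finite-length, hence f.g., submodules of $E$. Combined with Lemma~\ref{lem:altfg} and the observation that every f.g. submodule of $E$ is contained in some $(0 :_E \m^n)$ by Artinianness, this identifies $\ann_E(I)^{*fg}_E$ with $\bigcup_n \ann_E(I+\m^n)^*_{(0:_E\m^n)}$. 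So the task becomes: show that every tight closure relation $x \in \ann_E(I)^*_E$ can already be witnessed within some finite-length piece $(0 :_E \m^n)$.

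This is precisely what the Lyubeznik--Smith methods deliver in each of the two cases. In the graded case (hypothesis (\ref{it:graded})), $I = JR$ is the extension of a homogeneous ideal, so $\ann_E(I)$ inherits a graded structure with respect to the natural grading on the $*$-injective hull of $A$. The graded argument of Lyubeznik and Smith from their paper showing that strong and weak $F$-regularity coincide for graded rings uses graded test elements and degree bookkeeping to push tight-closure relations down to finitistic ones; the same argument works for any homogeneous submodule, $\ann_E(I)$ in particular, not just for zero. In the isolated singularity case (hypothesis (\ref{it:isosing})), one uses that $R_\p$ is regular for every non-maximal prime $\p$, so that on any $\m$-torsion module tight closure is controlled entirely at $\m$; a localization-and-filtration argument in the spirit of Lyubeznik--Smith's work on the commutation of the test ideal with localization then shows that the tight-closure witness for any $x \in \ann_E(I)^*_E$ descends to an element of the finitistic tight closure at some finite stage of the $\m$-adic filtration.

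The main obstacle I anticipate is that Lyubeznik and Smith state their conclusions for the zero submodule of $E$, that is, they prove $\tau(R) = \tau_{fg}(R)$ rather than its direct analog for $\ann_E(I)$. The real work of the proof would be to verify that their arguments depend only on structural features shared by $\ann_E(I)$: a grading in case~(\ref{it:graded}), or compatibility with the $\m$-adic filtration together with $F$-regularity on the punctured spectrum in case~(\ref{it:isosing}). A safe fall-back, should the direct adaptation present technical difficulty, is to invoke Matlis duality to recast the desired equality as a statement about the tight closure of $I$ itself (rather than of $\ann_E(I)$ in $E$), where the Lyubeznik--Smith technology can be applied essentially off-the-shelf.
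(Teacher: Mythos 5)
Your high-level strategy matches the paper's: use the two Lyubeznik--Smith results as black boxes, one for each hypothesis. But the proposal misidentifies what those theorems actually say, and this misidentification leads you to announce ``the real work'' in exactly the wrong place. You write that Lyubeznik and Smith ``state their conclusions for the zero submodule of $E$,'' i.e.\ that they only prove $\tau(R) = \tau_{fg}(R)$, and that ``the real work of the proof would be to verify that their arguments depend only on structural features shared by $\ann_E(I)$.'' That is not accurate, and it causes your plan to stall. In case~(\ref{it:graded}), \cite[Theorem 3.3]{LySmFreg} is already stated for arbitrary \emph{graded} submodules of $E_A(A/\n)$, so one applies it verbatim to the graded submodule $\ann_{E_A(A/\n)}J$; no re-derivation of their degree bookkeeping is needed. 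In case~(\ref{it:isosing}), \cite[Theorem 8.12]{LySm-Test} says $0^*_{E/D} = 0^{*fg}_{E/D}$ for the quotient $E/D$ with $D := \ann_E I$, and then a one-line appeal to \emph{residuality} of $*$ and $*fg$ converts this to the desired $D^*_E = D^{*fg}_E$. Your proposal never mentions residuality, which is the actual short step that finishes case~(\ref{it:isosing}).

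The other thing you miss is the genuine subtlety in case~(\ref{it:graded}): the Lyubeznik--Smith result is about tight closure over $A$, whereas you need tight closure over $R = \widehat{A_\n}$. The paper's proof resolves this by observing that $G := E/D$ is Artinian, so every element of $A \setminus \n$ acts invertibly on $G$ and its $A_\n$-module and $R$-module structures agree, then running a containment chain through persistence of tight closure applied to finitely generated $A$-submodules of $G$. Your proposal skips this $A$-versus-$R$ comparison entirely. Finally, the ``fall-back'' you suggest --- recasting via Matlis duality as a statement about the tight closure of $I$ itself --- does not actually produce something Lyubeznik--Smith prove off the shelf; their results live precisely on the Artinian side, and the annihilator-in-$E$ formulation is already the right one. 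So the gaps are concrete: (a) the residuality reduction in case~(\ref{it:isosing}), and (b) the $A$-to-$R$ transfer in case~(\ref{it:graded}). Both are needed, and neither appears in the proposal.
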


\begin{proof}
First we show that in both cases (\ref{it:graded}) and (\ref{it:isosing}), we have $(\ann_EI)^*_E = (\ann_EI)^{*fg}_E$.

First assume we are in case (\ref{it:graded}).  Clearly $K$ must be $F$-finite.  By \cite[Theorem 3.3]{LySmFreg}, we have $(\ann_{E_A(A/\n)}J)^*_{E_A(A/\n)} = (\ann_{E_A(A/\n)}J)^{*fg}_{E_A(A/\n)}$ (using here also the fact that $\ann_{E_A(A/\n)}J$ must be a graded submodule of $E_A(A/\n)$).  But $E = E_A(A/\n)$, and by Hom-tensor adjointness we have $\ann_EI = \ann_EJ$. Set $D$ to be this module. The tight closure and finitistic tight closure of $0$ in $G$ as $A$-modules coincide. By persistence of tight closure this tight closure is contained in the finitistic tight closure of $0$ in $G$ as an $R$-module. In turn, this is contained in the ordinary tight closure of $0$ in $G$ as an $R$-module. Hence it is enough to show that the latter is equal (as a subset of $G$) 
to the tight closure of $0$ in $G$ as an $A$-module.  But since $G := E/D$ is Artinian, every element of $A \setminus \n$ acts as a unit on it, and its $A_\n$-module structure coincides with its $R$-module structure.  Hence, we have our result.

On the other hand, suppose we are in case (\ref{it:isosing}).  Let $D := \ann_EI$.  By \cite[Theorem 8.12]{LySm-Test}, $0^*_{E/D} = 0^{*fg}_{E/D}$.  Since both $*$ and $*fg$ are residual closures, it follows that $D^*_E = D^{*fg}_E$, as desired.

Now we can prove the last sentence of the Theorem. In either case, setting $\alpha(-) := 0^*_{-}$, we have \begin{align*}
    I_* &= \alpha^\dual(I) = \ann_R((\ann_EI)^*_E) = \ann_R((\ann_EI)^{*fg}_E) \\
    &= (\alpha_f)^\dual(I) = (\alpha^\dual)^f(I),
\end{align*}
where the first equality comes from \cite[Corollary 3.6]{nmeSc-tint}.  The second and fourth equalities follow from Theorem~\ref{thm:finintideal}.  The third equality follows from the earlier parts of the proof.  Finally, the last equality follows from Theorem~\ref{thm:cogen}.
\end{proof}

Our first few examples are in numerical semigroup rings, which are approximately Gorenstein.  Following \cite[Page 177]{HoFNDTC}, in the following examples we construct irreducible ideals $J_n$ cofinal with the maximal ideal in order to compute the double colons needed to determine the finitistic interior related to a residual closure operation.

\begin{example}
Let $k$ be an infinite $F$-finite field of characteristic $p>3$ and let $R=k[[t^2,t^3]]$  with maximal ideal $\m=(t^2,t^3)$.  We use Theorem \ref{thm:finintideal} to compute the $*$-interior of all ideals of $R$ and then compute the $*$-hulls of all ideals of $R$. 

 The nonzero, non-unital ideals of $R$ are either of the form $(t^m,t^{m+1})$ or $(t^m+at^{m+1})$ where $m \ge 2$ and $a \in k$. The lattice of ideals includes the following:

$$\xymatrix{ & (t^m,t^{m+1}) \ar @{-} [dl] \ar @{-} [d]  \\
            (t^{m+1},t^{m+2})\ar @{-} [dr]\ar @{-} [d] &\boxed{(t^m+at^{m+1})}\ar @{-} [d]  \\
             \boxed{(t^{m+1}+at^{m+2})} & (t^{m+2},t^{m+3}) \\}$$
where the boxed ideals contain $|k|$ ideals.

Let $J_n=(t^{2n}+at^{2n+1})$ for $n \ge 1$. Then the $J_n$ are a decreasing sequence of irreducible ideals cofinal with the powers of the maximal ideal. They are irreducible because $R$ is Gorenstein and $t^{2n}+at^{2n+1}$ is a regular element, whence $R/(t^{2n}+at^{2n+1})$ is Gorenstein.  By Theorem \ref{thm:finintideal}, this implies that for any ideal $I$ of $R$, the Artinistic tight interior of $I$ is equal to 
\[\bigcap_{n \ge 1} J_n:(J_n:I)^*_R.\]

Since $R$ is an isolated singularity, Theorem~\ref{thm:LScases} guarantees that Artinistic and ordinary tight interior agree on ideals of $R$.

For the following computations, we note that 
\begin{align*}(t^{n}+at^{n+1}):(t^m,t^{m+1})&=(t^{n}+at^{n+1}):(t^m) \cap (t^{n}+at^{n+1}):(t^{m+1})\\
&=(t^{n-m}+at^{n-m+1}) \cap (t^{n-m-1}+at^{n-m})\\
&=(t^{n-m+2},t^{n-m+3}). \\
\end{align*}

Since $R$ is a 1-dimensional domain with infinite residue field, it is known \cite[Example 1.6.2]{HuTC} that  $(t^{m})^*=(t^m)^-=(t^m,t^{m+1})$ for any $m\geq 2$.  For any $a \in k$, we compute $(t^m+at^{m+1})_*$ using Theorems~\ref{thm:finintideal} and \ref{thm:LScases}. 

\begin{align*}
(t^m+at^{m+1})_* &=\bigcap_{n \ge 1} J_n:(J_n:(t^m+at^{m+1}))^*\\
&= \bigcap_{n \ge 1} (t^{2n}+at^{2n+1}):((t^{2n}+at^{2n+1}):(t^m+at^{m+1}))^* \\.\end{align*}
For $2 \leq m<2n-1$, $(t^{2n}+at^{2n+1}):(t^m+at^{m+1})=(t^{2n-m})$, and given that the intersection is over ideals that decrease as $n$ increases  the above is equal to
\begin{align*}
\bigcap_{n \ge 1} (t^{2n}+at^{2n+1}):(t^{2n-m})^*
&=\bigcap_{n \ge 1} (t^{2n}+at^{2n+1}):(t^{2n-m},t^{2n-m+1}) \\
&=\bigcap_{n \ge m} (t^{2n}+at^{2n+1}):(t^{2n-m},t^{2n-m+1})\\
&=\bigcap_{n \ge m} (t^{m+2},t^{m+3})=(t^{m+2},t^{m+3}),
\end{align*}
which agrees with the the computations in \cite{Va-inthull}.

We similarly compute $(t^m,t^{m+1})_*$ with $J_n=(t^{2n})$.
\begin{align*}
(t^m,t^{m+1})_* &=\bigcap_{n \ge 1} J_n:(J_n:(t^m,t^{m+1}))^*\\
&= \bigcap_{n \ge 1} (t^{2n}):((t^{2n}):(t^m,t^{m+1}))^* \\
&= \bigcap_{n \ge 1} (t^{2n}):(t^{2n-m+2},t^{2n-m+3}) \\
&= \bigcap_{n \ge 1} (t^{m},t^{m+1}) = (t^m, t^{m+1}).\end{align*}

Next we compute the $*\hull$ of the ideals $(t^{m+2},t^{m+3})$ for $m \ge 2$.  Observing the lattice of ideals and using the computations above, $(t^m+at^{m+1})_*=(t^{m+2},t^{m+3})$ for all $a \in k$. 
Hence $(t^m+at^{m+1})$ is a maximal $*$-expansion of $(t^{m+2},t^{m+3})$ since $(t^m,t^{m+1})_*=(t^m,t^{m+1})$.  This implies that
\[*\hull(t^{m+2},t^{m+3})=\sum\limits_{a \in k} (t^m+at^{m+1})=(t^{m},t^{m+1})\] for $m \geq 2$.  As the test ideal of $R$ is $(t^2,t^3)$, Proposition \ref{prop:starhull} implies that the $*\hull(t^2,t^3)=R$.
Note that $*\hull(t^3,t^4)=(t^3,t^4)$ and  $*\hull(t^m+at^{m+1})=(t^m+at^{m+1})$ for $m \geq 2$ since any ideal lying directly above these ideals in the lattice has a different $*$-interior.

\end{example}

\begin{example}
Let $k$ be an $F$-finite field of characteristic $p>5$ and $R=k[[t^3,t^4,t^5]]$ with maximal ideal  $\m=(t^3,t^4,t^5)$.  Since $R$ is an isolated singularity, Theorem~\ref{thm:LScases} guarantees that Artinistic and ordinary tight interior of ideals agree in $R$. We use Theorem \ref{thm:finintideal} to compute the $*$-interior of principal ideals of $R$ and then compute the $*$-hulls of certain ideals of $R$. 

Unlike in the previous example, $R$ is not Gorenstein, but $\omega\cong (t^4,t^5)$ and taking $w=t^4$ in Hochster's construction \cite[Page 177]{HoFNDTC}, we have
\begin{align*}R/J_n &\cong (t^4)/((t^{3n}+at^{3n+1}+bt^{3n+2})(t^4,t^5) \cap (t^4) \\
&\cong (t^4)/(t^{3n+4}+at^{3n+5}+bt^{3n+5},t^{3n+5}+at^{3n+6}+bt^{3n+6})\\
\end{align*}
Thus $J_n=(t^{3n}+at^{3n+1}+bt^{3n+2},t^{3n+1}+at^{3n+2}+bt^{3n+3})$ are irreducible ideals cofinal with powers of the maximal ideal. By Theorem \ref{thm:finintideal}, this implies that for any ideal $I$ of $R$, the tight interior of $I$ is equal to 
\[\bigcap_{n \ge 1} J_n:(J_n:I)^*_R.\] 
Since $R$ is a 1-dimensional domain with infinite residue field, it is known \cite[Example 1.6.2]{HuTC} that  $(t^m,t^{m+1},t^{m+2})=(t^m)^-=(t^{m})^*=(t^m,t^{m+1})^*$ for $m \geq 3$. 
Using Theorem \ref{thm:finintideal},
\[
(t^m+at^{m+1}+bt^{m+2})_* =\bigcap_{n \ge 1} J_n:(J_n:(t^m+at^{m+1}+bt^{m+2}))^*.\\
\]
We note that 
\begin{align*}
   J_n:(t^m,t^{m+1},t^{m+2}) & = (J_n:t^m) \cap (J_n:t^{m+1}) \cap (J_n:t^{m+2})\\
   &=J_{n-m} \cap J_{n-m-1} \cap J_{n-m-2}=(t^{n-m+3},t^{n-m+4},t^{n-m+5}).\\
\end{align*}
For $3 \leq m<3n-2$, \[J_n:(t^m+at^{m+1}+bt^{m+2})=(t^{3n-m},t^{3n-m+1}),\] and given that the intersection is over ideals that decrease as $n$ increases  the above is equal to
\begin{align*}
\bigcap_{n \ge 1} J_n:(t^{3n-m},t^{3n-m+1})^*
&=\bigcap_{n \ge 1} J_n:(t^{3n-m},t^{3n-m+1},t^{3n-m+2})\\
&=\bigcap_{n \ge 1} (t^{m+3},t^{m+4}, t^{m+5})=(t^{m+3},t^{m+4},t^{m+5}).
\end{align*}
Thus, $(t^m+at^{m+1}+bt^{m+2})_*=(t^{m+3},t^{m+5},t^{m+6})$ which agrees with the the computations in \cite{Va-inthull}.

Similarly we can compute the $*$-interior of $(t^{m},t^{m+1},t^{m+2})$ for $m \geq 3$, using $J_n=(t^{3n},t^{3n+1})$.
\begin{align*}
(t^m,t^{m+1},t^{m+2})_*&=\bigcap_{n \ge 1} J_n:(J_n:(t^m, t^{m+1},t^{m+2}))^*\\
&=\bigcap_{n \ge 1} J_n:(t^{3n-m+3},t^{3n-m+4},t^{3n-m+5})\\
&= \bigcap_{n \ge 1} (t^{m},t^{m+1},t^{m+2})\\
&=(t^{m},t^{m+1},t^{m+2}).\\
\end{align*}

The nonzero, non-unital ideals of $R$ are of the form $(t^m,t^{m+1},t^{m+2})$, generated by two binomials whose degrees differ by at most 2, or $(t^m+at^{m+1}+bt^{m+2})$ where $m \ge 3$ and $a,b \in k$.  Hence $(t^m+at^{m+1}+bt^{m+2})$ is a $*$-expansion of $(t^{m+3},t^{m+4},t^{m+5})$.
Note that $(t^m,t^{m+1},t^{m+2}) \supseteq (t^m+at^{m+1}+bt^{m+2})$, but $(t^m,t^{m+1},t^{m+2})_*=(t^m,t^{m+1},t^{m+2})$. A maximal $*$-expansion of $(t^{m+3},t^{m+4},t^{m+5})$ is at most an ideal $I$ generated by two binomials satisfying \[(t^m+at^{m+1}+bt^{m+2}) \subseteq I \subseteq (t^m,t^{m+1},t^{m+2}).\] We have
\[\sum\limits_{a,b \in k} (t^m+at^{m+1}+bt^{k+2})=(t^{m},t^{m+1},t^{m+2}),\]
so summing over such $I$,
\[\sum\limits_{I}I=(t^m,t^{m+1},t^{m+2}),\]
which implies \[*\hull(t^{m+3},t^{m+4},t^{m+5})=(t^m,t^{m+1},t^{m+2}).\]
\end{example}

\begin{example}
Suppose $k$ is an $F$-finite field of characteristic $p>0$.  Let $R=k[[x,y]]/(xy)$.  The nonzero, non-unital ideals in $R$ are of the form $(x^n), (y^m)$, $(x^n+ay^m)$ for some nonzero $a \in k$, or $(x^n,y^m)$.  Note that for various choices of positive gradings of $x$ and $y$ in $k[x,y]$, each of these ideals is extended from a homogeneous ideal of $k[x,y]$.  Hence, by Theorem~\ref{thm:LScases}(\ref{it:graded}), the tight interior and the Artinistic tight interior of any ideal are the same, so this example could also be computed using Theorem~\ref{thm:finintideal}.

We compute the $*\hull$s of the ideals $(x^n,y^m)$.
Note that $(x^n)^*=(x^n)=(x^n)_*$, $(y^m)^*=(y^m)=(y^m)_*.$ and  $(x^n, y^m)^*=(x^n,y^m)=(x^n,y^m)_*$ by \cite[Theorem 1.3(c)]{HuTC} and \cite[Proposition 2.8]{nmeSc-tint}. 
However, 
\[(x^n+ay^m)^*=(x^n,y^m) \text{ and } (x^n+ay^m)_*=(x^{n+1},y^{m+1})\] again by \cite[Theorem 1.3(c)]{HuTC} and \cite[Proposition 2.8]{nmeSc-tint}.  
So $(x^n+ay^m)$ is a $*$-expansion of $(x^{n+1},y^{m+1})$ for all nonzero $a \in k$. Part of the lattice of ideals for $k[[x,y]]/(xy)$ includes:
$$\xymatrix{ & (x^n,y^m) \ar @{-} [dl] \ar @{-} [d] \ar @{-} [dr] &\\
            (x^n,y^{m+1})\ar @{-} [dr] &\boxed{(x^n+ay^m)}\ar @{-} [d] & (x^{n+1},y^m) \ar @{-} [dl]\\
             & (x^{n+1},y^{m+1}) & \\}$$
where the boxed node contains $|k|-1$ incomparable ideals.

In fact, because  $(x^n,y^m)_*=(x^n,y^m)$, we see that the ideals $(x^n+ay^m)$ are maximal $*$-expansions of $(x^{n+1},y^{m+1})$. 
Thus, \[*\hull (x^{n+1},y^{m+1})=\sum\limits_{a \in k\backslash \{0\}} (x^n+ay^m) =(x^{n},y^{m}).\]
\end{example}

\begin{example}
Let $R=k[[x,y,z]]/(x^3+y^3+z^3)$, where $k$  is an $F$-finite field of characteristic $p>3$. 
The goal of this example is to show that the $F$-interior and $F$-hull of an ideal can vary depending on the characteristic of $k$.  By Fedder's $F$-purity criterion \cite{Fe-Fpure},
$R$ is $F$-pure if and only if $(x^3+y^3+z^3)^{p-1} \notin \m^{[p]}$ which is true if and only if $p \equiv 1 \text{ mod } 3$. 

First we will compute the $F$-interior of $(y,z)$. 
Note that 
$$(y^s,z^t)^F =\begin{cases} (y^s,z^t) \text{ if } p \equiv 1 \text{ mod } 3 \\
(x^2y^{s-1}z^{t-1},y^s,z^t)=(y^s,z^t)^* \text{ if } p \equiv 2 \text{ mod } 3 \\
\end{cases}$$
by \cite[Theorem 5.21(c)]{HoRo-purity} and \cite[Proposition 1.4]{Moira-Fclos}.

We claim that
$$(y,z)_F =\begin{cases} (y,z) \text{ if } p \equiv 1 \text{ mod } 3 \\
(xy,xz,y^2,yz,z^2) \text{ if } p \equiv 2 \text{ mod } 3. \\
\end{cases}$$

We compute the $F$-interior for the second case above using the methods from Section \ref{sec:finint}. Since $R$ is Gorenstein any system of parameters is irreducible.  Let $J_t=(y^t,z^t)$ and $I=(y,z)$.  Then 
\[J_t:(J_t:I)^F=J_t:(y^t,z^t,(yz)^{t-1})^F.\]

Note that $(y^t,z^t,y^{t-1}z^{t-1})=(y^t, z^{t-1}) \cap (y^{t-1},z^t)$.  
The test ideal of $R$ is the maximal ideal \cite[Proposition 1.4]{Moira-Fclos}.
Now by \cite[Proposition 2.4]{Va-*full}
\begin{align*}
(y^t,z^t, (yz)^{t-1})^* &=(y^t,z^t, (yz)^{t-1}):\m\\
&= (y^t,z^t,(yz)^{t-1},x^2y^{t-1}z^{t-2},x^2y^{t-2}z^{t-1}).\\
\end{align*}

When $p \equiv 2 \text{ mod } 3$, $(y^t,z^t, (yz)^{t-1})^F=(y^t,z^t, (yz)^{t-1})^*$ by \cite[Propostion 2.1]{Moira-Fclos}.
Hence, 
\begin{align*}(y,z)_F&=J_t:(y^t,z^t,(yz)^{t-1})^F\\
&=J_t:(y^t,z^t,(yz)^{t-1},x^2y^{t-1}z^{t-2},x^2y^{t-2}z^{t-1})\\
&=(xy,xz,y^2,yz,z^2)\\
\end{align*}
when $p \equiv 2 \text{ mod } 3.$

Next we compute the tight interior of the parameter ideal $(y+x^2,z)$ for $p \equiv 2 \text{ mod } 3$, with the goal of finding its Frobenius interior. For this, note first that since $R$ is an isolated singularity (as follows easily from the Jacobian criterion on the uncompleted affine ring, $k[x,y,z]/(x^3+y^3+z^3)$) 
Theorem~\ref{thm:LScases}(\ref{it:isosing}) guarantees that for any ideal $I$, the tight interior and the Artinistic tight interior of $I$ coincide. Using the same argument above with $J_t=((y+x^2)^t,z^t)$ and $I=(y+x^2,z)$ we obtain 
\begin{align*}(y&+x^2,z)_*=J_t:((y+x^2)^t,z^t,(y+x^2)^{t-1}z^{t-1})^*\\
&=J_t:((y+x^2)^t,z^t,(y+x^2)^{t-1}z^{t-1},x^2(y+x^2)^{t-1}z^{t-2},x^2(y+x^2)^{t-2}z^{t-1}))\\
&=(x(y+x^2),xz,(y+x^2)^2,(y+x^2)z,z^2)
\end{align*}
where the second equality is by \cite[Proposition 2.4]{Va-*full}.
Clearly $yz \in (y+x^2,z)_*$.  Note that $x^3+z^3=-y^3$.  
We will write $y^2$ times a unit as an element of $(y+x^2,z)_*=(x(y+x^2), xz, (y+x^2)^2, z(y+x^2), z^2)$.  First we will take a combination of $(y+x^2)^2, x(y+x^2)$ and $z^2$ and simplify algebraically.
\begin{align*}(y+x^2)^2-2x^2(y+x^2)-xz^3&=y^2+2x^2y+x^4-2x^2y-2x^4-xz^3\\
&=y^2-x^4-xz^3=y^2+xy^3\\
&=y^2(1+xy).
\end{align*}
Since $1+xy$ is a unit in $R$, $y^2$ and hence $xy$ are in $(y+x^2,z)_*$.  Thus $(y+x^2,z)_*=(xy,xz,y^2,yz,z^2).$

Note now that $(y,z)$ and $(y+x^2,z)$ are both $*$-expansions of the ideal $(xy,xz,y^2,yz,z^2)$, hence 
\[(x^2,y,z)=(y,z)+(y+x^2,z) \subseteq *\hull(xy,xz,y^2,yz,z^2).\]  

We will show that $x \notin *\hull (xy,xz,y^2,yz,z^2)$.  First, we compute the tight closures of the ideals $(y-cx,z)$ with $c^3 \neq -1$. (A similar argument can be used to compute the tight closure of the ideals $(y,z-dx)$ with $d^3 \neq -1$ or $(y-cx,z-dx)$ with $c^3+d^3 \neq -1$.) As above, we can also compute the tight closure of $(y^t,(z-cx)^t,y^{t-1}(z-cx)^{t-1})$ using \cite[Proposition 2.4]{Va-*full}

Note first that $(y-cx)(y^2+cxy+c^2x^2)+z^3=y^3-c^3x^3+z^3=(-1-c^3)x^3$  
Since $c^3 \neq -1$, $-c^3-1$ is a unit, and we can see that $x^2 \in (y-cx,z): \m$.  As a consequence, since the socle of $R/(y-cx,z)$ is generated by one element, $(y-cx,z)^*=(y-cx,z):\m=(x^2,y-cx,z)$. 
By \cite[Proposition 5.2]{Moira-Fclos}, $x^2 \in (y-cx,z)^F$ and $(y-cx,z)^F=(y-cx,z)^*$.  

As above, we compute the tight interior of $(y-cx,z)$. Let  $J_t=((y-cx)^t,z^t)$ and $I=(y-cx,z)$. Then
\begin{align*}(y&-cx,z)_*=J_t:((y-cx)^t,z^t,(y-cx)^{t-1}z^{t-1})^*\\
&=J_t:((y-cx)^t,z^t,(y-cx)^{t-1}z^{t-1},x^2(y-cx)^{t-1}z^{t-2},x^2(y-cx)^{t-2}z^{t-1})\\
&=(x(y-cx),xz,(y-cx)^2,(y-cx)z,z^2)
\end{align*}
where the second equality is by \cite[Proposition 2.4]{Va-*full}.

Although $yz \in (y-cx,z)_*$, $xy,y^2 \notin (y-cx,z)_*$. 
Thus $(y-cx,z)$ is not a $*$-extension nor a $F$-extension of $(xy,xz,y^2,yz,z^2)$.  Similarly $(y,z-dx)$ and $(y-cx,z-dx)$ are not $*$-extensions nor $F$-extensions of $(xy,xz,y^2,yz,z^2).$  

Note that \[(y,z) \subseteq F\hull(xy,xz,y^2,yz,z^2) \subseteq *\hull(xy,xz,y^2,yz,z^2) =(x^2,y,z)\] for $p \equiv 2 \text{ mod } 3$. 
McDermott \cite[Theorem 3.5]{Moira-Fclos} proved that for an $\m$-primary ideal $I$, if $I^F \neq I^*$, then there is a $\mathbb{Z}_3$-graded module $M$ and irreducible submodule $N$ of $M$ with $N^* \neq N^F$. Hence it is not known that the tight closure and Frobenius closure agree on all ideals when $p \equiv 2 \mod{3}$. 
So when $p \equiv 2 \text{ mod } 3$, if $((y+x^2)^t,z^t)^F=((y+x^2)^t,z^t)^*$ for $t>>0$
 then $F\hull(xy,xz,y^2,yz,z^2)=(x^2,y,z)$.  
 
 When the characteristic is $p \equiv 1 \text{ mod } 3$, $R$ is $F$-pure and  \[F\hull(xy,xz,y^2,yz,z^2)=(xy,xz,y^2,yz,z^2).\]  Even if $((y+cx^2)^t,z^t)^F\neq ((y+cx^2)^t,z^t)^*$ for some $c \neq 0$, the $F$-hull of $(xy,xz,y^2,yz, z^2)$ will depend on whether the characteristic is congruent to 1 or $2 \mod 3$.
\end{example}

\section*{Acknowledgments}

The authors would like to thank Eleonore Faber for suggestions of rings with known MCM modules and Jooyoun Hong and Craig Huneke for discussions of conormal modules. 
We are grateful to Karl Schwede for comments that improved the presentation of the paper. We also gratefully acknowledge the AMS-Simons Travel Grant program; the grant awarded to Rebecca R.G. subsidized the travel of Janet Vassilev to George Mason University where this collaboration commenced.

\providecommand{\bysame}{\leavevmode\hbox to3em{\hrulefill}\thinspace}
\providecommand{\MR}{\relax\ifhmode\unskip\space\fi MR }
\providecommand{\MRhref}[2]{%
  \href{http://www.ams.org/mathscinet-getitem?mr=#1}{#2}
}
\providecommand{\href}[2]{#2}


\begin{thebibliography}{HRR02}

\bibitem[BH97]{BH}
Winfried Bruns and J{\"u}rgen Herzog, \emph{{Cohen}-{Macaulay} rings}, revised
  ed., Cambridge Studies in Advanced Mathematics, no.~39, Cambridge Univ.
  Press, Cambridge, 1997.

\bibitem[CPU01]{CPU-strcore}
Alberto Corso, Claudia Polini, and Bernd Ulrich, \emph{The structure of core of
  ideals}, Math. Ann. \textbf{321} (2001), 89--105.

\bibitem[CPU02]{CPU-coreres}
\bysame, \emph{Core and residual intersections of ideals}, Trans. Amer. Math.
  Soc. \textbf{354} (2002), no.~7, 2579--2594.

\bibitem[CPU03]{CPU-corpd1}
\bysame, \emph{Core of projective dimension one modules}, Mann. Math.
  \textbf{321} (2003), 427--433.

\bibitem[EH08]{EneHo-structure}
Florian Enescu and Melvin Hochster, \emph{The {F}robenius structure of local
  cohomology}, Algebra Number Theory \textbf{2} (2008), no.~7, 721--754.

\bibitem[EHU03]{EHU-Ralg}
David Eisenbud, Craig Huneke, and Bernd Ulrich, \emph{What is the {Rees}
  algebra of a module?}, Proc. Amer. Math. Soc. \textbf{131} (2003), no.~3,
  701--708.

\bibitem[Eps05]{nme*spread}
Neil Epstein, \emph{A tight closure analogue of analytic spread}, Math. Proc.
  Cambridge Philos. Soc. \textbf{139} (2005), no.~2, 371--383.

\bibitem[Eps10]{nme-sp}
\bysame, \emph{Reductions and special parts of closures}, J. Algebra
  \textbf{323} (2010), no.~8, 2209--2225.

\bibitem[Eps12]{nme-guide2}
\bysame, \emph{A guide to closure operations in commutative algebra}, Progress
  in Commutative Algebra 2 (Berlin/Boston) (Christopher Francisco, Lee
  Klingler, Sean Sather-Wagstaff, and Janet~C. Vassilev, eds.), De Gruyter
  Proceedings in Mathematics, De Gruyter, 2012, pp.~1--37.

\bibitem[ER19]{nmeRG-cidual}
Neil Epstein and Rebecca R.G., \emph{Closure-interior duality over complete
  local rings}, {arXiv}:1909.05739, preprint, 2019.

\bibitem[ES14]{nmeSc-tint}
Neil Epstein and Karl Schwede, \emph{A dual to tight closure theory}, Nagoya
  Math. J. \textbf{213} (2014), 41--75.

\bibitem[EU]{nmeUlr-lint}
Neil Epstein and Bernd Ulrich, \emph{Liftable integral closure},
  arXiv:1309.6966 [math.AC], to appear in J. Commut. Algebra.

\bibitem[Fed83]{Fe-Fpure}
Richard Fedder, \emph{F-purity and rational singularity}, Trans. Amer. Math.
  Soc. \textbf{278} (1983), no.~2, 461--480.

\bibitem[Fou08]{Fou-corchar}
Louiza Fouli, \emph{Computing the core of ideals in arbitrary characteristic},
  J. Algebra \textbf{319} (2008), no.~7, 2855--2867.

\bibitem[FPU08]{FPU-corechar}
Louiza Fouli, Claudia Polini, and Bernd Ulrich, \emph{The core of ideals in
  arbitrary characteristic}, Michigan Math. J. \textbf{57} (2008), 305--319.

\bibitem[FV10]{FoVa-core}
Louiza Fouli and Janet~C. Vassilev, \emph{The $cl$-core of an ideal}, Math.
  Proc. Cambridge Philos. Soc. \textbf{149} (2010), no.~2, 247--262.

\bibitem[FVV11]{FoVaVr-*core}
Louiza Fouli, Janet~C. Vassilev, and Adela Vraciu, \emph{A formula for the
  ${*}$-core of an ideal}, Proc. Amer. Math. Soc. \textbf{139} (2011), no.~12,
  4235--4245.

\bibitem[FW89]{FeWa-Freg}
Richard Fedder and Keiichi Watanabe, \emph{A characterization of $f$-regularity
  in terms of $f$-purity}, Commutative algebra ({B}erkeley, {CA}, 1987) (New
  York), Math. Sci. Res. Inst. Publ., vol.~15, Springer, 1989, pp.~215--227.

\bibitem[HH90]{HHmain}
Melvin Hochster and Craig Huneke, \emph{Tight closure, invariant theory, and
  the {Brian\c{c}on}-{Skoda} theorem}, J. Amer. Math. Soc. \textbf{3} (1990),
  no.~1, 31--116.

\bibitem[HHS19]{HHS-tracecan}
J\"{u}rgen Herzog, Takayuki Hibi, and Dumitru~I. Stamate, \emph{The trace of
  the canonical module}, Israel J. Math. \textbf{233} (2019), no.~1, 133--165.

\bibitem[Hoc07]{HoFNDTC}
Melvin Hochster, \emph{Foundations of {T}ight {C}losure {T}heory},
  \url{http://www.math.lsa.umich.edu/~hochster/711F07/fndtc.pdf}, 2007.

\bibitem[HR76]{HoRo-purity}
Melvin Hochster and Joel~L. Roberts, \emph{The purity of {F}robenius and local
  cohomology}, Adv. Math. \textbf{21} (1976), no.~2, 117--172.

\bibitem[HRR02]{HRR-bf}
William~J. Heinzer, Louis~J. Ratliff, Jr., and David~E. Rush, \emph{Basically
  full ideals in local rings}, J. Algebra \textbf{250} (2002), no.~1, 371--396.
  \MR{1898390}

\bibitem[HS94]{HuSw-core}
Craig Huneke and Irena Swanson, \emph{Cores of ideals in 2-dimensional regular
  local rings}, Michigan Math. \textbf{42} (1994), no.~1, 193--208.

\bibitem[HS03]{HySmi-Kawcore}
Eero Hyry and Karen Smith, \emph{On a non-vanishing conjecture of kawamata and
  the core of an ideal}, Amer. J. Math. \textbf{125} (2003), no.~6, 1349--1410.

\bibitem[HS04]{HySmi-cvgc}
\bysame, \emph{Core versus graded core, and global sections of line bundles},
  Trans. Amer. Math. Soc. \textbf{356} (2004), no.~8, 3143--3166.

\bibitem[HS06]{HuSw-book}
Craig Huneke and Irena Swanson, \emph{Integral closure of ideals, rings, and
  modules}, London Math. Soc. Lecture Note Ser., vol. 336, Cambridge Univ.
  Press, Cambridge, 2006.

\bibitem[HT05]{HT-core}
Craig Huneke and Ng\^{o}~Vi\^{e}t Trung, \emph{On the core of ideals}, Comp.
  Math. \textbf{141} (2005), no.~1, 1--18.

\bibitem[Hun96]{HuTC}
Craig Huneke, \emph{Tight closure and its applications}, CBMS Reg. Conf. Ser.
  in Math., vol.~88, Amer. Math. Soc., Providence, RI, 1996.

\bibitem[Lam99]{Lam99}
T.~Y. Lam, \emph{Lectures on modules and rings}, Graduate Texts in Mathematics,
  vol. 189, Springer-Verlag, New York, 1999. \MR{1653294}

\bibitem[LS99]{LySmFreg}
Gennady Lyubeznik and Karen~E. Smith, \emph{Strong and weak {$F$}-regularity
  are equivalent for graded rings}, Amer. J. Math. \textbf{121} (1999),
  1279--1290.

\bibitem[LS01]{LySm-Test}
Gennady Lyubeznik and Karen~E. Smith, \emph{On the commutation of the test
  ideal with localization and completion}, Trans. Amer. Math. Soc. \textbf{353}
  (2001), no.~8, 3149--3180. \MR{1828602}

\bibitem[Mat86]{Mats}
Hideyuki Matsumura, \emph{Commutative ring theory}, Cambridge Studies in
  Advanced Mathematics, no.~8, Cambridge Univ. Press, Cambridge, 1986,
  Translated from the {Japanese} by {M.} {Reid}.

\bibitem[McD00]{Moira-Fclos}
Moira McDermott, \emph{Tight closure, plus closure and frobenius closure in
  cubical cones}, Trans. Amer. Math. Soc. \textbf{352} (2000), no.~1, 95--114.

\bibitem[Moh97]{Mo-coremod}
Radha Mohan, \emph{The core of a module over a two-dimensional regular local
  ring}, J. Algebra \textbf{189} (1997), no.~1, 1--22.

\bibitem[PR19]{PeRG}
Felipe P\'{e}rez and Rebecca R.G., \emph{Characteristic-free test ideals},
  {arXiv}:1907.02150, preprint, 2019.

\bibitem[PU05]{PU-core}
Claudia Polini and Bernd Ulrich, \emph{A formula for the core of an ideal},
  Math. Ann. \textbf{331} (2005), 487--583.

\bibitem[R.G16]{RG-bCMsing}
Rebecca R.G., \emph{Closure operations that induce big {C}ohen-{M}acaulay
  modules and classification of singularities}, J. Algebra \textbf{467} (2016),
  237--267.

\bibitem[RS88]{ReSa-core}
David Rees and Judith~D. Sally, \emph{General elements and joint reductions},
  Michigan Math. J. \textbf{35} (1988), no.~2, 241--254.

\bibitem[V{\'{a}}m68]{Vam}
Peter V{\'{a}}mos, \emph{The dual of the notion of ``finitely generated''}, J.
  London Math. Soc. \textbf{43} (1968), 643--646.

\bibitem[Vas14]{Va-*full}
Janet Vassilev, \emph{$\mathfrak{m}$-full and basically full ideals in rings of
  characteristic $p$}, Rocky Mountain J. Math. \textbf{44} (2014), no.~2,
  691--704.

\bibitem[Vas20]{Va-inthull}
\bysame, \emph{*-interiors and related ideals}, In Progress, 2020.

\end{thebibliography}
\end{document}